\newenvironment{enumeratei}{\begin{enumerate}[\upshape (i)]}{\end{enumerate}}
\newenvironment{enumeratea}{\begin{enumerate}[\upshape (a)]}{\end{enumerate}}
\newenvironment{inparaenuma}{\begin{inparaenum}[\upshape \bfseries (a) ]}{\end{inparaenum}}
\numberwithin{equation}{section}
\numberwithin{figure}{section}
\numberwithin{table}{section}
\newtheorem{thm}{Theorem}[section]
\newtheorem{theorem}[thm]{Theorem}
\newtheorem{corollary}[thm]{Corollary}
\newtheorem{prop}[thm]{Proposition}
\newtheorem{defn}[thm]{Definition}
\newtheorem{ass}[thm]{Assumption}
\newtheorem{conj}[thm]{Conjecture}
\newtheorem{lemma}[thm]{Lemma}
\theoremstyle{definition}
\newtheorem{rem}{Remark}
\newcommand{\ind}{\mathds{1}}
\newcommand{\eps}{\varepsilon}
\newcommand{\set}[1]{\left\{#1\right\}}
\newcommand{\probc}{\stackrel{\mathrm{P}}{\longrightarrow}}
\newcommand{\convas}{\stackrel{\mathrm{a.s.}}{\longrightarrow}}
\def\qed{ \hfill $\blacksquare$}
\newcommand{\cA}{\mathcal{A}}\newcommand{\cB}{\mathcal{B}}
\newcommand{\cE}{\mathcal{E}}
\newcommand{\cG}{\mathcal{G}}\newcommand{\cH}{\mathcal{H}}\newcommand{\cI}{\mathcal{I}}
\newcommand{\cL}{\mathcal{L}}
\newcommand{\cM}{\mathcal{M}}\newcommand{\cN}{\mathcal{N}}
\newcommand{\cP}{\mathcal{P}}\newcommand{\cR}{\mathcal{R}}
\newcommand{\cS}{\mathcal{S}}\newcommand{\cT}{\mathcal{T}}
\newcommand{\cV}{\mathcal{V}}
\newcommand{\cY}{\mathcal{Y}}\newcommand{\cZ}{\mathcal{Z}}
\newcommand{\vA}{\mathbf{A}}
\newcommand{\vM}{\mathbf{M}}\newcommand{\vN}{\mathbf{N}}
\newcommand{\vP}{\mathbf{P}}\newcommand{\vQ}{\mathbf{Q}}
\newcommand{\vS}{\mathbf{S}}
\newcommand{\vW}{\mathbf{W}}
\newcommand{\vY}{\mathbf{Y}}
\newcommand{\vh}{\mathbf{h}}
\newcommand{\vj}{\mathbf{j}}
\newcommand{\vp}{\mathbf{p}}
\newcommand{\vs}{\mathbf{s}}\newcommand{\vt}{\mathbf{t}}
\newcommand{\vw}{\mathbf{w}}
\newcommand{\vy}{\mathbf{y}}
\newcommand{\mvh}{\boldsymbol{h}}
\newcommand{\mvm}{\boldsymbol{m}}
\newcommand{\mveta}{\boldsymbol{\eta}}\newcommand{\mvkappa}{\boldsymbol{\kappa}}
\newcommand{\mvnu}{\boldsymbol{\nu}}
\newcommand{\mvpi}{\boldsymbol{\pi}}
\newcommand{\mvrho}{\boldsymbol{\rho}}
\newcommand{\mvxi}{\boldsymbol{\xi}}
\newcommand{\mvvarpi}{\boldsymbol{\varpi}}
\newcommand{\mvvarphi}{\boldsymbol{\varphi}}\newcommand{\mvPsi}{\boldsymbol{\Psi}}
\newcommand{\fD}{\mathfrak{D}}\newcommand{\fF}{\mathfrak{F}}
\newcommand{\fI}{\mathfrak{I}}
\newcommand{\fK}{\mathfrak{K}}
\newcommand{\fM}{\mathfrak{M}}\newcommand{\fN}{\mathfrak{N}}
\newcommand{\fP}{\mathfrak{P}}\newcommand{\fR}{\mathfrak{R}}
\newcommand{\fT}{\mathfrak{T}}\newcommand{\fU}{\mathfrak{U}}
\newcommand{\fp}{\mathfrak{p}}
\newcommand{\bL}{\mathbb{L}}
\newcommand{\bN}{\mathbb{N}}
\newcommand{\bR}{\mathbb{R}}
\newcommand{\bT}{\mathbb{T}}
\newcommand{\bX}{\mathbb{X}}
\newcommand{\sF}{\mathfrak{F}}
\DeclareMathOperator{\E}{\mathbb{E}}
\DeclareMathOperator{\pr}{\mathbb{P}}
 \DeclareMathOperator{\BP}{BP}
\DeclareMathOperator{\PF}{PF}
\newcommand{\sss}{\scriptscriptstyle}
\newcommand{\convd}{\stackrel{d}{\longrightarrow}}
\definecolor{aqua}{rgb}{0.0, 1.0, 1.0}
\definecolor{boo}{rgb}{1.0, 0.0, 1.0}
\newcommand{\Geom}{{\sf Geom} }
\newcommand{\Bern}{{\sf Bernoulli} }
\newcommand{\Exp}{{\sf Exp} }
\DeclareMathAlphabet{\mathscrbf}{OMS}{mdugm}{b}{n}
\newcommand{\scrP}{\mathscrbf{P}}
\newcommand{\scrU}{\mathscrbf{U}}
\newcommand{\scI}{\mathscr{I}}
\newcommand{\scZ}{\mathscr{Z}}
\newcommand{\sch}{{\mathcalb{h}}}
\tikzset{every path/.style={line width=.07 cm}}
\newcommand{\tikzcircle}[2][blue,fill=blue]{\tikz[baseline=-0.5ex]\draw[#1,radius=#2] (0,0) circle ;}%
\newcommand{\cic}{\tikzcircle{1.5pt}}
\newcommand{\bbT}{\mathbb{T}}
\newcommand{\TT}{\mathcal{T}}
\newcommand{\bs}{\mathbf{s}}
\newcommand{\bt}{\mathbf{t}}
\newcommand{\probfr}{\stackrel{\mbox{$\operatorname{a.s.}$-\bf fr}}{\longrightarrow}}
\newcommand{\probcrf}{\stackrel{\mbox{$\operatorname{a.s.}$-\bf efr}}{\longrightarrow}}
\newcommand{\prob}{\mathbb{P}}
\newcommand{\bfomega}{{\boldsymbol \omega}}
\newcommand{\Zbold}{{\mathbb{Z}}}
\DeclareMathOperator{\PR}{PR}
\newcommand{\fpm}{\mvvarpi}
\DeclareFontFamily{U}{BOONDOX-calo}{\skewchar\font=45 }
\DeclareFontShape{U}{BOONDOX-calo}{m}{n}{
  <-> s*[1.05] BOONDOX-r-calo}{}
\DeclareFontShape{U}{BOONDOX-calo}{b}{n}{
  <-> s*[1.05] BOONDOX-b-calo}{}
\DeclareMathAlphabet{\mathcalb}{U}{BOONDOX-calo}{m}{n}
\SetMathAlphabet{\mathcalb}{bold}{U}{BOONDOX-calo}{b}{n}
\DeclareMathAlphabet{\mathbcalb}{U}{BOONDOX-calo}{b}{n}
\newcommand{\uNS}{{\boldsymbol\fU}}
\newcommand{\dNS}{{\boldsymbol\fD}}
\newcommand{\indNS}{{\boldsymbol {\fI \fD}}}
\newcommand{\prNS}{{\boldsymbol {\fP\fR}}_c}
\newcommand{\prfNS}{{\boldsymbol {\fP\fR}}_M}
\newcommand{\cg}{\color{black}}
\begin{document}

\begin{frontmatter}
\title{Attribute network models, stochastic approximation, and network sampling}
\runtitle{Attribute networks, stochastic approximation and sampling}

\begin{aug}
%%%%%%%%%%%%%%%%%%%%%%%%%%%%%%%%%%%%%%%%%%%%%%%
%% Only one address is permitted per author. %%
%% Only division, organization and e-mail is %%
%% included in the address.                  %%
%% Additional information can be included in %%
%% the Acknowledgments section if necessary. %%
%% ORCID can be inserted by command:         %%
%% \orcid{0000-0000-0000-0000}               %%
%%%%%%%%%%%%%%%%%%%%%%%%%%%%%%%%%%%%%%%%%%%%%%%
\author[A]{\fnms{Nelson} \snm{Antunes}\ead[label=e1]{nantunes@ualg.pt}},
\author[B]{\fnms{Sayan} \snm{Banerjee}\ead[label=e2]{banerjee@email.unc.edu}},
\author[B]{\fnms{Shankar} \snm{Bhamidi}\ead[label=e3]{bhamidi@email.unc.edu}}
\and
\author[B]{\fnms{Vladas} \snm{Pipiras}\ead[label=e4]{pipiras@email.unc.edu}}
%%%%%%%%%%%%%%%%%%%%%%%%%%%%%%%%%%%%%%%%%%%%%%
%% Addresses                                %%
%%%%%%%%%%%%%%%%%%%%%%%%%%%%%%%%%%%%%%%%%%%%%%
\address[A]{Center for Computational and Stochastic Mathematics,University of Lisbon, Avenida Rovisco Pais, Lisbon, Portugal, 1049-001\printead[presep={,\ }]{e1}}

\address[B]{Department of Statistics and Operations Research, 304 Hanes Hall, University of North Carolina, Chapel Hill, NC 27599\printead[presep={,\ }]{e2,e3,e4}}
\end{aug}

\begin{abstract}
Motivated by the central role of social networks in the diffusion of information, the study of network valued data where nodes and/or edges have attributes, which modulate the dynamics of both network evolution, and information flow on the network itself, has witnessed significant research interest across multiple disciplines. A key ingredient of this general area comprises probabilistic network models that incorporate (a) heterogeneity in edge creation across different attribute groups; (b) temporal network evolution and (c) popularity bias. Such models are then used to understand a host of domain specific questions, including bias in network sampling, PageRank and degree centrality scores and their impact in network ranking and recommendation algorithms. Despite significant interest, for these network models,  the main network functional amenable to analysis has so far been degree distribution asymptotics. 

In this paper, we analyze dynamic random network models where younger vertices connect to older ones with probabilities proportional to their degrees as well as a propensity kernel governed by their attribute types. Using stochastic approximation techniques we show that, in the large network limit, such networks converge in the local weak sense to {\cg limiting infinite random trees with an explicit description in terms of randomly stopped multi-type branching processes. This allows for the derivation of asymptotics for a wide class of network functionals implying, for example, that while degree distribution tail exponents depend on the attribute type (already derived by \cite{jordan2013geometric}), PageRank centrality scores have the \emph{same} tail exponent across attributes.} The limit results also give explicit formulae for the performance of various network sampling mechanisms. One surprising consequence is the efficacy of PageRank and walk based network sampling schemes for directed networks in the setting of rare minorities. 
% The results also allow one to evaluate the impact of various proposed mechanisms to increase degree centrality of minority attributes in the network, and to quantify the bias in inferring about the network from an observed sample. Further, we formalize the notion of resolvability of such models where, owing to propagation of chaos type phenomena in the evolution dynamics for such models, one can set up a correspondence to models driven by continuous time branching process dynamics. 
\end{abstract}

\begin{keyword}[class=MSC]
\kwd[Primary ]{60C05}
\kwd{05C80}
\end{keyword}

\begin{keyword}
\kwd{attributed networks}
\kwd{bias of network sampling}
\kwd{degree centrality}
\kwd{PageRank centrality}
\kwd{network sampling}
\kwd{continuous time branching processes}
\kwd{temporal networks}
\kwd{stochastic approximation}
\kwd{stable age distribution theory}
\kwd{local weak convergence}
\end{keyword}

\end{frontmatter}

\section{Introduction}
\label{sec:int}
% $\cU$ $\mathscr{U}$ $\mathscrbf{U}$ $\scrP$ $\scrW$ $\scrU$
Attributed networks, namely graphs in which nodes and/or edges have attributes, are at the center of network-valued datasets in many modern applications.  Areas such as network representation learning \cite{Fan:2021} aim to obtain low-dimensional embeddings for nodes through local explorations such as random walks, taking into account network topology and network attributes, for subsequent use in machine learning pipelines such as clustering \cite{Chang:2019} and  classification \cite{Lee:2017}. 

In a different direction, in settings where attributes represent social characteristics,  there is now significant interest in understanding fairness questions related to the positions of individuals in the network, where connections are regulated by multiple factors including: \begin{inparaenuma}
	\item inherent heterogeneity in connection between and across groups;
	\item time and path dependent nature of connections;
	\item popularity bias, i.e. the inherent tendency {\cg to} be attracted to popular individuals. 
\end{inparaenuma}  
The corresponding emergent networks modulate the diffusion of information within a network, effecting the availability and timeliness of information to minority groups \cite{jadidi2018gender}. In addition to the direct influence of networks on propagation of information, these networks are fed into algorithmic pipelines such as network sampling algorithms, leading to a potentially distorted view of the network \cite{Wagner:2017,espin2018towards,antunes2023learning}. The networks are further used for ranking individuals according to their centrality scores (measured via functionals such as degree or PageRank scores) which further exacerbate inequalities in the network \cite{espin2022inequality}, or effect the perceptions of minorities within the network \cite{lee2019homophily}. In this rapidly burgeoning field,  network models play a major role in obtaining insight into both fundamental drivers of network evolution, as well as phenomenon of co-evolution of the dynamics of the network with algorithms such as recommendation systems that use the underlying network structure to modify the network, thus impacting future evolution;  see \cite{karimi2022minorities}, {\cg a nice recent survey for relevant literature from applications}. The goal of this paper is a rigorous evaluation of the properties of these models and their alignment with the needs of domain scientists.

Let us start with the general class of models, {\cg belonging to the preferential attachment family of networks \cite{barabasi1999emergence},} underpinning a host of recent studies, and subsequently describe specific examples in this class.   Fix a latent space (referred to as attribute space) $\cS$ with associated $\sigma$-field $\cB(\cS)$ so that $(\cS, \cB(\cS))$ is a measurable space; in many but not all cases, this is a metric space. While this paper will only deal with the finite type space, the general setting below allows us to describe work in progress extending the findings in this paper to more general models.  Fix a probability measure $\mvpi$ on this measurable space and a measurable (potentially asymmetric) function $\kappa: \cS\times \cS\to \bR_+$; intuitively this function measures propensities of pairs of nodes to connect, based on their attributes. Fix a preferential attachment parameter $\gamma \in [0,1]$ and an out-degree function $\mvm: \cS \rightarrow \mathbb{N}$ which modulates the number of edges that a new vertex possesses (in particular the dependence of this on the attribute type) when it enters the system. At time $n=0$, initialize from a base connected graph $\tilde{\cG}_0$ where every vertex has an attribute in $\cS$. {\cg The next definition describes the dynamics of a sequence of growing random networks $\set{\tilde \cG_s:s\geq 0}$ from initial state $\tilde \cG_0$}.

\begin{defn}[Attributed evolving network model class $\scrP$]
	\label{def:attrib-evol}
	 %Fix preferential attachment parameter $\gamma \in [0,1]$.   
	  Vertices enter the system sequentially at discrete times $n\geq 1$  starting with a base connected graph $\tilde \cG_0$ at time $n=0$. Having constructed $\set{\tilde \cG_s: 0\leq s\leq n}$, write $v_{n+1}$ for the vertex that enters at time $n+1$ and $a(v_{n+1})$ for the corresponding attribute;  every such vertex $v_{n+1}$ has attribute distribution 
		\begin{equation}
			\label{eqn:726}
			a(v_{n+1}) \sim \mvpi, \qquad \text{ independent of } \set{\tilde \cG_s: 0\leq s \leq n}. 
		\end{equation}
		 % $a(v_t) \in \cA$ generated independently using $\pi$.
	 The dynamics of construction are recursively defined as: for any $n$ and $v\in \tilde \cG_n$, let $\deg(v,n)$ denote the degree of $v$ at time $n$ \textcolor{black}{(if $\tilde \cG_0 = \{v_0\}$, initialize $\deg(v_0,0)=1$)}. For $n \ge 0$, $v_{n+1}$ attaches to the network via $\mvm(a(v_{n+1}))$ outgoing edges. Each edge independently chooses an existing vertex in $\tilde \cG_n$ to attach to, with probabilities (conditionally on $\tilde \cG_n$ and $a(v_{n+1})$) given by: 
\begin{equation}
\label{eqn:912old}
	\pr(v_{n+1} \leadsto v \,| \,\tilde \cG_n, a(v_{n+1}) = a^{\star}) = \frac{\kappa(a(v), a^\star) [\deg(v,n)]^\gamma}{\sum_{v^\prime \in \tilde \cG_n } \kappa(a(v^\prime), a^\star) [\deg(v^\prime, n)]^\gamma}, \quad v \in \tilde \cG_n.
\end{equation}
Denote this model of evolving random networks by $\scrP(\gamma, \mvpi, \kappa, \mvm)$. 
\end{defn}
{\cg Note that, during the attachment of the vertex $v_{n+1}$ in the above construction, $\deg(v,n)$ remains constant for each $v \in \tilde \cG_n$ during the addition of the $\mvm(a(v_{n+1}))$ outgoing edges.}
When $\mvm \equiv 1$, we will abbreviate as $\scrP(\gamma, \mvpi, \kappa
)$.
The latent (attribute) space $\cS$ will always be clear from context and so the dependence of functionals on $\cS$ is suppressed to ease notation.  Some special cases are described below.
% A simulation with $n=1000$ nodes,  where $\cS = [3]:=\set{1,2,3}$, $\kappa(a,a) = .80$ and $\kappa(a,a^\prime) = .20$ for $a\neq a^\prime$, $\mvpi = (.7,.2,.1)$ and $\mvm \equiv 2$ is given in Figure \ref{fig:sim}. Most of the current state of the art deals with the setting $\gamma =1$ (linear preferential attachment) which will largely be the focus of this paper (our results also extend to $\gamma=0$ as described below). Specific recent examples include:  

% \begin{figure}[htbp]
% \centering
% \begin{minipage}{.5\textwidth}
%   \centering
%   \includegraphics[width=.8\linewidth]{new-model-homo-02.eps}
%   % \captionof{figure}{$\gamma=.2$}
% %\label{fig:test1}
% \end{minipage}%
% \begin{minipage}{.5\textwidth}
%   \centering
%   \includegraphics[width=.8\linewidth]{new-model-homo-1.eps}
%   % \captionof{figure}{$\gamma =1$}
%  %  \label{fig:test2}
% \end{minipage}
% \caption{Left: $\gamma = .2$. Right $\gamma=1$}
% \label{fig:sim}
% \end{figure}

\begin{enumeratea}
	\item $\kappa\equiv 1$: the regime $\gamma=1$ corresponds to the Barabási–Albert model \cite{barabasi1999emergence} while $0<\gamma <1$ corresponds to sublinear preferential attachment \cite{krapivsky2001organization}. Here there is no dependence on type in the evolution of connectivity in the model. 
	\item $\cS = \bR_+$, $\kappa(a,a^\prime) =a$ and $\gamma=1$: here new attachment is driven only by the attribute of existing vertices and is often called preferential attachment with multiplicative fitness \cite{bianconi2001bose}. 
	\item $\cS = \set{a,b}$ so that $\kappa$ is a $2\times 2$ matrix and $\gamma =1$: this model was formulated and studied in \cite{Karimi:2018} to understand (degree centrality based) ranking algorithms when one type is a minority. 
	\item $\cS = [0,1]$, $\gamma=1$ and $\kappa(a,a^\prime) = 1- |a-a^\prime|$ is called the scale free homophilic model \cite{Almeida:2013}. 
	\item $\cS = \cS_2$ namely the surface of the three dimensional unit sphere in $\bR^3$ with associated distance $d(\cdot,\cdot)$, $\kappa(a,a^\prime) = F(d(a,a^\prime))$ namely an appropriate function of the corresponding distance on $\cS_2$ and $\gamma =1$ was termed geometric preferential attachment \cite{flaxman2007geometric}; a more general version of this model, still with $\gamma =1$, was studied in \cite{jordan2013geometric} and was particularly influential for this current paper. 
	\item The setting with two attributes and symmetric kernel $\kappa(a,b) = \rho + (1-\rho)\ind\set{a = b}$, for parameter $\rho \in [0,1]$, was studied in \cite{avin2015homophily} under the name of `biased preferential attachment'. They showed that, in this setting, this model could be interpreted as an acceptance-rejection scheme coupled with the usual (one-attribute) preferential attachment dynamics. More precisely, at each stage, when a new vertex enters the system, it chooses to attach to a vertex with probability proportional to the degree (as in standard preferential attachment), and if the vertex chosen is of the same type as the new vertex, an edge is formed, else with probability $\rho$, the connection is accepted, and with $1-\rho$ the connection is rejected, and this new vertex tries again, till an edge is formed. The main goal there was to study homophily and `glass ceiling' effects that rigorously establish the inequality emerging between the majority and minority attribute individuals under the network dynamics. Among other things, the difference in the exponents of the expected degree distributions for the two attributes was (independently of \cite{jordan2013geometric}) established there (see \cite[Theorem 4.12]{avin2015homophily}). In \cite{stoica2024fairness}, questions on fairness were investigated for the PageRank and `HITS' centrality measures for the biased preferential attachment model.
\end{enumeratea}

% \begin{rem}
% In , a special case of our model for two attributes was studied where an  was implemented to realize a class of kernels $\kappa$.  In \cite{stoica2024fairness}, questions on fairness were investigated for the PageRank and `HITS' centrality measures for the biased preferential attachment model.
% \end{rem}

 \subsection{Motivation for this work and analysis}
 \label{sec:motivation-overview}
 
The above class of models (mostly in the $\gamma \equiv 1$ case) has been used to gain insight in multiple recent applications (see Section \ref{sec:disc} for starting points to this now large body of work). In particular,  researchers have been interested in understanding properties of complicated functionals of the model in terms of its parameters. Such functionals include (definitions given in Section \ref{sec:mod-su}): 
 \begin{enumeratei}
 	\item Global centrality scores such as the PageRank and contrasting scaling exponents between majority and minority nodes \cite{espin2022inequality};
	\item Scaling of maximal degree vertices, since these ``hubs'' play a key role in driving diffusion mechanisms \cite{Karimi:2018};
	\item Homophily measures \cite{shrum1988friendship,mcpherson2001birds,mislove2010you}, namely propensity of vertices to connect to vertices of the same/different ``type'', respectively, and the impact of such tendencies on other functionals, including diffusion of information,  modularity and the emergence of echo-chambers in the network \cite{Karimi:2018,Wagner:2017,espin2018towards}. 
 \end{enumeratei}
 {\cg Due to the complexity of the model, in most of these existing studies, the main functional that has been analyzed,  either through writing down fluid limits \cite{Karimi:2018}, or stochastic approximation techniques \cite{jordan2013geometric}, has  been the degree distribution.  The goal of this paper is to understand the network geometry beyond one-step neighborhoods. Towards this end, we first propose a closely related model that turns out to be more analytically tractable. Since the general setting (with ``continuous'' attribute space) and general $\mvm$ is more technical, let us explain the basic rationale in the simpler discrete setting with $\mvm \equiv 1$ and finite attribute space. Here, if the base graph $\cG_0$ is a tree, then we get a sequence of growing trees. Let $\cS = [K]:=\set{1,2,\ldots, K}$ so that the attribute probability measure $\mvpi$ is just a probability mass function (pmf) on $[K]$. Fix a (potentially, and in most cases different from $\mvpi$) weight measure $\mvnu$ (need not be normalized) on $\cS$ and consider the attributed tree network model $\set{\cG_n: n\geq 0}$ with dynamics:
 \begin{equation}
 	\pr\left(a(v_{n+1}) = a^\star,  v_{n+1} \leadsto v \,| \, \cG_n\right) := \frac{\mvnu(a^\star)\kappa(a(v), a^\star) [\deg(v,n)]^\gamma}{\sum_{a\in \cS}\sum_{v^\prime \in \cG_n } \mvnu(a) \kappa(a(v^\prime), a) [\deg(v^\prime, n)]^\gamma},
	\label{eqn:model-scu}
 \end{equation}
for $a^\star \in \cS, \,v \in \cG_n$. 
%The above model is invariant under scaling in $\mvnu$, and thus it is convenient in the statement of the results below to allow $\mvnu$ to be a general measure instead of normalizing it to be a probability measure.   
We will refer to the above class of models in \eqref{eqn:model-scu} as $\scrU(\gamma, \mvnu, \kappa)$.  Thus, here $\mvnu$ plays the role of a weight and further, unlike the model $\scrP$ where each new arriving vertex has attribute sampled independently from the current state of the network, here the attribute distribution of new vertices is closely dependent on the entire state of the current network:
\begin{equation}
\label{eqn:attr-scu}
\pr(a(v_{n+1}) = a^\star| \cG_n) = \frac{\sum_{v\in \cG_n} \mvnu(a^\star)\kappa(a(v), a^\star) [\deg(v,n)]^\gamma}{\sum_{a\in \cS}\sum_{v^\prime \in \cG_n } \mvnu(a) \kappa(a(v^\prime), a) [\deg(v^\prime, n)]^\gamma}, \qquad a^\star \in [K]
\end{equation}
 %In most cases, the two models with the same driving pmf are very different.  
% At this stage, it is perfectly reasonable for the reader to question the point of this namely, constructing a potentially much more complex model than the one we want to analyze. Three comments are in order to explain this counter-intuitive direction
The purpose of introducing this new model can be explained through the following.
 \begin{enumeratea}
	\item {\bf Tractability of $\scrU$ as contrasted to $\scrP$:} The issue with the dynamics in the original model $\scrP$ is that at each stage, the evolution depends on two factors, the type of the new vertex that is generated autonomously and the current state of the network. This is one of the major reasons why till date, only degree distribution asymptotics for $\scrP$ have been amenable and more complex functionals such as PageRank score asysmptotics have proven intractable.   It turns out there is a specific construction of $\scrU$ (described in the next section) where the evolution is driven {\bf only by existing vertices in the system}; further this construction is via embedding into a (continuous time) branching process which confers significant amount of conditional independence between progenies of existing vertices in the system. Thus analyzing asymptotics of local neighborhoods of $\scrU$ is much more tractable.  This allows for a full description of asymptotics for a wide range of functionals, including degree distribution and PageRank scores, both for the whole network, and for each attribute, allowing one to contrast, for example, the dependence of tail exponents of various network functionals, in the large network limit,  on the attribute type. In particular, write $\hat{\mvpi}_n = n^{-1}\sum_{i=1}^n \delta\set{a(v_i)}$ for the empirical distribution of attribute types; here $\delta$ is the Dirac delta function and we are ignoring attributes of the initial graph $\cG_0$ for simplicity. Then under regularity conditions, there exists a limit deterministic measure $\mvpi_\infty$ such that
	\begin{equation}
	\label{eqn:mvpi-convg}
		\hat{\mvpi}_n \probc {\mvpi}_\infty, 
	\end{equation}
	where the limit ${\mvpi}_\infty ={\mvpi}_\infty(\gamma, \mvnu, \kappa) $ is obviously a functional of the parameters driving the model. 
	\item {\bf Evolution dynamics of $\scrU$:} Next, the specific construction of $\scrU$ in the next section further satisfies for each $n$:
	\begin{equation}
	\label{eqn:913}
		\pr(v_{n+1} \leadsto v \, | \, \cG_n, a(v_{n+1}) = a^{\star}) = \frac{\kappa(a(v), a^\star) [\deg(v,n)]^\gamma}{\sum_{v^\prime \in \cG_n } \kappa(a(v^\prime), a^\star) [\deg(v^\prime, n)]^\gamma}. 
	\end{equation} 
	
	\item {\bf Resolvability and transferring asymptotics from $\scrU$ to $\scrP$: } Now compare \eqref{eqn:mvpi-convg} and \eqref{eqn:913} describing properties of $\scrU$ to \eqref{eqn:726} and \eqref{eqn:912old} describing dynamics of $\scrP$ suggesting that, if one can choose $\mvnu$ for $\scrU$ appropriately so that the limit attribute  distribution for $\scrU$ namely ${\mvpi}_\infty$ is the same $\mvpi$ for model class $\scrP$ then asymptotically the evolutionary dynamics of $\scrU$ are close to the dynamics of $\scrP$.  
	%Recall that, especially in the $\gamma =1$ regime, there are a number of non-rigorous \cite{Karimi:2018} and rigorous results (most comprehensively in \cite{jordan2013geometric}) for degree distribution asymptotics. 
In the $\gamma =1$ regime,	we will show that, in the finite attribute setting, given a model $\scrP(1, \mvpi, \kappa)$, there is a {\bf specific} (and easily computable) choice of $\mvnu = \mvnu(\mvpi, \kappa)$ so that for the corresponding model class $\scrU(1, \mvnu, \kappa)$, ${\mvpi}_\infty(1, \mvnu, \kappa) = \mvpi$. %We call this phenomenon \emph{resolvability}.  
What is perhaps surprising is that the {\bf local weak limit} of $\scrP(1, \mvpi, \kappa)$ is {\bf exactly} the same as $\scrU(1, \mvnu(\mvpi,\kappa), \kappa)$. Thus  asymptotics for the technically challenging $\scrP$ model are closely captured by a mathematically more tractable network model $\scrU$ leading to a general path for deriving rigorous results for the model class $\scrP$ sketched in Figure \ref{fig:label}. 
	\begin{figure}[htbp]
		\centering
		
		\adjustbox{scale=1.4,center}{
			\begin{tikzcd}
			\scrP(\gamma, \mvpi, \kappa) \arrow[r, blue, thick] \arrow[d, swap,  "??", red, dashed, thick] & \mvnu(\gamma, \mvpi, \kappa) \arrow[d, blue, thick] \\   
			\text{Asymptotics}  & \arrow[l, blue, thick]\scrU(\gamma, \mvnu, \kappa)
			\end{tikzcd}
			}
		\caption{Path to derive rigorous asymptotics for resolvable models.}
		\label{fig:label}
	\end{figure}

\end{enumeratea}
}
Paraphrasing the meta-principle in words (eschewing technical assumptions):

\begin{quote}
	\emph{If the choice of the weighting measure $\mvnu$ for the more easily analyzable model $\scrU$ is such that the asymptotic proportions of types matches that of $\scrP$, then asymptotics of a host of functionals of both models match.  }
\end{quote}

We call this phenomenon {\bf resolvability} of the model $\scrP$ (see Section \ref{sec:main-res}). \textcolor{black}{As will be shown below, even for non-tree network models in class $\scrP$ ($\mvm \not\equiv 1$), resolvability holds and the local limit is given by an associated version of $\scrU$ whose dynamics now involve $\mvm$.} The corresponding asymptotics further give insight into the bias of various proposed network sampling algorithms and their ability to glean insight into connectivity properties of rare minorities. To keep this paper to manageable length, we specialize to the $\gamma \in \set{0,1}$, finite state space setting, leaving the general type space as well as sublinear case to future analysis.

\subsection{Organization of the paper}  
 {\cg In Section \ref{sec:mod-su} we describe a specific construction of the model class $\scrU$ which elucidates why asymptotics for this model is more tractable. We further review the concepts of local weak convergence and give precise definitions of relevant network functionals including homophily measures and PageRank scores. Section \ref{sec:main-res} has the the statements of the main results for the tree case (all out-degrees one) in the context of preferential attachment driven ($\gamma =1$) attribute models. Section \ref{sec:nt} extends these results to the non-tree setting. Section \ref{sec:exten-res} describes results in the uniform attachment  setting ($\gamma =0$). Section \ref{fairsec} employs the theoretical foundations developed in the previous sections to understand attribute estimation schemes when only partial information on network structure is available.  We discuss related work and indicate future work, in Section \ref{sec:disc}. The ensuing sections contain proofs of the results in the paper. }

 \section{Constructions and basic definitions}
 \label{sec:mod-su}
\subsection{The $\scrU$ Model}
\label{sec:model-def}
We now describe the precise construction of the model class $\scrU$. The setting is as in Definition \ref{def:attrib-evol} but we will use $\mvnu$ to denote the weighting measure on $(\cS, \cB(\cS))$ as it plays a very different role from $\mvpi$ in model class $\scrP$.  Notationally we mainly follow \cite{jagers1996asymptotic}. Given the driving parameters $\mvnu, \kappa, \gamma$ and a fixed type $a\in \cS $, first construct a point process $\mvxi$ on the space $\bR_+\times \cS$ \textcolor{black}{which is a Markovian pure birth process with intensity function} 
\begin{equation}
\label{eqn:rate}
{\cg	\lambda_a(dt, db):=  \kappa(a,b) (|\mvxi([0,t]\times \cS)| +1)^\gamma dt \cdot  \mvnu(db). }
\end{equation}
{\cg Since the majority of this paper only deals with finite attribute space $\cS$, let us describe the explicit construction  in this setting. Here $\kappa$ is just a (potentially assymmetric) matrix, and $\mvnu$ is a weight measure.  For fixed $a\in \cS$, one constructs $\cS$ pure birth Markov processes $\set{\xi_{a,b}[0,t]: t\geq 0)}$ whose dynamics are coupled via their respective intensity measures:
\begin{equation}
\label{eqn:rate-finite}
\lambda_a(dt, b) = \kappa(a,b) (|\mvxi([0,t]\times \cS)| +1)^\gamma dt \cdot  \mvnu(b), \qquad b\in \cS, t>0,
\end{equation}
where $\mvxi[0,t] = \sum_{b\in \cS} \xi_{a,b}[0,t] $. }
\begin{defn}[Network model class $\scrU$]
	\label{defn:ctbp}
	Fix a type $a\in \cS$. Consider the continuous time, multi-type, branching process $\set{\BP_{a; (\gamma, \mvnu, \kappa)}(t):t\geq 0}$ starting with a single individual of type $a$ and where each individual of type $a^\prime$ entering the population has an independent offspring point process with intensity rate $\lambda_{a^\prime}(\cdot)$ as in \eqref{eqn:rate} encoding reproduction times. For $n\geq 0$, define the stopping time (with respect to the natural filtration),
	\[T_n:= \inf\set{t\geq 0: |\BP_{a; (\gamma, \mvnu, \kappa)}(t)| = n + 1}.\]
\end{defn}
Since the context of applications of this model is for the evolution of networks, we will interchangeably use individual or node in the rest of the paper. The following Lemma {\cg is a manifestation of the Athreya-Karlin embedding \cite{athreya1968embedding}. In the Lemma, $\sim$ is used to denote ``has the same distribution as'' relationship between random objects. } 

\begin{lemma}\label{lem:embedding-ok}
	Assume $\cS$ is compact and $\kappa(\cdot, \cdot)$ is bounded. Then the above branching does not explode i.e. $T_n\convas \infty$ as $n\to\infty$. Let $\set{\cG_n:n\geq 0} \equiv \set{\BP(T_n):n\geq 0}$ denote the sequence of discrete  growing attributed network models. Then $\set{\cG_n:n\geq 0} \sim \scrU(\gamma, \mvnu, \kappa)$ starting with one vertex of type $a$. 
\end{lemma}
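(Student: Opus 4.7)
The plan is to establish both claims via the classical embedding of preferential-attachment dynamics into a CTBP through competing exponentials (the single-type analogue of which underpins~\cite{rudas2007random}). The two claims are essentially independent and I treat them in order. For non-explosion, I would stochastically dominate $N(t):=|\BP_{a;(\gamma,\mvnu,\kappa)}(t)|$ by a Yule process. Using compactness of $\cS$, boundedness of $\kappa$, and the implicit finiteness of $\mvnu$, set $C:=\sup_{r\in\cS}\int_\cS \kappa(r,s)\mvnu(ds)<\infty$. A vertex $v$ that has produced $n_v(t)$ offspring by time $t$ contributes to the total birth intensity at rate at most $C(n_v(t)+1)^\gamma$, and $\gamma\in[0,1]$ yields $(n_v(t)+1)^\gamma\le n_v(t)+1$. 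Hence the total rate of new births is at most $C\sum_v(n_v(t)+1)=C(2N(t)-1)\le 2CN(t)$, so $N(\cdot)$ is stochastically dominated by a Yule process with per-individual rate $2C$, which is a.s.\ finite at every $t$. This immediately gives $T_n=\inf\{t:N(t)=n+1\}\uparrow\infty$ a.s.

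For the embedding, I would fix $n\ge 0$ and condition on $\cG_n:=\BP(T_n)$. By the strong Markov property and the form of \eqref{eqn:rate}, the residual time until a given vertex $v\in\cG_n$ next fires is exponential with rate $\deg(v,n)^\gamma\int_\cS\kappa(a(v),s)\mvnu(ds)$, and conditionally on $v$ firing, the offspring's type has density proportional to $\kappa(a(v),\cdot)\mvnu(\cdot)$ on $\cS$. A standard competing-exponentials argument then gives that the next birth comes from vertex $v$ with offspring type in $ds$ with joint probability
\begin{equation*}
\frac{\kappa(a(v),s)\deg(v,n)^\gamma\mvnu(ds)}{\sum_{v'\in\cG_n}\deg(v',n)^\gamma\int_\cS\kappa(a(v'),s')\mvnu(ds')},
\end{equation*}
which matches \eqref{eqn:model-scu} exactly. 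Iterating in $n$ identifies the joint law of $\{\BP(T_n)\}_{n\ge 0}$ with that of $\scrU(\gamma,\mvnu,\kappa)$ started from one vertex of type $a$.

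The only bookkeeping point is the identity $\deg(v,n)=n_v(T_n)+1$ for every $v\in\cG_n$: for a non-root vertex this is immediate (one edge to the parent plus $n_v(T_n)$ edges to children), and for the root it is precisely the content of the convention $\deg(v_0,0)=1$ imposed in Definition~\ref{def:attrib-evol}, which is preserved under the dynamics. No step above is a genuine obstacle; the entire content of the lemma is that the construction in Definition~\ref{defn:ctbp} has been calibrated so that these two reconciliations go through, and the embedding then follows by inspection.
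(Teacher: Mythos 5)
Your proof is correct, and since the paper omits the argument entirely (stating only that the lemma ``is easy to check''), your write-up supplies exactly the standard reasoning the authors intend: stochastic domination of the total population by a linear (Yule) birth process of rate $2C$ for non-explosion, and the competing-exponentials / memorylessness argument identifying the jump chain of the stopped CTBP with the dynamics \eqref{eqn:model-scu}. The bookkeeping identity $\deg(v,n)=n_v(T_n)+1$, including the root convention $\deg(v_0,0)=1$, is indeed the only point requiring care, and you handle it correctly.
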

This is the construction we will use. {\cg Conceptually, the above construction also explains why model class $\scrU$ is easier to analyze since (in continuous time) the dynamics is completely driven by the reproduction of individuals currently in the population, which happens (conditional on their types) independently across individuals; this conditional independence allows for a host of asymptotics to be derived for $\scrU$. }

% ; unlike the model class $\scrP$ where the evolution of ``children'' of any vertex intricately depends on the entire network, this is not the case for model class $\scrU$ where the birth times and types of children of every vertex is essentially (conditional on its type) independent. 

% Following \cite{jagers1996asymptotic} we make the following technical assumptions on the model.
%
% \begin{ass}
% 	\label{ass:mtbp}
% 	xxx Fill up. From \cite{jordan2013geometric} we will probably need $\kappa(x,y)\geq \alpha_0$ for all $x,y$ and some $\alpha_0> 0$.
% \end{ass}
%
\subsection{Local weak convergence for trees}

The aim of this section is to formalize the notion of convergence of neighborhoods of large random trees {\cg around typical vertices} to neighborhoods of limiting infinite discrete structures.  Local weak convergence of discrete random structures  has now become quite standard in probabilistic combinatorics see e.g. \cite{aldous-fringe,aldous-steele-obj, benjamini-schramm,van2023random}. {\cg Since this is technical there are two main motivating rationales for describing this in depth: 
\begin{inparaenuma}
 \item Rooted trees have a natural orientation towards the root; Aldous in \cite{aldous-fringe} showed that for families of random trees with size $n\to\infty$, if the subtree of descendants of a uniformly chosen vertex converges in distribution, then under general conditions, this implies convergence (in an appropriate sense) of the sequence of trees themselves, rooted uniformly at random, to a `one-ended' infinite rooted random tree;
 \item Convergence to such an infinite random objects implies not just asymptotic information for local functionals such as degree distribution, but even {\bf global} functionals such as the spectral distribution of the adjacency matrix (Theorem \ref{thm:rand-adj}).
 \end{inparaenuma} Thus, we start with notions of fringe decomposition for random trees required to formally address this mode of convergence. 
 Since our results also address non-tree networks, we describe notions of local weak convergence for general directed attributed networks in a later Subsection.}

 % In the case of trees, following \cite{aldous-fringe}, the theory of local graph limits has an equivalent formulate in terms of convergence of fringe distribution which we next describe.   Since we will need extensions to attributed marked trees, we adapt an explicit metric in \cite[Section 2.3]{van2023random} for this setting. 

\subsubsection{Extended fringe decomposition for marked trees}
\label{sec:fri-decomp}
Fix attribute space $\cS$ and assume for the rest of the paper that $\cS$ is a Polish space with distance metric $d_{\cS}$. For $n\geq 1$, let $ \bT_{n,\cS} $ be the space of all rooted trees on  $n$  vertices where every vertex has a mark in $\cS$. Let $ \bbT_{\cS} =
\cup_{n=0}^\infty \bT_{n,\cS} $ be the space of all finite rooted marked trees.  Here $\bT_{0,\cS} = \emptyset $ will be used to represent the empty tree (tree on zero vertices). For any $\bt\in  \bbT_{\cS} $ and $v\in \bt$, write $a(v)$ for the corresponding attribute of that vertex. Let $\rho_{\bt}$ denote the root of $\bt$.  For any $r\geq 0$ and $\bt\in \bbT_{\cS}$, let $B(\bt, r) \in \bbT_{\cS}$ denote the subgraph of $\bt$ of vertices within graph distance $r$ from $\rho_{\bt}$, viewed as an element of $\bbT_{\cS}$ and rooted again at $\rho_{\bt}$. 

 Given two rooted finite trees $\bs, \bt \in \bbT_{\cS}$,  say that $\bs \simeq \bt $ if, after ignoring all attribute information,  there exists a {\bf root
preserving} isomorphism between the two trees viewed as unlabelled graphs.  Given two rooted trees $\bt,\bs \in \bbT_{\cS}$ (adapting \cite[Equation 2.3.15]{van2023random}), define the distance 
\begin{equation}
\label{eqn:distance-trees}
	d_{\bbT_{\cS}}(\bt,\bs):= \frac{1}{1+R^*}
\end{equation}
where 
\begin{align*}
	R^* =\sup\{r&: B(\bt, r) \simeq B(\bs, r), \text{ and } \exists ~ \text{ isomorphism } {\cg \sF_r} \text{ between }\\
	& B(\bt, r) \text{ and } B(\bs, r) \text{ with } d_{\cS}(a(v), a({\cg \sF_r(v)})) \leq 1/r~ \forall~ v\in B(\bt, r)
	\}.
\end{align*}

 Next, fix a tree $\bt\in \bbT_{\cS}$ with root $\rho = \rho_\bt$ and a vertex $v\in \bt$ at (graph) distance $h$ from the root.  Let $(v_0 =v, v_1, v_2, \ldots, v_h = \rho)$ be the unique path from $v$ to $\rho$. The tree $\bt$  can be decomposed as $h+1$ rooted trees $f_0(v,\bt), \ldots, f_h(v,\bt)$, where $f_0(v,\bt)$ is the tree rooted at $v$, consisting of all vertices for which there exists a path from the root passing through $v$, and for $i \ge 1$, $f_i(v,\bt)$ is the subtree rooted at $v_i$, consisting of all vertices for which the path from the root passes through $v_i$ but not through $v_{i-1}$. 
  Call the map $(v,\bt) \leadsto \bbT_{\cS}^\infty$ where $v\in \bt$, defined via, 
\[F(v, \bt) = \left(f_0(v,\bt), f_1(v,\bt) , \ldots, f_h(v,\bt), \emptyset, \emptyset, \ldots \right),\]
as the fringe decomposition of $\bt$ about the vertex $v$. Call $f_0(v,\bt)$ the {\bf fringe} of the tree $\bt$ at $v$.
For $k\geq 0$, call $F_k(v,\bt) = (f_0(v,\bt) , \ldots, f_k(v,\bt))$ the {\bf extended fringe} of the tree $\bt$ at $v$ truncated at distance $k$ from $v$ on the path to the root ({\cg see Figure \ref{fig:fringe}}).
\begin{figure}
\centering
\includegraphics[scale=.24]{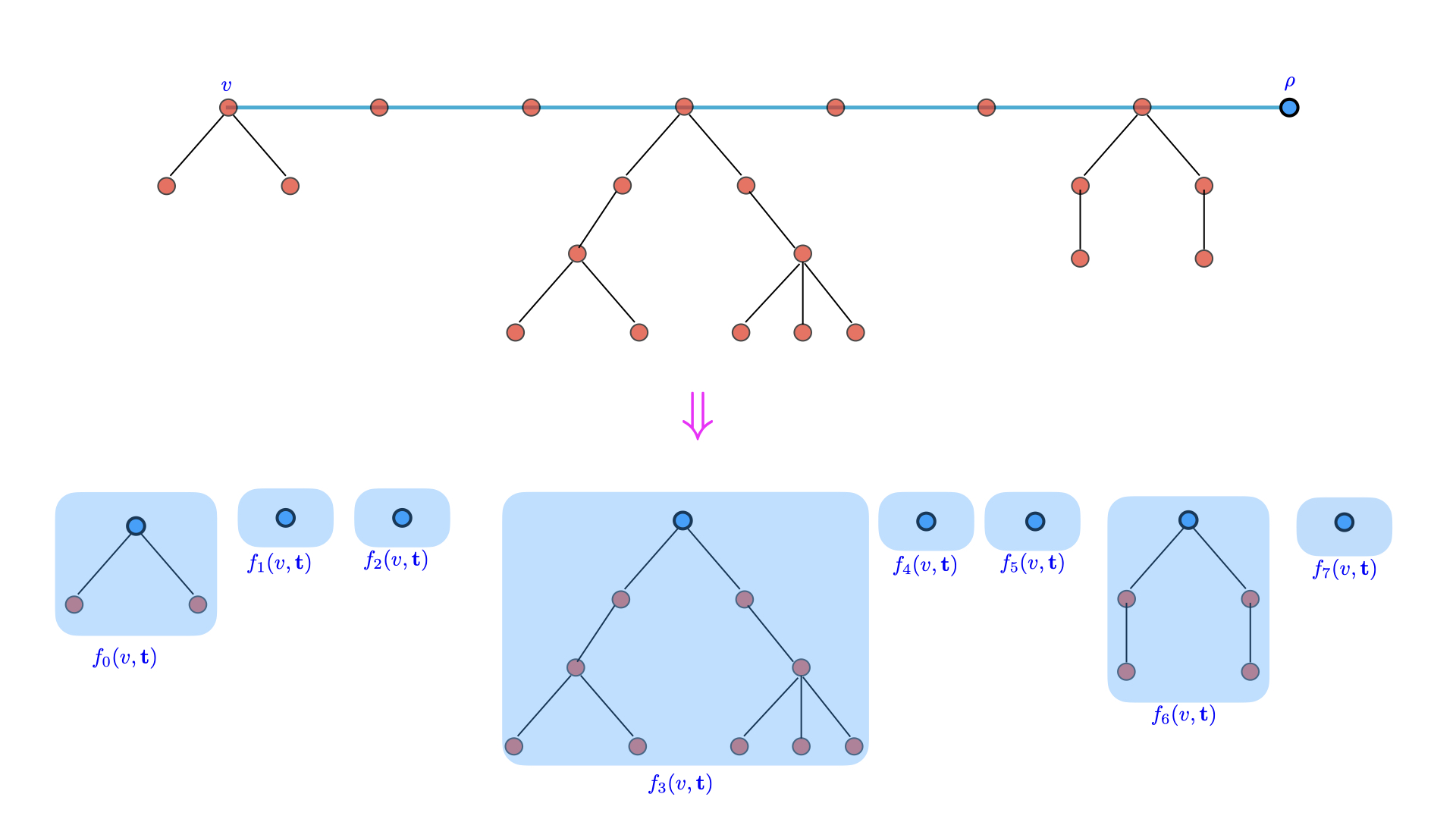}
\caption{Fringe decomposition around vertex $v$ of a finite tree rooted at $\rho$. Here the blue colors represent roots of the respecitve trees. }
\label{fig:fringe}
\end{figure}
Now consider the space $\bbT^\infty_{\cS}$. The metric in \eqref{eqn:distance-trees} extends in a straightforward fashion to $\bbT^\infty_{\cS}$ e.g. 
\begin{equation}
\label{eqn:dist-inf}
	d_{\bbT_{\cS}^\infty}((\bt_0, \bt_1, \ldots),(\bs_0, \bs_1, \ldots)):= \sum_{i=0}^\infty \frac{1}{2^i} d_{\bbT_{\cS}}(\bt_i, \bs_i). 
\end{equation}
{\cg Here $\set{\bt_i:i\geq 1}$ (respectively $\set{\bs_i:i\geq 1}$) are a countable sequence of trees each in $\bbT_{\cS}$.  We can also define analogous extensions of the metric} to $\bT_{\cS}^k$ for finite $k$.  

Next, an element $\bfomega = (\bt_0, \bt_1, \ldots) \in \bbT^\infty_{\cS}$, with $|\bt_i|\geq 1$ for all $ i\geq 0$,  can be thought of as a locally finite infinite rooted tree with a {\bf s}ingle  path to {\bf in}finity (thus called a {\tt sin}-tree \cite{aldous-fringe}), as follows: Identify the sequence of roots of $\set{\bt_i:i\geq 0}$ with the integer lattice $\Zbold_+ = \set{0,1,2,\ldots}$, equipped with the natural nearest neighbor edge set, rooted at $\rho=0$ ({\cg see Figure \ref{fig:sin}}).
\begin{figure}
\centering
\includegraphics[scale=.22]{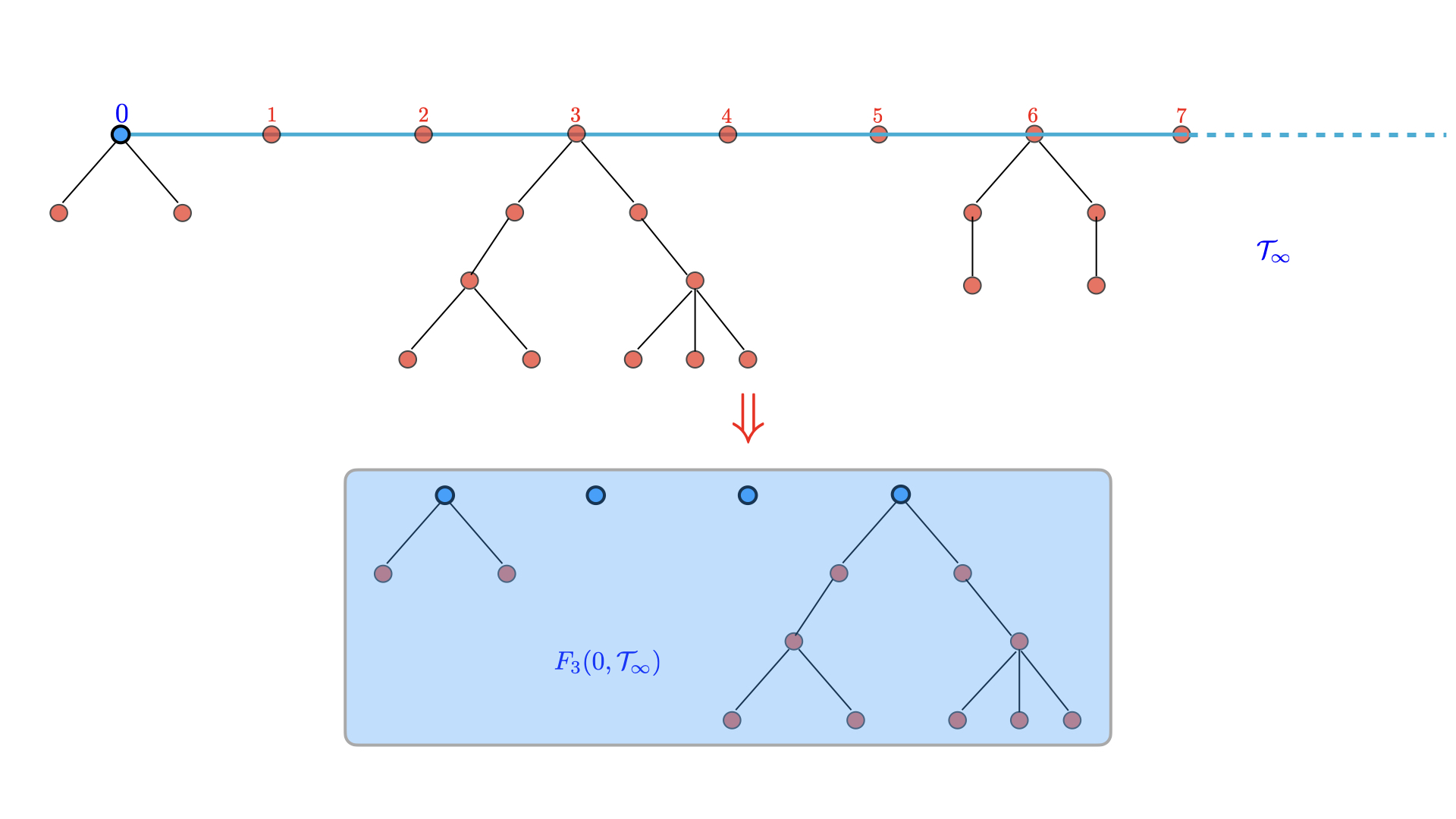}
\caption{A {\tt sin}-tree $\cT_\infty$, namely a tree rooted at $0$ with a single infinite path to infinity, and the corresponding extended fringe $F_3(0,\cT_\infty)$ upto level three about $0$. }
\label{fig:sin}
\end{figure}
  Analogous to the definition of extended fringes for finite trees, for any $k\geq 0$ write 
$F_k(0,\bfomega)= (\bt_0, \bt_1, \ldots, \bt_k)$. 
Call this the extended fringe of the tree $\bfomega$ at vertex $0$, till distance $k$, on the infinite path from $0$. Call $\bt_0 = F_0(0,\bfomega)$ the {\bf fringe} of the {\tt sin}-tree $\bfomega$. Now suppose $\prob$ is a probability measure on $\bbT^\infty_{\cS}$ such that, for $\TT:= (\bt_0(\TT), \bt_1(\TT),\ldots)\sim \prob$,  $|\bt_i(\TT)|\geq 1$ {\cg almost surely (a.s.)}  $\forall~ i\geq 0$. Then $\TT$ can be thought of as an infinite {\bf random} {\tt sin}-tree. 

\textcolor{black}{There is a slightly different representation for infinite {\tt sin}-trees that turns out to be handy in studying convergence properties of random tree processes.
\begin{wrapfigure}{r}{0.25\textwidth}
	  \begin{center}
		\includegraphics[width=0.24\textwidth]{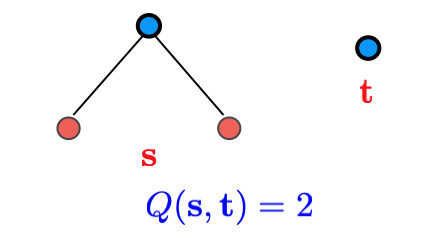}
	  \caption{}
	\label{fig:qst}
	  \end{center}
 	  \caption{Examples of multilayer networks }
	\end{wrapfigure} 
 Define a matrix $\vQ = (\vQ(\vs,\vt): \vs, \vt \in \bbT_{\cS})$ via the following prescription: suppose the root $\rho_{\vs}$ in $\vs$ has degree $\deg(\rho_{\vs}) \ge 1$, and let $(v_1,\ldots, v_{\deg(\rho_{\vs})})$ denote its children. For $1\leq i\leq v_{\deg(\rho_{\vs})}$, let {\cg $\fT(\vs, v_i)$} be the subtree below $v_i$ and rooted at $v_i$, viewed {\cg as} an element of $\bbT_{\cS}$. Write,
\begin{equation}
\label{eqn:Q-def}
	\vQ(\vs,\vt):= \sum_{i=1}^{\deg(\rho_{\vs})} \ind\set{d_{\bbT_{\cS}}({\cg \fT(\vs, v_i)}, \vt) = 0}. 
\end{equation} 
{\cg See the figure on the right for an example.} In words, $Q(\vs, \vt)$ counts the number of descendant subtrees of the root of $\vs$ that are isomorphic, both in the sense of topology and also associated marks, to $\vt$. If $\deg(\rho_{\vs})=0$, define $Q(\vs, \vt)=0$. Now consider a sequence,
{\cg 
\begin{equation}
\label{eqn:1025}
(\bar \vt_0, \bar \vt_1, \dots) \subseteq \bbT \mbox{ such that } Q(\bar \vt_i, \bar \vt_{i-1}) \ge 1~~ \forall ~ i\geq 1. 
\end{equation}
}
%$(\bar \vt_0, \bar \vt_1, \dots)$ of trees in $\bbT$ such that $Q(\bar \vt_i, \bar \vt_{i-1}) \ge 1$ {\cg for} all $i \ge 1$. 
Then there exists a unique {\tt sin}-tree $\TT$ with infinite path indexed by $\Zbold_+$ such that $\bar \vt_i$ is the subtree rooted at $i$ for all $i \in \Zbold_+$. Conversely, it is easy to see, by taking $\bar \vt_i$ to be the union of (vertices and induced edges) of $\vt_0,\dots, \vt_i$ for each $i \in \Zbold_+$, that every infinite {\tt sin}-tree has {\cg a representation of the form \eqref{eqn:1025}}. Following \cite{aldous-fringe}, we call this the \emph{monotone representation} of the {\tt sin}-tree $\TT$.}

\subsubsection{Convergence on the space of trees}
\label{sec:fringe-convg-def}
Now for any $1\leq k\leq \infty$, let $\cM_{\pr}(\bbT_{\cS}^k)$ denote the space of probability measures on the associated space, metrized using the topology of weak convergence inherited from the corresponding metric on the space $\bbT_{\cS}^k$, see e.g. \cite{billingsley2013convergence}. 
Suppose $\set{\cT_n}_{n\geq 1} \subseteq \bbT_{\cS}$ be a sequence of {\bf finite} rooted random trees on some common probability space (for notational convenience,  assume $|\cT_n|= n$, all one needs is $|\cT_n|\convas \infty$). For $n\geq 1$ and for each fixed $k\geq 0$, consider the empirical distribution of fringes up to distance $k$ ($\delta\{\cdot\}$ below represents Dirac mass):
\begin{equation}
\label{eqn:empirical-fringe-def}
	\fP_{n}^k:= \frac{1}{n} \sum_{v\in \cT_n} \delta\set{F_k(v,\cT_n)}. 
\end{equation}
Thus $\set{\fP_{n}^k:n\geq 1}$ can be viewed as random sequence in  $\cM_{\pr}(\bbT_{\cS}^k)$, with accompanying {\cg notions} of almost sure convergence and convergence in distribution.

\begin{defn}[Local weak convergence]
	\label{def:local-weak}
	Consider two %(potentially distinct) 
	notions of convergence of $\set{\cT_n:n\geq 1}$:
	\begin{enumeratea}
	    \item \label{it:fringe-a} Fix a probability measure $\varpi$ on $\bT_{\cS}$. Say that a sequence of trees $\set{\cT_n}_{n\geq 1}$ converges almost surely, in the fringe sense, to $\varpi$, if 
		\[\fP_n^{0}:= \frac{1}{n} \sum_{v\in \cT_n} \delta\set{F_0(v,\cT_n)} \convas \varpi, \qquad \text{ as } n\to\infty.  \]
	Denote this convergence by $\TT_n\probfr \varpi$ as $n\to\infty$.
	    \item \label{it:fringe-b} Say that a sequence of trees $\set{\cT_n}_{n\geq 1}$ converges almost surely, in the {\bf extended fringe sense}, to a limiting infinite random {\tt sin}-tree $\TT_{\infty}$ if for all $k\geq 0$ one has
	  \[\fP_n^k \stackrel{\mathrm{a.s.}}{\longrightarrow} \prob\left(F_k(0,\TT_{\infty}) \in \cdot \right), \qquad \text{ as } n\to\infty. \]
	Denote this convergence by $\TT_n\probcrf \TT_{\infty}$ as $n\to\infty$.
	\end{enumeratea}
\end{defn} 
{\cg Intuitively, fringe convergence, namely (a) above implies that, if we look at the subtree below \emph{a typical} (i.e. selected uniformly at random) vertex, then the corresponding random tree converges in distribution as the network size $n\to\infty$. Extended fringe convergence implies not just the structure of the neighborhood {\bf below} typical vertices, but the entire local neighborhood, within any finite distance converges. Convergence in (b) above clearly implies convergence in notion (a) with $\varpi(\cdot) =\pr(F_0(0, \cT_{\infty}) = \cdot)$.
More surprisingly, if the limiting distribution $\varpi$ in (a) has a certain `stationarity' property (defined next), \emph{convergence in the fringe sense implies convergence in the extended fringe sense}. }

\textcolor{black}{\begin{defn}[Fringe distribution \cite{aldous-fringe}] \label{fringedef}
	Say that a probability measure $\varpi$ on $\bbT_{\cS}$ is a fringe distribution if 
	\[\sum_{\vs} \varpi(\vs) \vQ(\vs, \vt) = \varpi(\vt), \qquad \forall \ \vt \in \bbT_{\cS}. \]
\end{defn}
For any fringe distribution $\varpi$ on $\bbT_\cS$, one can uniquely obtain the law $\varpi^{EF}$ of a random {\tt sin}-tree $\TT$ with monotone decomposition $(\bar\bt_0(\TT), \bar\bt_1(\TT),\ldots)$ such that for any $i \in \Zbold_+$, any $\bar \vt_0, \bar \vt_1, \dots$ in $\bbT_\cS$,
\begin{equation}\label{ftoef}
\varpi^{EF}((\bar\bt_0(\TT), \bar\bt_1(\TT),\ldots, \bar\bt_i(\TT)) = (\bar \vt_0,\vt_1,\dots,\vt_i)) := \varpi(\vt_i) \prod_{j=1}^{i}Q(\vt_i,\vt_{i-1}),
\end{equation}
where the product is taken to be one if $i=0$. The following Lemma follows by adapting the proof of \cite[Propositions 10 and 11]{aldous-fringe} and {\cg the proof is thus omitted}.}

\textcolor{black}{\begin{lemma}\label{ftoeflemma}
Suppose a sequence of trees $\set{\cT_n}_{n\geq 1}$ converges almost surely, in the fringe sense, to $\varpi$. Moreover, suppose that $\varpi$ is a fringe distribution in the sense of Definition \ref{fringedef}. Then $\set{\cT_n}_{n\geq 1}$ converges almost surely, in the extended fringe sense, to a limiting infinite random {\tt sin}-tree $\TT_{\infty}$ whose law $\varpi^{EF}$ is uniquely obtained from $\varpi$ via \eqref{ftoef}.
\end{lemma}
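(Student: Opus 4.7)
The plan is to first identify the candidate limit $\varpi^{EF}$ specified by \eqref{ftoef} as the law of a genuine random sin-tree $\TT_\infty$, and then to upgrade the hypothesis $\fP_n^0\probfr\varpi$ to convergence of all extended-fringe empirical measures $\fP_n^k$ by induction on the truncation depth~$k$.

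For the first step, I would view \eqref{ftoef} as prescribing, for each $i\geq 0$, the marginal law of the truncated monotone decomposition $(\bar\bt_0,\ldots,\bar\bt_i)$ on the (countable) product space of finite rooted marked trees. Consistency of these marginals amounts to checking that summing over the top coordinate $\bar\bt_{i+1}$ returns the level-$i$ marginal, which after cancelling the common $Q$-factors reduces to
\[
\sum_{\bar\bt_{i+1}} \varpi(\bar\bt_{i+1})\,Q(\bar\bt_{i+1},\bar\bt_i)=\varpi(\bar\bt_i),
\]
which is \emph{exactly} the hypothesis that $\varpi$ is a fringe distribution (Definition \ref{fringedef}). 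Kolmogorov's extension theorem then produces a unique probability measure $\varpi^{EF}$ on $\bbT_\cS^\infty$, and one checks that its support consists of monotone decompositions of genuine sin-trees (the constraint $Q(\bar\bt_i,\bar\bt_{i-1})\geq 1$ is built into \eqref{ftoef}).

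For the convergence step, the key tool is a deterministic counting identity on $\cT_n$. For any $v\in\cT_n$ at depth $\geq k$ with $F_k(v,\cT_n)=(\bar\bt_0,\ldots,\bar\bt_k)$, its parent $v_1$ has extended fringe of the form $F_{k-1}(v_1,\cT_n)=(\bar\bs,\bar\bt_2,\ldots,\bar\bt_k)$, where $\bar\bs$ is obtained from $\bar\bt_1$ by re-attaching $\bar\bt_0$ as one of the children of the root; conversely for any such $\bar\bs$, each $v_1$ with this extended fringe contributes exactly $Q(\bar\bs,\bar\bt_0)$ children $v$ with the required $F_k$. Summing yields
\[
n\,\fP_n^k(\bar\bt_0,\ldots,\bar\bt_k)=\sum_{\bar\bs}Q(\bar\bs,\bar\bt_0)\,\#\{v_1\in\cT_n: F_{k-1}(v_1,\cT_n)=(\bar\bs,\bar\bt_2,\ldots,\bar\bt_k)\}+O(1),
\]
the $O(1)$ arising from vertices near the root. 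The inductive hypothesis applied to each term on the right then gives almost sure pointwise convergence
\[
\fP_n^k(\bar\bt_0,\ldots,\bar\bt_k)\stackrel{\mathrm{a.s.}}{\longrightarrow}\varpi(\bar\bt_k)\prod_{j=1}^{k}Q(\bar\bt_j,\bar\bt_{j-1}),
\]
which matches the $(k+1)$-marginal of $\varpi^{EF}$; the base case $k=0$ is the hypothesis $\fP_n^0\probfr\varpi$.

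The main obstacle, as in Aldous, is upgrading this per-isomorphism-class almost sure convergence to almost sure weak convergence of the random probability measures $\fP_n^k$ on $\bbT_\cS^{k+1}$, since $\bbT_\cS$ has infinitely many isomorphism types. I would address this via tightness: the marginal of $\fP_n^k$ in the deepest coordinate $\bar\bt_k$ is close to $\fP_n^0$ shifted up the spine, and tightness of $\fP_n^0$ (inherited from its convergence to a probability measure) together with the fact that the counting identity above is mass-preserving when summed over $(\bar\bt_0,\ldots,\bar\bt_{k-1})$ propagates tightness to every level. Portmanteau then converts pointwise convergence to weak convergence on a single almost sure event (using a countable diagonal over $k$ and isomorphism classes), giving $\fP_n^k\probcrf\varpi^{EF}$ as required.
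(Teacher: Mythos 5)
Your proposal is correct and follows essentially the route the paper intends: the paper omits the proof, stating only that the lemma ``follows by adapting the proof of [Aldous, Propositions 10 and 11]'', and your argument --- Kolmogorov consistency of \eqref{ftoef} via the fringe-distribution identity, the parent--child counting identity through $\vQ$, induction on the truncation depth, and the upgrade from atomwise to weak convergence using that no mass escapes to shallow vertices --- is precisely that adaptation. The only blemish is notational: in your counting identity the sum over $\bar\bs$ must be restricted to the single isomorphism class compatible with the fixed pair $(\bar\bt_0,\bar\bt_1)$ (in the monotone representation this is just $\bar\bs=\bar\bt_1$, with exactly $Q(\bar\bt_1,\bar\bt_0)$ children contributing), after which the displayed limit is exactly the $(k+1)$-marginal of $\varpi^{EF}$.
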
}

\textcolor{black}{Both notions imply convergence of functionals such as the degree distribution. For example, in notion (a), letting $\cT_{\varpi} \sim \varpi$ with root denoted by $0$ say, convergence in notion (a) in particular implies,  for any {\cg $\ell \geq 0$}, 
\begin{equation}
\label{eqn:deg-convg-fr}
	\frac{\#\set{v\in \TT_n, \deg(v) = {\cg \ell} +1}}{n} \convas \prob(\deg(0,\cT_{\varpi})={\cg \ell}).
\end{equation}
%Here $\deg(0,\TT_{\infty})$ denotes the number of edges connected to $0$ in its fringe $\bt_0(\TT_{\infty})$. 
However, both convergences give a lot more information about the asymptotic properties of $\cT_n$, beyond its degree distribution, including convergence of some global functionals such as spectral properties of adjacency matrices \cite{bhamidi2012spectra}, and functionals described in Section \ref{sec:func-interest}.} 

\subsection{Local weak convergence for directed graphs}\label{lwcnt}
We will now outline the notion of local weak convergence for general directed graphs, largely following \cite{garavaglia2020local}.
Let $\mathbb{G}$ denote the space of directed, marked, rooted graphs with finite in-degrees and out-degrees \cite[Definition~3.8]{garavaglia2020local}. Elements of this space comprise directed graphs $G$ with a distinguished vertex $\emptyset$ called the root. Moreover, each vertex carries a mark which, for simplicity, is assumed here to take values in a finite set $\cS$. There is a natural concept of isomorphism `$\cong$' between two elements of $\mathbb{G}$ \cite[Definition 3.9]{garavaglia2020local}: two such elements are isomorphic if there exists a bijection between the vertex sets which maps root to root and preserves the directed adjacency structure and marks of the vertices. We will denote by $\mathbb{G}_{\star}$ the quotient space of $\mathbb{G}$ with respect to the equivalence given by isomorphisms. A generic element of $\mathbb{G}_{\star}$ will be denoted by $(G, \emptyset, M(G))$, where $M(G)$ denotes the set of marks on vertices of $G$.

For any $k \in \mathbb{N} \cup \{0\}$, the $k$-neighborhood of the root $\emptyset$ in $(G, \emptyset, M(G))$, denoted by $U_{\le k}(\emptyset)$, is obtained by progressively exploring vertices using incoming edges \emph{in the opposite direction}, starting from the root, up till graph distance $k$ from the root and revealing the marks and connectivity structure of the explored vertices \cite[Definition 3.10]{garavaglia2020local}. Note that $U_{\le k}(\emptyset)$ thus constructed is a marked directed subgraph of $(G, \emptyset, M(G))$. This leads to a natural distance on the space $\mathbb{G}_{\star}$: for two elements $(G, \emptyset, M(G))$, $(G', \emptyset', M(G'))$ in $\mathbb{G}_{\star}$, define $d_{loc}((G, \emptyset, M(G)), (G', \emptyset', M(G'))) := 1/(\fK + 1)$, where $\fK := \inf\{k \ge 1 : U_{\le k}(\emptyset) \not\cong U_{\le k}(\emptyset')\}$. Unlike for undirected graphs, this distance is not a metric on $\mathbb{G}_{\star}$, but a pseudonorm, as the distance between two distinct elements in $\mathbb{G}_{\star}$ can be zero. The reason is that, in the directed setting, edges are explored only in one direction, leaving parts of the graph unexplored \cite[Figure 4]{garavaglia2020local}. Thus, two rooted marked directed graphs can have identical explorable root neighborhoods without being isomorphic. However, if $d_{loc}((G, \emptyset, M(G)), (G', \emptyset', M(G')))=0$, it can be shown that the incoming neighborhoods for the root (the possibly infinite subgraph that can be explored starting from the root) in the two graphs are isomorphic. Namely, denoting the respective incoming neighborhoods by $U_{\infty}(\emptyset)$ and $U_{\infty}(\emptyset')$, we have $U_{\infty}(\emptyset) \cong U_{\infty}(\emptyset')$. This leads to an equivalence relation on $\mathbb{G}_{\star}$. Denoting by $\tilde{\mathbb{G}}_{\star}$ the quotient space of $\mathbb{G}_{\star}$ under this equivalence relation, it follows that $(\tilde{\mathbb{G}}_{\star}, d_{loc})$ is a Polish space.

Now, we have all the tools to describe the concept of local weak convergence in the directed graph setting. For a sequence $\{(\cG_n, M(\cG_n))\}_{n \in \mathbb{N}}$ of marked, directed random graphs, define 
$$
\mathbb{P}_n := \frac{1}{|V(\cG_n)|} \sum_{v \in V(\cG_n)} \delta\set{(\cG_n, v, M(\cG_n))}
$$
as the empirical measure corresponding to selecting the root in $(\cG_n, M(\cG_n))$ uniformly at random in the vertex set $V(\cG_n)$.

\begin{defn}[Local weak convergence for directed graphs]\label{def:lwc}
Consider a sequence $\{(\cG_n, M(\cG_n))\}_{n \in \mathbb{N}}$ of marked, directed random graphs defined on the same probability space. We say $(\cG_n, M(\cG_n))$ converges \emph{almost surely in the local weak sense} to a random element $\mathcal{G}^* \in \tilde{\mathbb{G}}_{\star}$ with law $\mathbb{P}^*$ if for any bounded continuous function $f : \tilde{\mathbb{G}}_{\star} \rightarrow \mathbb{R}$,
$$
\mathbb{E}_{\mathbb{P}_n}(f) \convas \mathbb{E}_{\mathbb{P}^*}(f) \qquad n \to \infty,
$$
where $\mathbb{E}_{\mathbb{P}_n}$ and $\mathbb{E}_{\mathbb{P}^*}$ respectively denote expectation taken with respect to the laws $\mathbb{P}_n$ and $\mathbb{P}^*$. In this case, we write $\cG_n \stackrel{\mbox{$\operatorname{a.s.}$-\bf loc}}{\longrightarrow}\mathcal{G}^*$.
\end{defn}

\subsection{Functionals of interest}
\label{sec:func-interest}
Since the majority of the paper will deal with discrete type space, we will mainly phrase functionals of interest in this context, \textcolor{black}{briefly discussing the more general context in Section \ref{futw}}.  Fix a sequence of network models $\set{\cG_n:n\geq 1}$. For $n\geq 1$ and fixed \textcolor{black}{$k\geq 1$}, we will let $N_n(k)$ denote the number of vertices with degree $k$ in $\cG_n$. We let $\vp_n = \set{N_n(k)/|{\cg V(\cG_n)}|:k\geq 1}$ denote the corresponding empirical probability mass function. In the attributed network setting, we let the joint empirical distribution of degree and attributes and the marginal distribution of the attributes be:
\begin{equation}
\label{eqn:jt-dist}
	\varkappa_n(\cdot) = \frac{1}{|{\cg V(\cG_n)}|} \sum_{v\in V(\cG_n)} \delta_{\set{(\deg(v), a(v))}}, \qquad \hat{\mvpi}_n(\cdot) = \varkappa_n(\bN\times \cdot). 
\end{equation}
We let $M_n$ denote the maximal degree of all vertices in $\cG_n$. When $\cS$ is discrete, we will without loss of generality assume we have the discrete metric $d(a,a^\prime) = \ind\set{a=a^\prime}$.  We let $M_n^a$ denote the maximal degree amongst all vertices with attribute $a$. Similarly, in the discrete setting, we let $\vp_n^a$ denote the empirical degree distribution of all vertices with attribute $a$. %It will often be easier to phrase things in terms of the tail pmf $\bar{\vp}_n^a:= (\bar{p}_n^a(k): k\geq 1)$,  with $\bar{p}_n^a(k) = \sum_{j\geq k} p_n^a(j)$.

 Next, once again for discrete latent space $\cS$ one can define empirical notions of homophily (propensity of vertices to connect to other vertices of the same type) and heterophily (propensity to connect to vertices of other types) \cite{park2007distribution}. For fixed $n$, let $\cE_n$ denote the total edge set of $\cG_n$; for $a\in \cS$, let $\cV_{n,a}$ be the set of nodes of type $a$, and for $a,a'\in \cS$, let $\cE_{n, aa'}$ be the set of edges {\cg in either direction (since our underlying graphs are directed)} between nodes of type $a$ and $a'$. Let $p_n = |\cE_n|/{n \choose 2} $ be the edge density. For $a\in \cS$,  $D_{n, a} = |\cE_{n, aa}|/({|\cV_{n, a}| \choose 2} p_n) $ contrasts density of edges within the cluster of nodes of type $a$ versus a setting where all edges are randomly distributed; thus $D_{n, a} > 1$ signals homophilic characteristics of type $a$ nodes while $D_{n,a}<1$ signifies heterophilic nature of type $a$ vertices. Similarly, for $a\neq a'$, $H_{n, aa'} = |\cE_{n, aa'}|/(|\cV_{n, a}||\cV_{n,a'}|p_n)$ denotes propensity of type $a$ nodes to connect to type $a'$ nodes as contrasted with random placement of edges \textcolor{black}{with probability equal to the} global edge density; as before  for $a\neq a^\prime $, $H_{n, aa'} > 1$ signals higher propensity of edges being present between nodes with these attributes than one would expect out of pure random placement of the edges.  

Finally we define PageRank scores \cite{page1999pagerank}. For the rest of the paper, we view the networks under consideration as directed trees with edges pointing from offspring to parents (i.e. if a new vertex $v$ is attached to an extant vertex $v^\prime$ then this is recorded as a directed edge $v\leadsto v^\prime$). 

    \begin{defn}[PageRank scores with damping factor $c$]
    \label{def:page-rank}
	    For a directed graph $\cG = (\cV, \cE)$, the PageRank scores of vertices $v\in \cV$ with damping factor $c$ is the stationary distribution $(\fR_{v,c}: v\in \cG)$ of the following random walk: at each step, with probability $c$, follow an outgoing edge (uniform amongst available choices) from the current location in the graph while, with probability $1-c$, restart at a uniformly selected vertex in the entire graph. These scores are given by the linear system of equations:
	    \begin{equation}
	        \label{eqn:page-rank}
	        \fR_{v,c} = \frac{1-c}{n} + c\sum_{u\in \cN^{-}(v)} \frac{\fR_{u,c}}{d^{+}(u)}
	    \end{equation}
	    where $\cN^{-}(v)$ is the set of vertices with edges pointed at $v$ and $d^{+}(u)$ is the out-degree of vertex $u$. 
	\end{defn}
     At vertices with zero out-degree (e.g. the root of a directed tree), the random walk stays in place with probability $c$ and jumps to a uniformly chosen vertex with probability $1-c$. {\cg For such a vertex $v$, the associated stationary distribution mass $\hat \fR_{v,c}$ then takes the form $\hat \fR_{v,c} = c\hat \fR_{v,c} +  \frac{1-c}{n} + c\sum_{u\in \cN^{-}(v)} \frac{\fR_{u,c}}{d^{+}(u)}$. The corresponding PageRank value is defined as $\fR_{v,c} := (1-c)\hat \fR_{v,c}$ to keep the formula \eqref{eqn:page-rank} the same for all vertices.}

\section{Main Results: Tree networks}
\label{sec:main-res}

To ease the reader into the heart of the paper, we will start with the tree setting $\scrP(1,\mvpi,\kappa)$ (all out-degrees one). These results will be shown to generalize to non-tree models ($\mvm \not\equiv 1$) in the following section. We will start with a precise definition of resolvability that allows the potential connection between model class $\scrP$ and $\scrU$. Then,  the rest of this section focuses on the linear preferential attachment ($\gamma \equiv 1$ case), when the attribute space $\cS$ is finite, describing local weak limits, and subsequent implications for functionals of interest. 

\subsection{Resolvability}

Recall the main model class of interest $\scrP$ and the corresponding model $\scrU$ with construction as in Definition \ref{defn:ctbp}, {\cg where from \eqref{eqn:rate-finite}, the weight measure $\mvnu$ plays the role of modulating the rate of new vertices of specific types being born into the system in the continuous time construction of $\scrU$}. For $\set{\cG_n}_{n\geq 0} \sim \scrU$, let $\hat{\mvpi}_n= n^{-1}\sum_{i=1}^n \delta\set{a(v_i)} $ denote the empirical measure of attribute types in $\cG_n$. 

\begin{defn}[Resolvability]
	\label{def:resolv} 
	Say that a model $\scrP(\gamma, \mvpi, \kappa)$ is resolvable if  there exists a probability measure $\mvnu$ such that,  for the matching model $\scrU(\gamma, \mvnu, \kappa)$, the corresponding limit density $\hat{\mvpi}_n \probc \mvpi_\infty$ exists and further $\mvpi_\infty = \mvpi$. Call $\mvnu$ \textcolor{black}{a} $\scrP\leadsto\scrU$ resolving distribution for $\mvpi$. %Say that the model is uniquely resolvable if such a $\mvnu$ is unique. \textcolor{black}{SBa: Maybe we don't need the last line?}
\end{defn}

The next few subsections describe resolvability of the finite attribute, linear attachment setting and elaborate on some of the implications of this resolvability.  Unlike the introduction where we used $\tilde{\cG}_n$ for network from model class $\scrP$ and $\cG_n$ for a network from model class $\scrU$, to minimize notational overhead, we will use $\set{\cG_n:n\geq 0}$ for a generic sequence of growing networks; the driving model will always be clear from context.

\subsection{Local weak convergence for finite attribute models}

For the rest of this section, assume the state space $\cS = [K] := \set{1,2,\ldots, K}$ is a finite set. Let $\cP({\cg [K]})$ denote the space of all probability measures on ${\cg [K]}$; here this is just the $K$-dimensional simplex. For simplicity, we will often use matrix notation $\kappa = (\kappa_{a,b})_{a,b\in [K]}$ for the attractiveness kernel. Fix model class $\scrP(\gamma \equiv 1, \mvpi, \kappa)$.

\begin{ass}
	\label{ass:finite-case}
 Assume $\mvpi(\set{a})>0 ~ \forall a\in {\cg [K]}$ and $\kappa_{a,b} >0~ \forall a,b \in {\cg [K]}$. 
\end{ass}

Next define (in the interior of $\cP({\cg [K]})$) the function:
\begin{equation}
\label{eqn:vpi-def}
	 V_{\mvpi}(\vy):= 1-\frac{1}{2}\sum_{j\in {\cg [K]}} \pi_j\left(\log(y_j) + \log(\sum_{k\in {\cg [K]}} y_k \kappa_{k,j})\right).
\end{equation}

{\cg Writing $n_0 = |\cG_0|$ for the size of the initial graph, let $\tilde{Y}_a^{\sss(n)}:= {\sum_{v\in \cG_n: a(v) = a} \deg(v,n)}/{2(n+n_0)}$, $a\in [K]$, denote the proportion of degrees within each attribute in $\cG_n$.}
\begin{lemma}[{\cite[P7]{jordan2013geometric}}]\label{lem:minimizer}
	Under Assumption \ref{ass:finite-case}, $V_{\mvpi}(\cdot)$ has a \emph{unique} minimizer $\mveta:= \mveta(\mvpi) = (\eta_1(\mvpi), \ldots, \eta_K(\mvpi))$ in the interior of $\cP({\cg [K]})$. {\cg Moreover, as $n \rightarrow \infty$, $(\tilde{Y}_a^{\sss(n)}:a\in [K]) \convas \mveta(\mvpi)$.}
\end{lemma}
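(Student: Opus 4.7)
The plan is to establish that $V_{\mvpi}$ is a strictly convex function on the relative interior of $\cP(\cS)$ that tends to $+\infty$ on approach to the boundary, from which existence and uniqueness of the minimizer follow by standard convex analysis. Accordingly, I would decompose
\[
V_{\mvpi}(\vy) \;=\; 1 \;-\; \tfrac{1}{2}\sum_{j \in \cS}\pi_j \log y_j \;-\; \tfrac{1}{2}\sum_{j \in \cS}\pi_j \log\Bigl(\sum_{k \in \cS} y_k \kappa_{k,j}\Bigr)
\]
and analyze the two nonconstant pieces separately on the open set $\cP^\circ(\cS) := \{\vy \in (0,1)^K : \sum_j y_j = 1\}$.

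For strict convexity, the map $\vy \mapsto -\sum_j \pi_j \log y_j$ has Hessian $\operatorname{diag}(\pi_j/y_j^2)$ on the positive orthant, which, under Assumption \ref{ass:finite-case} ($\pi_j > 0$ for all $j$), is positive definite, and the associated quadratic form $\vh \mapsto \sum_j \pi_j h_j^2/y_j^2$ is strictly positive on the tangent space $\{\vh \in \bR^K : \sum_j h_j = 0\}$ of the simplex. Hence this term is strictly convex on $\cP^\circ(\cS)$. For the second piece, each $\vy \mapsto -\pi_j \log(\sum_k y_k \kappa_{k,j})$ is convex as the composition of $-\log$ with a linear function, so the second piece is convex. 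Adding a convex function to a strictly convex function preserves strict convexity, hence $V_{\mvpi}$ is strictly convex on $\cP^\circ(\cS)$.

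For coercivity at the boundary, suppose $\vy^{(n)} \in \cP(\cS)$ converges to $\vy^\star \in \partial \cP(\cS)$, so that $y_{j_0}^{(n)} \to 0$ for some $j_0 \in \cS$. Then $-\pi_{j_0} \log y_{j_0}^{(n)} \to +\infty$; the remaining terms $-\pi_i \log y_i^{(n)}$ are bounded below (by $0$, since $y_i^{(n)} \le 1$). Under Assumption \ref{ass:finite-case} ($\kappa_{k,j} > 0$ for all $k,j$), one has $\sum_k y_k^{(n)} \kappa_{k,j} \ge \min_{k} \kappa_{k,j} > 0$ uniformly in $n$, so each $-\pi_j \log(\sum_k y_k^{(n)} \kappa_{k,j})$ stays bounded below. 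Combining, $V_{\mvpi}(\vy^{(n)}) \to +\infty$. Extending $V_{\mvpi}$ by $+\infty$ on $\partial \cP(\cS)$ therefore yields a lower semicontinuous function on the compact set $\cP(\cS)$; it attains its infimum (existence), and coercivity forces the minimizer to lie in $\cP^\circ(\cS)$, where strict convexity enforces uniqueness. This identifies $\mveta(\mvpi)$.

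The argument is largely routine convex analysis; the only step requiring any care is verifying that strict convexity survives restriction to the affine simplex, which is immediate from the positive definiteness of the diagonal Hessian above. I do not expect a substantive obstacle here.
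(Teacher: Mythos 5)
Your argument is correct and is essentially the route the paper relies on: Lemma \ref{lem:minimizer} is quoted from \cite{jordan2013geometric}, and the paper's own proof of the non-tree analogue (Lemma \ref{lem:minimizernt}) rests on precisely the two facts you verify, namely strict convexity of $V_{\mvpi}$ on the relative interior and divergence on approach to the boundary. One small correction: to conclude that $-\pi_j\log\bigl(\sum_k y_k^{(n)}\kappa_{k,j}\bigr)$ is bounded \emph{below} you need the argument of the logarithm bounded \emph{above}, which holds trivially since $\sum_k y_k^{(n)}\kappa_{k,j}\le \max_{k,j}\kappa_{k,j}$ on the simplex; the lower bound $\ge\min_k\kappa_{k,j}$ you invoke controls the term in the wrong direction (it gives an upper, not a lower, bound on $-\log$).
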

{\cg We briefly describe how the above result arose in \cite{jordan2013geometric} in the context of analyzing the degree distribution of $\scrP$. It was observed in \cite{jordan2013geometric} that the process $(\tilde{Y}_a^{\sss(n)} : a \in [K])_{n \ge 1}$ evolves as a `stochastic approximation' scheme whose `drift' is given in terms of the gradient of the function $V_{\mvpi}$. From this (which can be seen as a `noisy gradient descent'), the result follows.}

%which elucidates the minimization problem above and described in more detail in the proof Section, it shown in \cite{jordan2013geometric} that $(\tilde{Y}_a^{\sss(n)}:a\in [K]) \probc \mveta(\mvpi)$ arising in Lemma \ref{lem:minimizer}.  } 
Next define,
\begin{equation}
\label{eqn:phi-def}
	\nu_b:= \frac{\pi_b}{\sum_{l=1}^K \kappa_{l,b}\eta_l}, \qquad \phi_{a,b}:= \kappa_{a,b}\nu_b, \qquad \phi_a := \sum_{b=1}^K \phi_{a,b} = 2- \frac{\pi_a}{\eta_a},
\end{equation}
where the final identity follows from \cite[Proposition 3.2]{jordan2013geometric}.

\begin{defn}
	\label{def:model-u-corr-p}
	Fix $\mvpi, \kappa$ satisfying Assumptions \ref{ass:finite-case} and construct $\mvnu$ as in \eqref{eqn:phi-def}. Using $\mvnu, \kappa$,  consider the model class $\scrU(1,\mvnu, \kappa)$ started with ancestor (root) of an arbitrary type $a^\prime \in [K]$, constructed using the branching process $\set{\BP_{a^\prime;(1,\mvnu, \kappa)}(t):t\geq 0}$ as in Definition \ref{defn:ctbp}. \textcolor{black}{For (possibly random) $A, T \ge 0$, we will denote by $\cL(\BP_{A;(1,\mvnu, \kappa)}(T))$ the law of the progeny tree of $\BP_{A;(1,\mvnu, \kappa)}(T)$ rooted at the ancestor.}
\end{defn}

\begin{theorem}[Local weak convergence for linear model]\label{thm:fringe-convergence}
	Fix finite attribute space $\cS = [K]$. Let $\gamma \equiv 1$ and fix $\mvpi, \kappa$ satisfying Assumptions \ref{ass:finite-case} and consider the sequence of networks  constructed as $\set{\cG_n}_{n\geq 0} \sim\scrP(1, \mvpi, \kappa)$. Then, as in Definition \ref{def:local-weak} \eqref{it:fringe-b}, 
	\[\cG_n \probcrf \cT_\infty,\]
	where $\cT_\infty$ is the unique {\tt sin}-tree with fringe distribution 
	\begin{equation}
	\label{eqn:fringe-limit}
		\fpm_{\infty}(\cdot) = \cL(\BP_{A;(1,\mvnu, \kappa)}(\tau)).
	\end{equation}
{\cg Here $\set{\BP_{A;(1,\mvnu, \kappa)}(t):t\geq 0}$, is a branching process as in Definition \ref{def:model-u-corr-p} with the type of the root $A\sim \mvpi$ and $\tau$ is an independent exponential random variable with rate $\lambda =2$. }
\end{theorem}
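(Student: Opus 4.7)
The plan is to first establish convergence in the (ordinary) fringe sense $\cG_n \probfr \fpm_\infty$, and then invoke Lemma \ref{ftoeflemma} to automatically upgrade to the claimed extended-fringe convergence $\cG_n \probcrf \cT_\infty$. To apply that lemma I need two ingredients: (a) that $\fpm_\infty$ in \eqref{eqn:fringe-limit} is a \emph{fringe distribution} in the sense of Definition \ref{fringedef}, and (b) the fringe convergence itself.

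For (a), I would exploit the CTBP structure underlying \eqref{eqn:fringe-limit}. By the strong Markov property of $\BP_{A;(1,\mvnu,\kappa)}$ together with the memoryless property of the exponential stopping time $\tau$ with rate $\lambda=2$, conditional on the root's type $A$ and on the offspring point process of the root having a birth of type $B$ before $\tau$, the residual lifetime after that birth is again $\operatorname{Exp}(2)$ and the subtree grown from that child is an independent copy of $\BP_{B;(1,\mvnu,\kappa)}$ stopped at an independent $\operatorname{Exp}(2)$ time. Putting this together with the identity $\phi_a = 2-\pi_a/\eta_a$ from \eqref{eqn:phi-def} (so that the Malthusian parameter of the reproduction kernel $\mvPhi=(\phi_{a,b})$ with stationary type distribution $\mveta$ is exactly $2$), a short computation yields the consistency equation $\sum_{\vs} \fpm_\infty(\vs)\vQ(\vs,\vt)=\fpm_\infty(\vt)$ for every $\vt\in\bbT_\cS$. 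This is the standard stable-age/Rudas-T\'oth-Valk\'o identity adapted to the present multi-type setting.

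For (b), the approach is stochastic approximation on the empirical fringe counts, following the blueprint of \cite{jordan2013geometric}. For each finite marked rooted tree $\vt\in\bbT_\cS$ set
\[
N_n(\vt):=\#\{v\in \cG_n: F_0(v,\cG_n)=\vt\}, \qquad n\ge 0.
\]
Writing down the one-step recursion for $N_{n+1}(\vt)-N_n(\vt)$, the only random quantity appearing in the conditional mean (given $\cG_n$ and $a(v_{n+1})=a^\star$) is the type-weighted total degree $D_n(b):=\sum_{v:a(v)=b}\deg(v,n)$ appearing in the denominator of \eqref{eqn:912old}. The key input, already established by a separate stochastic approximation in \cite{jordan2013geometric}, is that $D_n(b)/(2n) \convas \eta_b$ with $\mveta$ the minimizer from Lemma \ref{lem:minimizer}. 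Substituting this into the recursion, the asymptotic one-step dynamics of $N_n(\vt)/n$ are driven precisely by the offspring kernel $\phi_{a,b}/\phi_a$ of the $\scrU(1,\mvnu,\kappa)$ CTBP with $\mvnu$ given by \eqref{eqn:phi-def}. Hence the limiting ODE for $\{N_n(\vt)/n : \vt\in\bbT_\cS\}$ has $\fpm_\infty$ as an equilibrium; uniqueness of this equilibrium follows from the same CTBP fixed-point identity verified in step (a), and the martingale increments are bounded so that standard stochastic approximation (as in \cite{pemantle2007survey}) gives a.s.\ convergence.

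The main technical obstacle is the infinite-dimensional nature of the state space $\bbT_\cS$. I would resolve this by truncation: for each $R\ge 1$, restrict attention to the finite set $\bbT_\cS^{\le R}$ of marked rooted trees with at most $R$ vertices. On this finite-dimensional projection the stochastic approximation argument above applies cleanly and yields $N_n(\vt)/n\convas\fpm_\infty(\vt)$ for every $\vt\in\bbT_\cS^{\le R}$. To control the tail I would use the degree bound $\max_v\deg(v,n)=O(n^{1/2+o(1)})$ valid in the linear preferential attachment regime (combined with the fact that a fringe $F_0(v,\cG_n)$ of size $>R$ forces at least one ancestor path to accumulate many children), yielding $n^{-1}\sum_{\vt:\,|\vt|>R}N_n(\vt)\to 0$ in probability as $R\to\infty$ uniformly in $n$. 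Together with the matching tail bound $\sum_{|\vt|>R}\fpm_\infty(\vt)\to 0$ (immediate since the stopped CTBP has a.s.\ finite population), this upgrades pointwise convergence to weak convergence of $\fP_n^{0}$ to $\fpm_\infty$ on $\bbT_\cS$. Finally, Lemma \ref{ftoeflemma} translates fringe convergence into extended-fringe convergence $\cG_n\probcrf\cT_\infty$ where $\cT_\infty$ is the unique {\tt sin}-tree whose law is obtained from $\fpm_\infty$ via \eqref{ftoef}, completing the proof.
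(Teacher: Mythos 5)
Your overall architecture matches the paper's: reduce extended-fringe convergence to ordinary fringe convergence via Lemma \ref{ftoeflemma} after checking that $\fpm_\infty$ is a fringe distribution in the sense of Definition \ref{fringedef}, then run a stochastic approximation on the empirical fringe counts with the convergence $\tilde Y_b^{(n)}\convas\eta_b$ from \cite{jordan2013geometric} as the external input. The one genuine gap is in the identification of the stochastic-approximation limit with $\cL(\BP_{A;(1,\mvnu,\kappa)}(\tau))$. The equilibrium of the fringe-count dynamics is characterized by a ``delete the last-born leaf'' recursion (the paper's \eqref{eqn:recur-first-part}), expressing $\fpm(\vt)$ through $\fpm(\vt^{\sss(v)})$ for trees with one fewer vertex, with explicit boundary conditions for singletons; unique solvability of that recursion is immediate by induction on $|\vt|$, and the nontrivial step is showing the stopped-CTBP law actually satisfies it. You assert that this ``follows from the same CTBP fixed-point identity verified in step (a)'', but step (a) verifies the $\vQ$-stationarity $\sum_{\vs}\fpm_\infty(\vs)\vQ(\vs,\vt)=\fpm_\infty(\vt)$, which is a root-decomposition identity, not the leaf-removal recursion, and does not imply it. The paper closes this via Proposition \ref{prop:prop-of-fp}: an explicit formula \eqref{eqn:pm-def} for $\pr(\BP(\tau)=\vt)$ as a sum over historical orderings, from which both the recursion and the boundary values are read off. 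Your memoryless-exponential computation is the right kind of argument but is aimed at the wrong identity; it would have to be redone for the last-born-leaf decomposition (or replaced by the historical-ordering calculation).

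Two further points, neither fatal. The truncation machinery is unnecessary: the one-step recursion for $N_n(\vt)$ closes on trees with exactly one fewer vertex, so a tree-by-tree induction on $|\vt|$ combined with a scalar stochastic approximation lemma (the paper's Lemma \ref{lem:jordan}) suffices; and since for finite $\cS$ the space of finite marked rooted trees is countable and discrete, almost sure pointwise convergence of the empirical fringe pmf to the probability measure $\fpm_\infty$ already yields weak convergence by Scheff\'e, so the maximal-degree tail bound buys you nothing. Finally, the fringe-distribution property in step (a) can be checked more cheaply than by verifying the full consistency equation: the mean number of children of the root under $\fpm_\infty$ equals $\sum_{a,b}\pi_a\vM_{a,b}=1$ by Proposition \ref{prop:left-eigen}, and the paper then invokes the criterion of \cite{aldous-fringe}.
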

{\cg
\begin{rem}
\begin{enumeratea}
\item The local weak limit of the preferential attachment model with one type has been previously established as the law of a Yule process stopped at an independent $\Exp(2)$ time \cite{rudas2007random}. The above result shows that, although the associated branching process in the multi-type case has a more involved description, the law of the independent stopping time is still $\Exp(2)$. In our approach (very different from that of \cite{rudas2007random}), $\fpm_\infty$ in \eqref{eqn:fringe-limit} arises as the unique distribution that satisfies a certain recursive form (see Prop. \ref{prop:prop-of-fp}(b)) obtained from the stochastic evolution equations of $\set{\cG_n}_{n\geq 0} \sim\scrP(1, \mvpi, \kappa)$. 
%While slightly tricky at this stage, it behoves us to explain the origin of the final assertion $\lambda=2$ when all other functionals depend on $\kappa, \mvpi$. There are two ways to see this, the first in hindsight, the second that emerged in trying to prove these results:
%\begin{inparaenuma}
%\item One implication of a distribution $\fpm$ on $\bT_{[K]}$ being a fringe distribution (see Def. \ref{fringedef}), is that for a random finite tree $\cT \sim \fpm $, the expected degree of the root is one (\cite[Eqn 8]{aldous-fringe}). Proposition \ref{prop:left-eigen} below implies that, for $\tau \sim \exp(\lambda)$, the choice $\lambda=2$ is the unique choice of the rate for root of $\BP_{A;(1,\mvnu, \kappa)}(\tau)$ to have degree one. 
%\item The way we discovered this is that, with this choice of $\lambda$, the corresponding distribution $\fpm_\infty$ in \eqref{eqn:fringe-limit} is the unique distribution that satisfies a specific recursive form (see Prop. \ref{prop:prop-of-fp}(b)) which arose when understanding the stochastic evolution equations of the fringe distribution of  $\set{\cG_n}_{n\geq 0} \sim\scrP(1, \mvpi, \kappa)$ and which plays a key role in the proof. 
%\end{inparaenuma} 
\item The proof of Theorem \ref{thm:fringe-convergence} can be adapted to show that the network model $\scrU(1,\mvnu, \kappa)$ constructed in Definition \ref{def:model-u-corr-p} has the same local weak limit described in the Theorem. In particular, the attribute proportions in this model class approach $\mvpi$ as the network size grows, and hence the network model $\scrP(1, \mvpi, \kappa)$ is \emph{resolvable} in the sense of Definition \ref{def:resolv}. Thus, the above result provides more evidence towards the meta-principle that resolvability is key to model classes $\scrP$ and $\scrU$ having the same local weak limits.
\end{enumeratea}
\end{rem}
}
\textcolor{black}{For the $K=1$ (single attribute) case, the local limit was obtained before by \cite{rudas2007random} building on ideas from \cite{nerman1981convergence,jagers1984growth}.} {\cg The next section describes explicit asymptotics of various local functionals that can be derived from the above theorem. In passing, we describe how the above local weak limit implies even convergence of global functions. Let $\set{\cG_n}_{n\geq 0} \sim\scrP(1, \mvpi, \kappa)$ as above and let $\vA_n$ denote the adjacency matrix of $\cG_n$, $\set{\lambda_i^{\sss(n)}:1\leq i\leq n}$ denote the eigen-values and $\hat{\mu}_n = n^{-1} \sum_{i=1}^n \delta_{\lambda_i^{\sss(n)}}$ denote the empirical spectral distribution where $\delta$ denotes the Dirac delta function. 

\begin{theorem}
\label{thm:rand-adj}
Let $\set{\cG_n}_{n\geq 0} \sim\scrP(1, \mvpi, \kappa)$ satisfying the assumptions in Theorem \ref{thm:fringe-convergence}. Then there exists a deterministic distribution $\mu_\infty$ (whose specific form depends on the parameters $\mvpi, \kappa$) such that $\hat{\mu}_n \convd \mu_\infty$. The limit distribution has an infinite set of atoms in $\bR$.
\end{theorem}

The proof of this result follows directly by combining the extended fringe convergence in Theorem \ref{thm:fringe-convergence} with \cite[Theorem 4.1]{bhamidi2012spectra}. 
}

\subsection{Asymptotics for degree distribution, PageRank scores and homophily measures}
In this section, we will describe the implications of Theorem \ref{thm:fringe-convergence} for asymptotics of various functionals of interest. Fix model $\scrP(1, \mvpi, \kappa)$ and consider the corresponding model $\scrU(1, \mvnu, \kappa)$ as in Definition \ref{def:model-u-corr-p} using the quantities defined in \eqref{eqn:phi-def}. Consider the matrix, 
\begin{equation}
	\label{eqn:matrix-def}
	\vM = \left(\vM_{a,b}:=\frac{\phi_{a,b}}{2 - \phi_a}\right)_{a, b\in [K]}.
\end{equation} 
\textcolor{black}{For $a,b \in [K]$, it can be checked that $\vM_{(a,b)}$ corresponds to the expected number of type $b$ children by a type $a$ ancestor (root) in the branching process $\cL(\BP_{A;(1,\mvnu, \kappa)}(T))$ constructed in Definition \ref{def:model-u-corr-p}, with $T = \tau \sim \Exp(2)$ independent of the branching process. Matrices analogous to $\vM$, which are more generally obtained via Laplace transforms of the intensity measures of the associated reproduction processes, are known to play a key role in asymptotics of multi-type branching processes \cite{jagers1996asymptotic,jagers1989general}. These Laplace transforms are evaluated at the \emph{Malthusian rate of growth} which quantifies the rate of exponential growth of the total population size of the branching process. In particular, the Perron-Frobenius eigen-value is one and the associated (normalized) left eigen-vector gives the asymptotic proportions of vertices of different types in the population. The following Proposition will be a crucial ingredient in our analysis.}
 
\begin{prop}
	\label{prop:left-eigen}
	The matrix {\cg $\vM$ in \eqref{eqn:matrix-def}} has a unique Perron-Frobenius eigen-value $\lambda_{(\PF,1)}(\vM)\equiv 1$. Further the attribute density $\mvpi$ is the left eigen-vector of $\vM$ corresponding to $\lambda_{(\PF,1)}(\vM)\equiv 1$. 
\end{prop}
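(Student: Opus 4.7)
The plan is to reduce the claim to a direct computation using the identities already collected in \eqref{eqn:phi-def}, and then to invoke the Perron--Frobenius theorem for strictly positive matrices to upgrade the computation to uniqueness.

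First I would simplify the matrix $\vM$. From the last equality in \eqref{eqn:phi-def}, one gets immediately
\[
2-\phi_a \;=\; \frac{\pi_a}{\eta_a},
\]
which is strictly positive because Lemma \ref{lem:minimizer} places $\mveta$ in the interior of $\cP(\cS)$ (so the entries of $\vM$ are well defined and positive). Substituting $\phi_{a,b}=\kappa_{a,b}\nu_b$ then yields the clean form
\[
\vM_{a,b} \;=\; \frac{\kappa_{a,b}\,\nu_b\,\eta_a}{\pi_a}.
\]

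Second, I would check that $\mvpi$ is a left eigenvector with eigenvalue $1$. The definition $\nu_b=\pi_b/\sum_{l}\kappa_{l,b}\eta_l$ rewrites as $\sum_{l}\kappa_{l,b}\eta_l = \pi_b/\nu_b$, and hence
\[
\sum_{a\in[K]} \pi_a\,\vM_{a,b} \;=\; \sum_{a\in[K]} \pi_a\cdot \frac{\kappa_{a,b}\,\nu_b\,\eta_a}{\pi_a} \;=\; \nu_b \sum_{a\in[K]} \kappa_{a,b}\,\eta_a \;=\; \nu_b\cdot\frac{\pi_b}{\nu_b} \;=\; \pi_b,
\]
so $\mvpi \vM = \mvpi$, i.e. $\mvpi$ is a left eigenvector of $\vM$ with eigenvalue $1$.

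Finally, to identify this eigenvalue with $\lambda_{(\PF,1)}(\vM)$ and justify the uniqueness claim, I would invoke the Perron--Frobenius theorem: under Assumption \ref{ass:finite-case} all of $\kappa_{a,b}$, $\nu_b$, $\eta_a$, $\pi_a$ are strictly positive, so $\vM$ is an entrywise positive $K\times K$ matrix. The Perron--Frobenius theorem therefore guarantees a simple, strictly positive leading eigenvalue, whose associated left eigenvector is, up to positive scaling, the unique left eigenvector with all positive entries. Since $\mvpi$ is strictly positive (again Assumption \ref{ass:finite-case}) and satisfies $\mvpi\vM=\mvpi$, it must be this Perron--Frobenius eigenvector, and the corresponding Perron--Frobenius eigenvalue is $1$. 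No step here looks delicate; the only thing that needs care is recording that $2-\phi_a>0$ and that all entries of $\vM$ lie in $(0,\infty)$, so that Perron--Frobenius applies cleanly.
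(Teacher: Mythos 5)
Your proof is correct and follows essentially the same route as the paper: substitute the identity $2-\phi_a=\pi_a/\eta_a$ from \eqref{eqn:phi-def} to simplify the entries of $\vM$, verify $\mvpi\vM=\mvpi$ by the same one-line computation, and conclude via the Perron--Frobenius theorem for strictly positive matrices. No gaps.
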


{\cg \begin{proof}
	 Since $\vM$ has strictly positive entries, by Perron-Frobenius theorem it is enough to show that $\mvpi$ is a left eigen-vector with eigen-value one. By the last identity in \eqref{eqn:phi-def}, for any $a\in [K]$, $2-\phi_a = \pi_a/\eta_a$. Thus for any $b\in [K]$,
	\begin{align}
		(\mvpi \vM)_b = \sum_{a\in [K]} \pi_a \vM_{a,b} =\sum_{a\in [K]} \pi_a \frac{\eta_a}{\pi_a} \frac{\kappa_{a,b}\pi_b}{\sum_{l\in [K]} \kappa_{l,b} \eta_l} = \pi_b \notag. 
	\end{align}
	\end{proof}
}
Now recall the empirical pmfs $\vp_n^a$ for vertices of each attribute type and homophily and heterophily statistics $\set{D_{n,a}:a\in [K]}$ and $\set{H_{n, (a,a^\prime)}: a\neq a^\prime \in [K]}$ as defined in Section \ref{sec:func-interest}. 
\begin{theorem}\label{thm:deg-homophil-linear}
	Assume the model $\scrP(1, \mvpi,\kappa)$ satisfies Assumptions \ref{ass:finite-case}.
	\begin{enumeratea}
		\item  \textcolor{black}{For each $a\in [K]$, $\vp_n^a \convas \vp_\infty^a$ where 
	\begin{equation}
		\label{eqn:pinf-def}
		\vp^a_\infty(k) = \frac{2}{\phi_a}\frac{\Gamma\left(1+\frac{2}{\phi_a}\right) \Gamma(k)}{\Gamma\left(k+1+\frac{2}{\phi_a}\right)}, \qquad k\geq 1.  
	\end{equation}}
	In particular $\vp_\infty^a(k)\sim k^{-(1+2/\phi_a)}$ as $k\to\infty$.
	\item The homophily and heterophily statistics $\set{D_{n,a}: a\in [K]}$ and $\set{H_{n, (a,a^\prime)}: a\neq a^\prime \in [K]}$ satisfy the asymptotics, 
	\[D_{n,a} \convas \frac{\vM_{(a,a)}}{\pi_a}, \qquad H_{n, (a,a^\prime)} \convas \frac{1}{2}\left[\frac{\vM_{(a^\prime,a)}}{\pi_a} + \frac{\vM_{(a, a^\prime)}}{\pi_{a^\prime}} \right]. \]
	\end{enumeratea}
	  
\end{theorem}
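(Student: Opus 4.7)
The plan is to deduce both parts from the local weak limit description in Theorem~\ref{thm:fringe-convergence}: every sufficiently local functional averaged over the vertices of $\cG_n$ converges almost surely to its expectation under the fringe law $\fpm_\infty = \cL(\BP_{A;(1,\mvnu,\kappa)}(\tau))$ with $A \sim \mvpi$ and $\tau \sim \Exp(2)$ independent.

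For part (a), fix an attribute $a$ and work conditionally on the root of the fringe having type $a$. With $\gamma = 1$, \eqref{eqn:rate} and \eqref{eqn:phi-def} imply that the root's offspring process $\set{N_a(t):t\ge 0}$ is a pure birth process with rate $\phi_a(k+1)$ at state $k$, i.e., a shifted Yule process with parameter $\phi_a$. This yields the closed form $\pr(N_a(t)=k) = e^{-\phi_a t}(1-e^{-\phi_a t})^k$. Integrating against the density of $\tau \sim \Exp(2)$ and applying the change of variable $u = e^{-\phi_a t}$ reduces the computation to the Beta integral
\[
\int_0^1 u^{2/\phi_a}(1-u)^k\, du \;=\; \frac{\Gamma(2/\phi_a + 1)\,\Gamma(k+1)}{\Gamma(k+2+2/\phi_a)},
\]
and produces the explicit distribution of $N_a(\tau)$. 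Since every non-root vertex in the {\tt sin}-tree has degree one plus its number of children, $\vp_\infty^a(k) = \pr(N_a(\tau) = k-1)$, which simplifies to \eqref{eqn:pinf-def}. Combining Theorem~\ref{thm:fringe-convergence} applied to the bounded local function $\ind\set{a(v)=a,\,\deg(v)=k}$ with the LLN $|\cV_{n,a}|/n \convas \pi_a$ (from iid sampling of attributes in model $\scrP$) then yields $\vp_n^a \convas \vp_\infty^a$.

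For part (b), the tree structure gives $|\cE_n| = n-1 + O(1)$, hence $p_n \sim 2/n$, and $|\cV_{n,a}|/n \convas \pi_a$. The key reduction exploits the parent-child bijection between non-root vertices and edges: for each $w$ and each type $b$, the number of type-$b$ children of $w$ is a local functional of the fringe at $w$, so
\[
\frac{1}{n}\sum_{w:\,a(w)=a}\#\set{\text{type-}b\text{ children of }w} \;\convas\; \pi_a\,\vM_{a,b},
\]
where $\vM_{a,b} = \phi_{a,b}/(2-\phi_a)$ is computed as $\E[N_{a\to b}(\tau)]$ by thinning the shifted Yule process by type probability $\phi_{a,b}/\phi_a$ and then integrating against $\tau \sim \Exp(2)$; this also verifies the identification in \eqref{eqn:matrix-def}. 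Summing over the two possible parent-child orientations gives $|\cE_{n,aa}|/n \convas \pi_a \vM_{a,a}$ and, for $a\ne a'$, $|\cE_{n,aa'}|/n \convas \pi_a\vM_{a,a'} + \pi_{a'}\vM_{a',a}$. Substituting these into the definitions of $D_{n,a}$ and $H_{n,(a,a')}$ in Section~\ref{sec:func-interest} and combining with the asymptotics of $p_n$ and $|\cV_{n,a}|$ produces the stated limits after routine cancellation.

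The main technical point is the uniform integrability upgrade required because the number of children is unbounded whereas Theorem~\ref{thm:fringe-convergence} directly provides only bounded-local-functional convergence. This is resolved essentially for free from the tree constraint $n^{-1}\sum_v \deg(v) = 2(n-1)/n \to 2$: applying Theorem~\ref{thm:fringe-convergence} to the truncations $\deg\wedge K$ yields $n^{-1}\sum_v(\deg(v)\wedge K) \convas \E[\deg_\infty \wedge K] \uparrow 2$ as $K\to\infty$, where the limiting mean degree equals $1 + \sum_a \pi_a \phi_a/(2-\phi_a) = 2$ using $\phi_a = 2-\pi_a/\eta_a$ and $\sum_a \eta_a = 1$ (consistent with $\mvpi$ being the left Perron eigenvector from Proposition~\ref{prop:left-eigen}). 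Telescoping then gives $\lim_{K}\limsup_n n^{-1}\sum_v (\deg(v)-K)_+ = 0$, and since $\#\set{\text{children of }w} \le \deg(w)$, this licenses the empirical convergence used above and completes the argument.
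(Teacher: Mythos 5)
Your proposal is correct and follows essentially the same route as the paper: both parts are read off from the local weak limit of Theorem \ref{thm:fringe-convergence}, with the degree law computed as the offspring count of a rate-$\phi_a$ Yule process at an independent $\Exp(2)$ time (your Beta-integral evaluation and the paper's competing-exponentials product give the identical formula), and the edge counts obtained from the expected number of type-$b$ children of a type-$a$ root, which is exactly $\vM_{a,b}$. Your uniform-integrability step, leveraging the deterministic identity $n^{-1}\sum_v \deg(v)\to 2$ to upgrade from bounded local functionals to the empirical mean number of children, is a valid (and slightly more careful) justification of a point the paper's proof leaves implicit.
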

\begin{rem}
	The degree distribution asymptotics in part (a) above was previously derived using stochastic analytic techniques in \cite{jordan2013geometric}. One goal of this paper is to show how such techniques can be extended to yield local weak convergence of the entire network as described in Theorem \ref{thm:fringe-convergence}. This leads to asymptotics for more complex functionals as discussed below.  
\end{rem}

The next result shows that one can read off not just degree distribution asymptotics but also limit information for functionals which require connectivity information beyond one-step neighborhoods of vertices. Recall the PageRank scores $\set{\fR_{v,c}(n): v\in \cG_n}$ in Definition \ref{def:page-rank}. Recall that $\cG_n$ is directed with edges from child to parent.  For any $v\in \cG_n$ and $l \in \mathbb{N}$, let $P_l(v,n)$ denote the number of \emph{directed} paths of length $l$ that end at $v$ in $\cG_n$. Since $\cG_n$ is a directed tree, it is easy to check that the PageRank scores have the following explicit formulae for any vertex $v$: 
\begin{equation}
	\label{eqn:non-root-page-rank}
	\fR_{v,c}(n)=\frac{(1-c)}{n}\left(1 + \sum_{l=1}^\infty c^l P_l(v,n)\right).
\end{equation}

For the sequel, it will be easier to formulate results in terms of the \emph{graph normalized} PageRank scores \cite{garavaglia2020local} $\set{R_{v,c}(n):v\in \cG_n} := \set{n\fR_{v,c}(n): v\in \cG_n}$. Define the empirical distribution of normalized PageRank scores, 
$$\hat{\mu}_{n, \PR} := n^{-1} \sum_{v\in \cG_n} \delta\set{R_{v,c}(n)}.$$   
General results on the implication of local weak convergence of sparse graphs on the convergence of the empirical distribution of PageRank scores were derived in \cite{garavaglia2020local,banerjee2021pagerank}. In particular, the local weak convergence in Theorem \ref{thm:fringe-convergence} coupled with \cite{garavaglia2020local,banerjee2021pagerank} leads to the following result. Fix $a \in [K]$ and consider the branching process $\BP_{a;(1,\mvnu, \kappa)}(\cdot)$ as in Definition \ref{def:model-u-corr-p} starting with initial type $a$ and root denoted by $\emptyset$. For fixed $t\geq 0$ and $l \in \mathbb{N}$, let $P_{l,\emptyset}(t)$ denote the number of directed paths of length $l$ that end at the root in $\BP_{a;(1,\mvnu, \kappa)}(t)$ (i.e. the number of descendants at level $l$). For any $t\geq 0$ define, 
\begin{equation}
\label{eqn:t-power}
	\cR_{\emptyset, c}(t) = (1-c)\left(1 + \sum_{l=1}^{\infty} c^l P_{l,\emptyset}(t)\right).
\end{equation}
Motivated by Theorem \ref{thm:fringe-convergence}, let $\tau\sim \Exp(2)$ independent of $\BP_{a;(1,\mvnu, \kappa)}(\cdot)$ and write $\pr_a$ (resp. $\E_a$) for the distribution of the random finite rooted tree  $\BP_{a;(1,\mvnu, \kappa)}(\tau)$ (resp. expectation under $\pr_a$) and define the \emph{normalized} PageRank score at $\emptyset$ as,
\begin{equation}\label{lwlpr}
    \cR_{\emptyset, c} = \cR_{\emptyset, c}(\tau)=  (1-c)\left(1 + \sum_{l=1}^{\infty} c^l P_{l,\emptyset}(\tau)\right).
\end{equation}

\begin{theorem}[PageRank asymptotics]\label{thm:page-rank}
   Suppose $\set{\cG_n:n\geq 0}\sim \scrP(1,\mvpi,\kappa)$  satisfying Assumption \ref{ass:finite-case}. Fix $a\in [K]$ and consider the random variable $\cR_{\emptyset, c}$ defined in \eqref{lwlpr}. Then: 
   \begin{enumeratea}
   	\item For any $a\in [K]$, the random variable $\cR_{\emptyset, c}$ is finite a.s.
	\item For every continuity point $r$ of the distribution of $\cR_{\emptyset, c}$ under $\pr_a$, 
    \[n^{-1}\sum_{v\in \cG_n} \ind\set{a(v) = a, R_{v,c}(n) > r} \convas \pi_{a}\pr_a(\cR_{\emptyset, c} > r).  \]
   \end{enumeratea}
    
\end{theorem}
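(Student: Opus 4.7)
For part (a), the random variable $\cR_{\emptyset, c}$ is a power series in $c$ whose coefficients $P_{l,\emptyset}(\tau)$ count level-$l$ descendants of the root in $\BP_{a;(1,\mvnu,\kappa)}(\tau)$. By Lemma \ref{lem:embedding-ok}, the branching process does not explode, so $\BP_{a;(1,\mvnu,\kappa)}(\tau)$ is a.s.\ a finite tree for any fixed $\tau<\infty$, and hence also for the independent $\tau\sim\Exp(2)$. Consequently $P_{l,\emptyset}(\tau)=0$ for all $l$ exceeding the (a.s.\ finite) height, the series terminates, and $\cR_{\emptyset,c}<\infty$ almost surely.

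For part (b), my plan is to combine the extended-fringe convergence of Theorem \ref{thm:fringe-convergence} with the general machinery of \cite{garavaglia2020local, banerjee2021pagerank}, which deduces convergence of the empirical distribution of normalized Page-rank scores from local weak convergence of sparse directed graphs. The starting observation is that $R_{v,c}(n)$ depends only on the descendant subtree $f_0(v,\cG_n)$ of $v$ (equivalently, on the in-neighborhood $U_{\infty}(v)$ in the directed-graph sense of Section \ref{lwcnt}). The depth-$L$ truncation
\begin{equation*}
R^{(L)}_{v,c}(n) := (1-c)\left(1 + \sum_{l=1}^{L} c^l P_l(v,n)\right)
\end{equation*}
is a bounded continuous functional of $B(f_0(v,\cG_n), L)$ in the fringe topology. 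Theorem \ref{thm:fringe-convergence}, together with the joint continuity of the root mark (continuous because $\cS$ is finite and discrete) and the portmanteau lemma, then yields, for every continuity point $r$ of the distribution of $\cR^{(L)}_{\emptyset,c}$ under $\pr_a$,
\begin{equation*}
n^{-1}\sum_{v \in \cG_n} \ind\set{a(v) = a,\ R^{(L)}_{v,c}(n) > r} \convas \pi_a\,\pr_a\!\left(\cR^{(L)}_{\emptyset,c} > r\right),
\end{equation*}
with the $\pi_a$ factor reflecting the a.s.\ limit of attribute proportions (resolvability, cf.\ \eqref{eqn:913}).

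To remove the truncation, the key estimate I would establish is
\begin{equation*}
\lim_{L\to\infty}\limsup_{n\to\infty} n^{-1}\sum_{v\in \cG_n} \bigl[R_{v,c}(n) - R^{(L)}_{v,c}(n)\bigr] = 0 \quad \text{a.s.,}
\end{equation*}
which by Fubini amounts to controlling $\sum_{l>L} c^l \cdot n^{-1}\sum_v P_l(v,n)$. The inner normalized count of length-$l$ directed paths in $\cG_n$ is analyzed through the CTBP embedding of $\scrU(1,\mvnu,\kappa)$ from Definition \ref{defn:ctbp}: expected level-$l$ descendant counts are governed by $l$-th iterates of the mean-offspring matrix $\vM$ in \eqref{eqn:matrix-def}, and Proposition \ref{prop:left-eigen} gives Perron-Frobenius eigenvalue exactly $1$, so $n^{-1}\sum_v P_l(v,n)$ grows at most polynomially in $l$, making $\sum_{l>L}c^l\cdot\mathrm{poly}(l)$ exponentially small in $L$. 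A Markov inequality then converts this bound into the required tightness, and a diagonal passage $L\to\infty$ concludes the argument. The \textbf{main obstacle} is precisely this non-local character of Page-rank: arbitrarily deep descendants contribute non-trivially and must be controlled uniformly in $n$. The CTBP embedding, unavailable for the native $\scrP$ dynamics but valid for $\scrU$, is what enables the moment estimates, and resolvability transfers them to $\scrP$ through the shared local weak limit identified in Theorem \ref{thm:fringe-convergence}.
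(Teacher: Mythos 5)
Your proposal is correct and follows essentially the same route as the paper: the paper's proof of Theorem \ref{thm:page-rank} simply invokes the local weak convergence of Theorem \ref{thm:fringe-convergence} together with the truncation machinery of \cite{garavaglia2020local,banerjee2021pagerank}, which is exactly the depth-$L$ truncation argument you spell out (part (a) likewise follows from non-explosion as you say). One simplification worth noting: since $\cG_n$ is a tree with edges oriented child to parent, every vertex has at most one ancestor at distance $l$, so $\sum_{v} P_l(v,n) \le |V(\cG_n)|$ deterministically and the tail $\sum_{l>L}c^l\, n^{-1}\sum_v P_l(v,n) \le 2c^{L+1}/(1-c)$ requires no Perron--Frobenius input --- indeed the iterates of $\vM$ govern the limiting CTBP rather than the finite graphs, so the deterministic count is also the cleaner way to obtain the uniformity in $n$ that your argument needs.
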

The next result derives explicit properties of the limiting PageRank distribution for the specific models considered in this paper. We first need some notation. Fix damping factor $c\in (0,1)$ and define the matrix
\begin{equation}
\label{eqn:matrix-mc-def}
	\vM^{\sss(c)} = \left(\vM_{a,b}^{\sss(c)}:= c\phi_{a,b} + \phi_a\ind\set{a=b} \right)_{a, b\in [K]}.
\end{equation}
{\cg Recall the definition of the matrix $\vM$ in \eqref{eqn:matrix-def} and the discussion below this equation describing the interpretation of $\vM$ as the mean matrix of the associated multi-type branching process run for a random exponential rate two time. In the proofs below, the matrix $\vM^{\sss(c)}$ will have a similiar interpretation, but now for a percolated version of the branching process (see Def. \ref{def:perc-bp})}

The next result quantifies tail asymptotics, expectations and some laws of large numbers associated with the limiting PageRank distribution in Theorem \ref{thm:page-rank}.

\begin{theorem}\label{thm:page-rank-tails}
	Consider the setting of Theorem \ref{thm:page-rank}. Let $\lambda_c$ denote the Perron-Frobenius eigen-value of the matrix $\vM^{\sss(c)}$ defined in \eqref{eqn:matrix-mc-def}.  Then:
	\begin{enumeratea}
		\item There exist finite constants $0< B_1\le B_2< \infty$ such that for all $a\in [K]$, the limiting PageRank distribution of vertices of type $a$, i.e. the distribution of $\cR_{\emptyset, c}$ under $\pr_a$, satisfies:
		\[B_1 r^{-2/\lambda_c} \leq \pr_a(\cR_{\emptyset, c} > r) \leq B_2 r^{-2/\lambda_c}.\]
		\item Further for any $a\in [K]$ and $n\geq 1$ define the strings of attributes of length $n+1$ starting with type $a$ attribute:
		\[\scI_n^{\sss(a)} = \set{\vj:= (j_0, j_1, \ldots, j_n) \in [K]^n: j_0 =a}.\]
		Then, 
		\begin{equation}
		\label{eqn:expec-page-rank}
			\E_a(\cR_{\emptyset, c})  = (1-c)\left[1+\sum_{n=1}^\infty c^n \sum_{\vj\in \scI_n^{\sss(a)}} \prod_{l=0}^{n-1} \left(\frac{\phi_{j_j, j_{l+1}}}{2-\phi_{j_l}}\right) \right].
		\end{equation}
		\item $\sum_{a\in [K]} \pi_a \E_a(\cR_{\emptyset, c}) =1$. Further, 
		$n^{-1} \sum_{v\in \cG_n} R_{v,c}(n) \convas 1$,
		and for each $a\in [K]$,
		\[n^{-1} \sum_{v\in \cG_n} R_{v,c}(n)\ind\set{a(v) =a} \convas \pi_a \E_a(\cR_{\emptyset, c}).   \]
	\end{enumeratea}
	 
\end{theorem}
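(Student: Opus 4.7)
The proof plan splits by part: (b) is direct, (c) follows from (b) and Theorem~\ref{thm:page-rank} via uniform integrability from (a), and (a) is the main technical challenge.

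\emph{For (b):} By Fubini, $\E_a[\cR_{\emptyset,c}] = (1-c)\sum_{l\geq 0} c^l \E_a[P_{l,\emptyset}(\tau)]$, so it suffices to compute $\E_a[P_{l,\emptyset}(\tau)]$. The offspring point process of a type $a$ individual in $\BP_{a;(1,\mvnu,\kappa)}(\cdot)$ is Markovian pure birth whose cumulative count plus one evolves as a Yule process of rate $\phi_a$; hence the expected number of type $b$ offspring by deterministic age $t$ is $(\phi_{a,b}/\phi_a)(e^{\phi_a t}-1)$, and integrating against $\tau\sim\Exp(2)$ using $\E[e^{\phi_a\tau}]=2/(2-\phi_a)$ recovers $\vM_{(a,b)}=\phi_{a,b}/(2-\phi_a)$. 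For $l\geq 2$, the key observation is that conditional on a type $b$ child of the root being born at time $\sigma\leq\tau$, the residual time $\tau-\sigma$ is again exponential with rate $2$ by memorylessness, and the subtree below that child is distributed as an independent copy of $\BP_{b;(1,\mvnu,\kappa)}(\cdot)$ stopped at this fresh exponential time. Induction on $l$ gives $\E_a[P_{l,\emptyset}(\tau)] = \sum_{\vj\in\scI_l^{(a)}}\prod_{i=0}^{l-1}\vM_{(j_i,j_{i+1})}$, and \eqref{eqn:expec-page-rank} follows.

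\emph{For (c):} The first identity is obtained by summing the formula in (b) against $\mvpi$ and invoking the left eigenvector relation $\mvpi^T\vM=\mvpi^T$ from Proposition~\ref{prop:left-eigen}, giving $\sum_a\pi_a\E_a[\cR_{\emptyset,c}] = (1-c)\sum_{l\geq 0}c^l\mvpi^T\vone = 1$. A direct calculation from \eqref{eqn:non-root-page-rank} shows $\sum_v R_{v,c}(n) = n - c n G_n(c)$, where $G_n(c) := n^{-1}\sum_v c^{d(v)}$ and $d(v)$ is the depth of $v$ in $\cG_n$; Theorem~\ref{thm:fringe-convergence} implies that the fraction of vertices at any fixed depth tends a.s.\ to zero (the limiting {\tt sin}-tree has an infinite spine and thus places no mass on configurations with an empty truncated-path entry), so $G_n(c)\convas 0$ and hence $n^{-1}\sum_v R_{v,c}(n)\convas 1$. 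For the attribute-specific convergence, I would combine the tail representation $n^{-1}\sum_v R_{v,c}(n)\ind\{a(v)=a\} = \int_0^\infty n^{-1}\sum_v \ind\{a(v)=a, R_{v,c}(n)>r\}dr$ with the pointwise a.s.\ convergence of tails from Theorem~\ref{thm:page-rank} and dominated convergence; the integrable envelope is provided by the tail bound in (a) transferred to finite $n$ via tightness of the local weak convergence.

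\emph{For (a), the main obstacle:} The strategy is to identify the exponential growth rate of $\cR_{\emptyset,c}(t)$ at deterministic $t$ and then integrate over $\tau\sim\Exp(2)$. The Laplace transform of the mean offspring measure is explicitly $\hat\mu(\theta)_{(a,b)}=\phi_{a,b}/(\theta-\phi_a)$, so $\vM=\hat\mu(2)$; a Schur complement calculation shows that $\det(I-c\hat\mu(\theta))=0$ iff $\theta$ is an eigenvalue of $\vM^{\sss(c)}$. Consequently $\lambda_c$ is precisely the Malthusian parameter for the level-discounted functional, and $\E_a[\cR_{\emptyset,c}(t)]\asymp e^{\lambda_c t}$. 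For the upper bound, I plan to construct a Biggins-type multiplicative martingale $W_{a,c}(t):=\cR_{\emptyset,c}(t)/(\vv_c(a)e^{\lambda_c t})$ (with $\vv_c$ the right Perron-Frobenius eigenvector of $\vM^{\sss(c)}$), establish $L^p$-boundedness for some $p>1$ near the $L^1$-endpoint, deduce $\E_a[\cR_{\emptyset,c}(t)^p]\leq Ce^{p\lambda_c t}$, and apply Markov's inequality combined with $\pr(\tau>t)=e^{-2t}$ to obtain $\pr_a(\cR_{\emptyset,c}>r)\leq B_2 r^{-2/\lambda_c}$. The matching lower bound uses a.s.\ convergence $W_{a,c}(t)\convas W_{a,c}^\infty$ with $\pr_a(W_{a,c}^\infty>\varepsilon)\geq\delta>0$, combined with $\pr(\tau\in[(\log r)/\lambda_c,(\log r)/\lambda_c+1])\asymp r^{-2/\lambda_c}$. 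The principal difficulty will be the construction and $L^p$-analysis of this Biggins-type martingale for the level-discounted functional on the multi-type CTBP; I plan to adapt the classical Biggins-Lyons-Kesten-Stigum framework to the Perron-Frobenius structure of $\vM^{\sss(c)}$, with non-degeneracy of $W_{a,c}^\infty$ under $\pr_a$ being the most delicate step.
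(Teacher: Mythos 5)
Your treatment of part (b) is correct and takes a genuinely cleaner route than the paper: the paper expands $\E_a(\cR_{\emptyset,c})$ through a Markov renewal process built from the semi-Markov kernel $\mu^{*,c}(a,b;\cdot)$ and an independent $\Exp(2-\lambda_c)$ horizon, whereas your first-generation decomposition plus memorylessness of $\tau$ gives $\E_a[P_{l,\emptyset}(\tau)]=\sum_{\vj}\prod_i \vM_{(j_i,j_{i+1})}$ directly by induction; both yield \eqref{eqn:expec-page-rank}. Your exact identity $\sum_v R_{v,c}(n)=n-cnG_n(c)$ for the total Page-rank mass is also a valid (and arguably sharper) substitute for the paper's one-sided bound plus Fatou.

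The genuine gap is in part (a). The object $W_{a,c}(t):=\cR_{\emptyset,c}(t)/(\vv_c(a)e^{\lambda_c t})$ is \emph{not} a martingale: writing $\cA$ for the generator, $\cA\bigl[\sum_l c^l P_{l,\emptyset}(t)\bigr]=\sum_{v}c^{d(v)+1}\phi_{a(v)}\deg(v,t)$, which involves the degrees of the vertices and is not a linear function of the level counts $P_{l,\emptyset}(t)$, because the offspring intensities are degree-dependent (Yule-type). The correct additive martingale must weight each contributing vertex by its residual reproduction intensity, i.e.\ its degree. This is exactly what the paper does: it realizes $\cR_{\emptyset,c}(t)=(1-c)\E_a(\scZ_a^c(t)\mid\BP_a(t))$ via Bernoulli$(c)$ percolation of edges, and works with $U(t)=\sum_b h_bY_b(t)$ where $Y_b(t)$ sums the \emph{degrees} of type-$b$ vertices in the percolated cluster and $\vh$ is the right PF eigenvector of $\vM^{\sss(c)}$; then $e^{-\lambda_c t}U(t)$ has constant mean and an inductive generator computation gives all moments of $e^{-\lambda_c t}\scZ_a^c(t)$ uniformly in $t$, which dominates $\cR_{\emptyset,c}(t)$. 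Without identifying this degree-weighted functional your $L^p$ analysis has no object to act on. Two further points: (i) for the lower bound, the assertion $\E_a[\cR_{\emptyset,c}(t)]\asymp e^{\lambda_c t}$ does not follow from the characteristic equation $\det(I-c\hat\mu(\lambda_c))=0$ alone; one needs a Markov renewal theorem (the paper invokes Nummelin's results for the positive recurrent semi-Markov kernel) to get $\lim_t e^{-\lambda_c t}\E_a[\scZ_a^c(t)]>0$, after which Paley--Zygmund with the second moment suffices --- this is strictly easier than proving non-degeneracy of a Kesten--Stigum limit, which your plan flags as the hardest step but which is not actually needed. (ii) In part (c), the attribute-specific limit cannot be obtained by dominated convergence with an ``envelope transferred to finite $n$ via tightness'': local weak convergence gives no uniform tail control on the finite-$n$ empirical Page-rank distributions. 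The paper instead applies Fatou to get $\liminf_n\E[Y_n^a\mid\cG_n]\ge\pi_a\E_a(\cR_{\emptyset,c})$ for each $a$ and then uses the already-established convergence of the total mass to $1=\sum_a\pi_a\E_a(\cR_{\emptyset,c})$ to force equality coordinatewise; you should replace your envelope argument by this squeeze.
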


We describe qualitative properties of $\lambda_c$ and then discuss the implications of Theorem \ref{thm:page-rank-tails}. 

\begin{prop}
	\label{prop:lc-bound}
	For any $c\in (0,1)$, $\lambda_c < 2$. Further, \textcolor{black}{$\lambda_c > \max_{a\in [K]} \phi_a$ and}
	\begin{equation}
	\label{eqn:lambc-bounds}
		(1+c) \min_{a\in [K]} \phi_a\leq \lambda_c \leq (1+c) \max_{a\in [K]} \phi_a. 
	\end{equation}
\end{prop}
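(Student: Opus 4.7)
My plan is to prove the three claims in turn by combining standard Perron-Frobenius arguments with the explicit form of $\mvnu$ from \eqref{eqn:phi-def}. Under Assumption \ref{ass:finite-case}, the matrix $\vM^{\sss(c)}$ has all positive entries, so it admits a strictly positive Perron right eigenvector $\vv$ corresponding to the simple real eigenvalue $\lambda_c$. A direct calculation shows that the row sums of $\vM^{\sss(c)}$ equal $c\phi_a + \phi_a = (1+c)\phi_a$. Inspecting the eigenvector equation $\sum_b \vM^{\sss(c)}_{a,b} v_b = \lambda_c v_a$ at the indices where $v_a$ is maximal and minimal yields the standard row-sum sandwich $(1+c)\min_a \phi_a \leq \lambda_c \leq (1+c)\max_a \phi_a$, establishing \eqref{eqn:lambc-bounds}.

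For the strict separation $\lambda_c > \max_a \phi_a$, I would rewrite the same eigenvector equation as $(\lambda_c - \phi_a) v_a = c\sum_b \phi_{a,b} v_b$. The right-hand side is strictly positive by Assumption \ref{ass:finite-case} (all $\phi_{a,b}>0$) together with strict positivity of $\vv$, forcing $\lambda_c > \phi_a$ for every $a$.

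The \emph{main obstacle} is the upper bound $\lambda_c < 2$, since the row-sum estimate above only gives $\lambda_c \leq (1+c)\max_a \phi_a$, which can exceed $2$ whenever $\phi_a$ and $c$ are both close to the upper end of their ranges. The plan here is to weight the eigenvector equation by $\eta_a$ from Lemma \ref{lem:minimizer} and sum over $a$. Two algebraic inputs do the work: $(i)$ the identity $\sum_a \eta_a \phi_{a,b} = \pi_b$, which follows immediately from the definition $\nu_b = \pi_b/\sum_l \kappa_{l,b} \eta_l$ in \eqref{eqn:phi-def} (and is essentially the content of Proposition \ref{prop:left-eigen} rewritten using $\eta_a = \pi_a/(2-\phi_a)$); and $(ii)$ the identity $\eta_a \phi_a = 2\eta_a - \pi_a$, a direct consequence of $\phi_a = 2 - \pi_a/\eta_a$. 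Collecting terms after the weighted summation yields
\[(2-\lambda_c)\sum_a \eta_a v_a = (1-c)\sum_a \pi_a v_a.\]
The right-hand side is strictly positive since $c<1$ and $\mvpi, \mveta, \vv$ all have strictly positive entries, so $\lambda_c < 2$ follows at once.
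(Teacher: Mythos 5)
Your proposal is correct, and two of its three parts (the row-sum sandwich for \eqref{eqn:lambc-bounds} and the strict bound $\lambda_c>\max_a\phi_a$ read off from $(\lambda_c-\phi_a)v_a=c\sum_b\phi_{a,b}v_b>0$) coincide with the paper's argument. Where you genuinely diverge is the bound $\lambda_c<2$. The paper cites the strict monotonicity of the Perron--Frobenius eigenvalue in the matrix entries (\cite[Theorem 2.7]{noutsos2006perron}), notes $\vM^{\sss(c)}<\vM^{\sss(1)}$ entrywise, and uses that the PF eigenvalue of $\vM^{\sss(1)}$ equals $2$ (equivalent to $\vM$ having PF eigenvalue $1$, from Proposition \ref{prop:left-eigen}). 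You instead pair the right eigenvector $\vv$ of $\vM^{\sss(c)}$ against the weights $\eta_a$; your two algebraic inputs $\sum_a\eta_a\phi_{a,b}=\pi_b$ and $\eta_a\phi_a=2\eta_a-\pi_a$ both check out directly from \eqref{eqn:phi-def}, and they amount to the observation that $\mveta$ is a left eigenvector of $\vM^{\sss(1)}$ with eigenvalue $2$, so that $\mveta^{T}\vM^{\sss(c)}\vv=2\,\mveta^{T}\vv-(1-c)\sum_a\pi_a v_a=\lambda_c\,\mveta^{T}\vv$. Since $\mveta$ lies in the interior of the simplex (Lemma \ref{lem:minimizer}) and $\mvpi,\vv$ are strictly positive, the conclusion $\lambda_c<2$ follows. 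Your route is more self-contained (no external PF-monotonicity citation) and even quantitative, giving the explicit gap $2-\lambda_c=(1-c)\sum_a\pi_a v_a/\sum_a\eta_a v_a$; the paper's route is shorter modulo the reference and generalizes immediately to any entrywise-dominated perturbation.
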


\begin{rem} We now give some qualitative and quantitative implications of the above result:
	\begin{enumeratei}
		\item Part (a) of Theorem \ref{thm:page-rank-tails} implies that the tail exponent of the limiting PageRank distribution of vertices of a given attribute does {\bf not} depend on the attribute type; this should be contrasted with the result on the asymptotic degree distribution (Theorem \ref{thm:deg-homophil-linear}) where tails of the degree distribution depend on the attribute under consideration. This result thus gives information on the extremal behavior of the PageRank scores and their (lack of) dependence on the attribute type.
		\item Since the tail exponent of the limiting PageRank distribution is insensitive to the attribute type, parts (b) and (c) of Theorem \ref{thm:page-rank-tails} quantify the {\bf average (bulk)} behavior of the PageRank distribution and its dependence on the attribute type. 
		\item Further refined properties of the ``bulk'' behavior of PageRank scores (i.e. non-extremal behavior) and their dependence on attribute type, leveraging the technical tools developed in proving (b) above, is left for future work.  
	\end{enumeratei}
	
\end{rem}

The following Proposition gives a more succinct representation of the expected limiting PageRank in \eqref{eqn:expec-page-rank} in terms of an associated Markov chain. This representation (for PageRank as well as other centrality measures) will be used to quantify and compare associated sampling schemes in Section \ref{fairsec}. {\cg Recall the matrix $\vM$ from \eqref{eqn:matrix-def} which has Perron-Frobenius eigen-value one. Let $\mvPsi = (\Psi_1,\Psi_2, \ldots, \Psi_K)$ denote the corresponding right eigen-vector (thus for all $i\in [K]$, $\sum_{j} \vM_{i,j} \Psi_j = \Psi_i$), normalized so that $\sum_{a\in [K]} \pi_a \Psi_a =1$. }
\begin{prop}
	\label{prop:mc-descp}
		 	Consider the Markov chain $\vS:= \set{S_n:n\geq 0}$ on $[K]$ with transition probability matrix 
	\[\pr^\vS_i(S_1 =j):= \pr^{\vS}(S_1 = j|S_0 = i) = \frac{\vM_{i,j} \Psi_j}{\Psi_i}, \qquad i,j\in [K].\]
	Write $\E^{\vS}_i$ for the expectation operator under $\pr_i^{\vS}$. Consider the setting of Theorem \ref{thm:page-rank-tails}. Then
	\begin{equation}
	\label{eqn:211}
		\E_a(\cR_{\emptyset, c})  = \Psi_a \E^{\vS}_a\left[\frac{1}{\Psi_{S_N}}\right],
	\end{equation}
	where $N\sim \Geom(1-c) - 1$ independent of $\vS$. 
\end{prop}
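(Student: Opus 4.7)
The plan is to recognize the definition of $\vS$ as a Doob $h$-transform of $\vM$ by its right Perron eigenvector $\mvPsi$, and then to show that the resulting representation matches the series in \eqref{eqn:expec-page-rank} essentially term by term. First I would verify that the transition probabilities $p^{\vS}_{i,j}:=\vM_{i,j}\Psi_j/\Psi_i$ define a genuine stochastic matrix: summing over $j$ and using $\vM\mvPsi = \mvPsi$ (from Proposition \ref{prop:left-eigen} applied to the transpose / Perron--Frobenius theory) yields $\sum_j p^{\vS}_{i,j}=\Psi_i/\Psi_i=1$, and positivity is clear from Assumption \ref{ass:finite-case} which makes all entries $\phi_{a,b}$ strictly positive and $\mvPsi$ strictly positive coordinate-wise.

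Next I would rewrite \eqref{eqn:expec-page-rank} compactly. Since $\vM_{j_l,j_{l+1}}=\phi_{j_l,j_{l+1}}/(2-\phi_{j_l})$, the inner product becomes $\prod_{l=0}^{n-1}\vM_{j_l,j_{l+1}}$, and summing over all $\vj\in\scI_n^{(a)}$ gives $\sum_{b\in[K]}(\vM^n)_{a,b}$. Hence
\begin{equation}\label{eqn:plan-series}
\E_a(\cR_{\emptyset,c})=(1-c)\sum_{n=0}^\infty c^n\sum_{b\in[K]}(\vM^n)_{a,b},
\end{equation}
with the $n=0$ term equal to $1$ coming from $\vM^0=\vI$.

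The heart of the argument is the telescoping identity for the path law of $\vS$. For any trajectory $(j_0=a,j_1,\ldots,j_n)$,
\[
\pr^{\vS}_a(S_0=j_0,\ldots,S_n=j_n)=\prod_{l=0}^{n-1}\frac{\vM_{j_l,j_{l+1}}\Psi_{j_{l+1}}}{\Psi_{j_l}}=\frac{\Psi_{j_n}}{\Psi_a}\prod_{l=0}^{n-1}\vM_{j_l,j_{l+1}}.
\]
Multiplying by $1/\Psi_{j_n}$ and summing over $(j_1,\ldots,j_n)\in[K]^n$ I obtain
\[
\E^{\vS}_a\!\left[\frac{1}{\Psi_{S_n}}\right]=\frac{1}{\Psi_a}\sum_{b\in[K]}(\vM^n)_{a,b}.
\]
Finally I would average over $N\sim\Geom(1-c)-1$, independent of $\vS$, so that $\pr(N=n)=(1-c)c^n$ for $n\ge 0$. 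Using Tonelli (all summands nonnegative; the series converges by Proposition \ref{prop:lc-bound} / Theorem \ref{thm:page-rank-tails}(c)) I get
\[
\Psi_a\,\E^{\vS}_a\!\left[\frac{1}{\Psi_{S_N}}\right]=\Psi_a\sum_{n=0}^\infty(1-c)c^n\E^{\vS}_a\!\left[\frac{1}{\Psi_{S_n}}\right]=(1-c)\sum_{n=0}^\infty c^n\sum_{b\in[K]}(\vM^n)_{a,b},
\]
which coincides with \eqref{eqn:plan-series} and proves \eqref{eqn:211}.

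There is no real obstacle here: the proof is essentially a bookkeeping exercise once the $h$-transform structure is recognized. The only nontrivial ingredient is that $\mvPsi$ has strictly positive coordinates so that the transition probabilities are well defined; this follows from Perron--Frobenius applied to $\vM$ (which is entrywise positive under Assumption \ref{ass:finite-case}), combined with the existence/uniqueness of the right eigenvector asserted in the statement.
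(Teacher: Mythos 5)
Your proposal is correct and takes essentially the same route as the paper: the paper's proof consists of the single remark that Proposition \ref{prop:mc-descp} ``is just a reformulation of Theorem \ref{thm:page-rank-tails}(b)'', and your argument (rewriting \eqref{eqn:expec-page-rank} as $(1-c)\sum_{n\ge 0}c^n\sum_b(\vM^n)_{a,b}$, telescoping the $h$-transformed path weights, and averaging over the geometric $N$) is precisely that reformulation written out in detail. The only minor remark is that your appeal to Proposition \ref{prop:lc-bound}/Theorem \ref{thm:page-rank-tails}(c) for convergence is unnecessary, since $\E^{\vS}_a[1/\Psi_{S_n}]\le 1/\min_b\Psi_b$ uniformly in $n$ makes the geometric series converge trivially.
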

{\cg
Since the proofs of these two Propositions are short, we give them here. 
\begin{proof}[\bf Proof of Propositions \ref{prop:lc-bound} and \ref{prop:mc-descp}]
By \cite[Theorem 2.7]{noutsos2006perron}, the Perron-Frobenius (PF) eigen-value is strictly increasing as a function of the matrix entires. Since for $c\in (0,1)$ $\vM^{\sss(c)} < \vM^{\sss(1)}$ (entry-wise domination), and further the PF eigenvalue of $\vM^{\sss(1)}$ is $2$ (see the proof of Prop. \ref{prop:left-eigen}), we have $\lambda_c <2$. 

By the characterization of $\mvh$ as the (strictly positive) right eigen-vector of the matrix $\vM^{\sss(c)}$, we have for each fixed $a$,
\[\sum_{b\in [K]} c \phi_{ab} h_b + \phi_a h_a = \lambda_c h_a.\]
Thus $\lambda_c > \max_{a\in [K]} \phi_a$. Further, since the Perron-Frobenius eigen-value is bounded between the minimum and maximum row sums, this yields \eqref{eqn:lambc-bounds}. This completes the proof of Proposition \ref{prop:lc-bound}. {\cg Finally,} Proposition \ref{prop:mc-descp} is just a reformulation of Theorem \ref{thm:page-rank-tails}(b). 
\end{proof}
}

\textcolor{black}{If there is only one type ($K=1$), or if $K \ge 2$ and all the entries in each column of the matrix $\kappa$ are the same (all rows identical), it is straightforward to check from \eqref{eqn:vpi-def} and \eqref{eqn:phi-def} that $\phi_a = 1$ for all $a \in [K]$. In this case, $\phi_{a,b} = \pi_b$ for all $a,b \in [K]$ and hence $\lambda_c = 1 + c$ for any $c \in (0,1)$ and $\mvPsi = (1,1,\dots,1)$. The above results then take a particularly simple form.} 

\textcolor{black}{\begin{corollary}
	Consider the case when $K=1$, or when $K \ge 2$ and all the entries in each column of the matrix $\kappa$ are the same (all rows identical). Then, there exist finite constants $0< B_1\le B_2< \infty$ such that for all $a\in [K]$,
	\[B_1 r^{-2/(1+c)} \leq \pr_a(\cR_{\emptyset, c} > r) \leq B_2 r^{-2/(1+c)}\]
	and 
	$\E_a(\cR_{\emptyset, c}) =1.$
\end{corollary}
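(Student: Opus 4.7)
The plan is to reduce the corollary to a direct application of Theorem \ref{thm:page-rank-tails}(a) (for the tail bound) and Proposition \ref{prop:mc-descp} or equation \eqref{eqn:expec-page-rank} (for the expectation), by first establishing the structural simplification asserted in the paragraph preceding the corollary: under the stated hypotheses one has $\eta_a = \pi_a$, $\phi_a = 1$, and $\phi_{a,b} = \pi_b$ for all $a,b \in [K]$. The case $K=1$ is trivial since $\eta_1 = \pi_1 = 1$. For the case in which every row of $\kappa$ equals some common row $(\kappa_1,\dots,\kappa_K)$, I would observe that in \eqref{eqn:vpi-def} the inner sum becomes $\sum_k y_k \kappa_{k,j} = \kappa_j$ whenever $\vy \in \cP(\cS)$, so
\begin{equation*}
V_{\mvpi}(\vy) = 1 - \tfrac{1}{2}\sum_j \pi_j \log \kappa_j - \tfrac{1}{2}\sum_j \pi_j \log y_j,
\end{equation*}
and the minimizer on the simplex is found by a one-line Lagrange multiplier computation to be $\eta_j = \pi_j$. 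Substituting into \eqref{eqn:phi-def} then gives $\nu_b = \pi_b/\kappa_b$, hence $\phi_{a,b} = \pi_b$ and $\phi_a = \sum_b \pi_b = 1$, as required.

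Next, for the Perron--Frobenius eigenvalue $\lambda_c$ of $\vM^{\sss(c)}$ in \eqref{eqn:matrix-mc-def}, substituting $\phi_a = 1$ and $\phi_{a,b} = \pi_b$ yields $\vM^{\sss(c)}_{a,b} = c\pi_b + \ind\{a=b\}$, i.e.\ $\vM^{\sss(c)} = I + c\, \vone\,\mvpi^\top$ where $\vone$ is the all-ones column vector and $\mvpi$ is the column probability vector. The rank-one perturbation $\vone\,\mvpi^\top$ has eigenvalues $\mvpi^\top \vone = 1$ (with eigenvector $\vone$) and $0$ with multiplicity $K-1$, so the eigenvalues of $\vM^{\sss(c)}$ are $1+c$ and $1$ (multiplicity $K-1$). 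Since $1+c$ is the largest, $\lambda_c = 1+c$. Feeding this into Theorem \ref{thm:page-rank-tails}(a) immediately produces the claimed two-sided tail bound.

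For the expectation, I would invoke Proposition \ref{prop:mc-descp}. Under our simplification $\vM_{a,b} = \phi_{a,b}/(2 - \phi_a) = \pi_b$, and one checks instantly that $\mvPsi = (1,1,\dots,1)$ is the normalized right Perron eigenvector, since $\sum_b \vM_{a,b} \cdot 1 = 1$ and $\sum_a \pi_a \cdot 1 = 1$. Formula \eqref{eqn:211} then collapses to $\E_a(\cR_{\emptyset,c}) = 1 \cdot \E_a^{\vS}[1/1] = 1$. As a consistency check, this can also be read off directly from \eqref{eqn:expec-page-rank}: the product $\prod_{l=0}^{n-1}\phi_{j_l,j_{l+1}}/(2 - \phi_{j_l}) = \prod_{l=0}^{n-1}\pi_{j_{l+1}}$ sums over $\vj \in \scI_n^{\sss(a)}$ to $1$, so $\E_a(\cR_{\emptyset,c}) = (1-c)\sum_{n\ge 0}c^n = 1$.

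There is no real obstacle here beyond the small computation identifying the minimizer of $V_{\mvpi}$ and recognizing the rank-one structure of $\vM^{\sss(c)} - I$; once those are in hand the corollary is an immediate specialization of the preceding two results.
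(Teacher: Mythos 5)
Your proposal is correct and follows exactly the route the paper intends: verify $\eta=\mvpi$, hence $\phi_a=1$, $\phi_{a,b}=\pi_b$, $\lambda_c=1+c$ and $\mvPsi=\vone$, then specialize Theorem \ref{thm:page-rank-tails}(a) and Proposition \ref{prop:mc-descp} (the paper compresses these computations into the remark "it is straightforward to check" preceding the corollary). Your Lagrange-multiplier and rank-one-perturbation calculations supply precisely the details the paper omits, and they are all accurate.
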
}

\textcolor{black}{The tail exponent of the limiting PageRank distribution in the one type case was previously obtained, among other things, in \cite{banerjee2021pagerank}.}

\subsection{Asymptotics for global functionals}
Much of the previous discussion dealt with ``local'' functionals and showing how local neighborhoods of the $\scrP$ model class can be approximated by the $\scrU$ model class. The goal of this section is to show that similar asymptotics hold even for global functionals such as the maximal degree. The main result shows (in a weak sense) that for any fixed attribute $a \in [K]$, the maximal degree scales like $n^{\phi_a/2}$.

\begin{theorem}\label{thm:max-degree}
	{\cg With attribute space $[K]$}, consider the model $\scrP(1,\mvnu, \kappa)$ satisfying Assumptions \ref{ass:finite-case}. Let ${\cg \fM_n^a}$ denote the maximal degree of vertices of attribute type $a$ while ${\cg \fF_n^{a}}$ denote the degree at time $n$ of the first vertex of type $a$ born into $\set{\cG_n:n\geq 0}$ \textcolor{black}{(setting ${\cg \fF_n^a=0}$ if no type $a$ vertex exists in $\cG_n$)}. Then with $\phi_a$ as in \eqref{eqn:phi-def},  
	\begin{enumeratea}
\item There exists a random $\set{C_n:n\geq 0}$ sequence of normalizing constants and a non-negative \textcolor{black}{non-degenerate} random variable $W_a$ so that ${\cg \fF_n^{a}}/C_n \to W_a~$a.s and in $\bL^2$. Further $C_n \approx n^{\phi_a/2}$ as $n\to \infty$ in the sense that 
\[\log{C_n} - \frac{\phi_a}{2} \log{n} = o_{\pr}(\log{n}).\]
\item For any fixed $\eps >0$, 
\[\frac{{\cg \fM_n^a}}{n^{\frac{(\phi_a+\eps)}{2}}} \probc 0, \qquad \mbox{ as } n\to\infty. \]
	\end{enumeratea}
\end{theorem}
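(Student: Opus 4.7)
The plan is to leverage the CTBP embedding of the $\scrU(1, \mvnu, \kappa)$ model from Definition \ref{def:model-u-corr-p} (which is equivalent to $\scrP(1,\mvpi,\kappa)$ in a strong enough sense for these arguments, as the proof of Theorem \ref{thm:fringe-convergence} will indicate). The key observation is that in the CTBP construction, an individual of type $r$ produces offspring at total rate $\phi_r (D(t) + 1)$ where $D(t)$ is its current offspring count, so $D(t) + 1$ is a Yule process with rate $\phi_r$ starting from $1$. By Proposition \ref{prop:left-eigen} the Perron eigenvalue of $\vM$ equals $1$, which via the $\Exp(2)$ randomization in Theorem \ref{thm:fringe-convergence} identifies the Malthusian parameter of the embedding CTBP as $\lambda = 2$. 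In particular the stopping time $T_n$ at which the population first reaches $n+1$ satisfies $T_n = (\log n)/2 + O_\pr(1)$ by classical Nerman--Jagers limit theory.

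For part (a), let $\sigma_a^{\mathrm{first}}$ denote the (a.s.\ finite) birth time of the first type-$a$ vertex in the CTBP, and let $D_a(t)$ be its offspring counting process for $t \ge \sigma_a^{\mathrm{first}}$. Classical Yule process theory yields $e^{-\phi_a(t-\sigma_a^{\mathrm{first}})}(D_a(t)+1) \to W_a$ a.s.\ and in $L^p$ for all $p \ge 1$, with $W_a \sim \Exp(1)$. Setting $C_n := e^{\phi_a(T_n-\sigma_a^{\mathrm{first}})}$ gives $F_n^a/C_n \to W_a$ a.s.\ and in $L^2$. The bound $\log C_n - (\phi_a/2)\log n = \phi_a(T_n - (\log n)/2) - \phi_a \sigma_a^{\mathrm{first}} = O_\pr(1) = o_\pr(\log n)$ follows from the concentration of $T_n$ established above. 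To bypass the transfer issue for $\scrP$, one can work directly in $\scrP$ using resolvability: the sums $S_b(n) := \sum_{v:a(v)=b}\deg(v,n)$ satisfy $S_b(n)/(2n) \to \eta_b$ a.s.\ by stochastic approximation, so the conditional probability of a new edge attaching to a fixed type-$a$ vertex $v$ is $(1+o_\pr(1))(\phi_a/(2n))\deg(v,n)$. Then $M_v(n) := \deg(v,n)/\prod_{k=\tau_v}^{n-1}(1+\phi_a/(2k))$ is an approximate martingale, bounded in $L^2$, and the Gamma-ratio denominator grows like $n^{\phi_a/2}$.

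For part (b), apply the moment method. Pick an integer $k$ with $k\phi_a > 2$. Using the CTBP functional-analytic formula,
\begin{equation*}
\E\Big[\sum_{v:a(v)=a} \deg(v,T_n)^k\Big] \;\lesssim\; \int_0^{T_n} \E[Y_{\phi_a}(T_n - s)^k]\, \pi_a \cdot 2 e^{2s}\, ds \;\lesssim\; e^{k\phi_a T_n} \int_0^{T_n} e^{(2-k\phi_a) s}\, ds,
\end{equation*}
where $Y_{\phi_a}(s)$ is the Yule process with rate $\phi_a$ satisfying $\E[Y_{\phi_a}(s)^k] \le C_k e^{k\phi_a s}$, and $2 \pi_a e^{2s}\,ds$ is the asymptotic birth intensity of type-$a$ vertices. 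Since $k\phi_a > 2$ the integral is $O(1)$, so the whole quantity is $\lesssim e^{k\phi_a T_n} \asymp n^{k\phi_a/2}$. Markov's inequality then yields $\pr(M_n^a > n^{(\phi_a+\eps)/2}) \le C n^{-k\eps/2} \to 0$. Once again, to work inside $\scrP$ directly, one replaces the CTBP computation with a moment recursion: showing that the quantities $S_b^{(k)}(n) := \sum_{v:a(v)=b} \deg(v,n)^k$ satisfy a recursion that, combined with the stabilized attribute proportions, yields $\E[S_a^{(k)}(n)] = O(n^{\max(1, k\phi_a/2)})$.

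The main obstacle I expect is part (b), specifically the uniform control over exponentially many dependent degree processes. The Yule-process calculation is clean in the branching-process setting but less transparent in $\scrP$, where the martingale $M_v(n)$ for different vertices are genuinely coupled via the denominator in \eqref{eqn:912old}. Carefully tracking this coupling will require estimating the concentration of the weighted sums $\sum_{v'} \kappa(a(v'),a^\star)\deg(v',n)$ around their deterministic limits with small enough error, and setting up a joint moment recursion for $S_a^{(k)}(n)$ that closes. The transition between the random CTBP time $T_n$ and the deterministic index $n$, and the passage from the $\scrU$ model to $\scrP$ for this non-local functional, are the most delicate steps of the argument.
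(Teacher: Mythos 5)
Your overall strategy --- an $L^2$-bounded martingale for the degree of a fixed vertex in (a), and a $k$-th moment bound with $k\phi_a>2$ for (b) --- coincides with the paper's, but both of your routes to executing it have a genuine gap. First, the CTBP framing is not available for $\scrP(1,\mvpi,\kappa)$: only the $\scrU$ model embeds into a continuous-time branching process, and the equivalence supplied by Theorem \ref{thm:fringe-convergence} is local weak convergence, which controls fringes of uniformly chosen vertices but gives no control on the maximal degree, a global functional. You flag this transfer as ``delicate'' but never close it, so the Yule-process computation of $\E\big[\sum_v \deg(v,T_n)^k\big]$ does not apply to the model in the statement. The paper's proof works entirely inside $\scrP$.

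Second, in your direct-in-$\scrP$ argument for (a) you normalize by the deterministic product $\prod_{k}(1+\phi_a/(2k))$, which makes $M_v(n)$ only an ``approximate martingale.'' To extract a.s.\ convergence to a nondegenerate limit from that you would need a rate of convergence for the adapted attachment rates $\Upsilon_t^{\sss(a)}\to\phi_a$ (essentially $\sum_t |\Upsilon_t^{\sss(a)}-\phi_a|/t<\infty$), which is not provided by the stochastic approximation limit \eqref{eqn:8522}. The paper sidesteps this entirely by normalizing with the \emph{adapted random} product $C_{m,n}^{\sss(a)}=\prod_{t=m}^{n-1}\big(1+\Upsilon_t^{\sss(a)}/(2(t+1))\big)$, so that $\deg(m,\cG_n)/C_{m,n}^{\sss(a)}$ is an exact $L^2$-bounded martingale (Lemma \ref{lem:mart-degree-evolution}); this is precisely why the theorem allows $C_n$ to be random and asserts only $\log C_n - (\phi_a/2)\log n = o_{\pr}(\log n)$. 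The same device resolves the coupling issue you raise in (b): the paper's $k$-th moment object is the rising factorial $\binom{\deg(m,\cG_n)+k-1}{k}$ normalized by $\prod_t\big(1+k\Upsilon_t^{\sss(a)}/(2(t+1))\big)$ (Lemma \ref{lem:mart-k}), the maximum is bounded by the sum over vertices, and the randomness of $\Upsilon_t^{\sss(a)}$ is controlled by the stopping time $T_B=\inf\{t\ge B:\Upsilon_t^{\sss(a)}\ge\phi_a+\eps_1\}$ together with optional sampling and $\pr(T_B<\infty)\to 0$ as $B\to\infty$. Replacing your deterministic normalizers with these adapted ones turns your outline into the paper's proof.
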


\begin{conj}
There exists a strictly positive finite a.s. random variable $W_a$ such that, 
	\[\frac{{\cg \fM_n^a}}{n^{\phi_a/2}} \convas W_a.  \]
\end{conj}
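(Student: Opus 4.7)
The natural plan is to work within the CTBP construction of $\scrU(1,\mvnu,\kappa)$ from Definition \ref{defn:ctbp}, with transfer to $\scrP(1,\mvpi,\kappa)$ to be addressed via the resolvability principle underlying Theorem \ref{thm:fringe-convergence}. Fix a type-$a$ vertex $v$ born at time $\tau_v$ in the embedded CTBP. Integrating the intensity in \eqref{eqn:rate} over $s\in\cS$ and setting $\gamma=1$ shows that the aggregate reproduction rate of $v$ equals $\phi_a$ times (its degree plus an $O(1)$ constant), so the degree process of $v$ is a Yule process of rate $\phi_a$. Hence there exist independent $\Exp(1)$ random variables $\{Y_v^a\}$, independent of the CTBP skeleton, such that $\deg(v,t)\,e^{-\phi_a(t-\tau_v)} \convas Y_v^a$ as $t\to\infty$.

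Proposition \ref{prop:left-eigen} identifies the Malthusian parameter as $\alpha=2$, since this is the unique $\alpha>0$ for which $(\phi_{a,b}/(\alpha-\phi_a))_{a,b}$ has Perron-Frobenius eigen-value one. Standard Nerman-Jagers theory then furnishes a positive finite random variable $V$ with $V>0$ a.s. such that $|\BP(t)|\,e^{-2t}\convas V$; since $\mvpi$ is the left Perron-Frobenius eigen-vector of $\vM$, the count $L^a(t)$ of type-$a$ individuals also satisfies $L^a(t)\,e^{-2t}\convas \pi_a V$. Inverting $|\BP(T_n)|=n+1$ gives $T_n - \tfrac{1}{2}\log n \convas -\tfrac{1}{2}\log V$, and ordering the type-$a$ birth times $\tau_1^a<\tau_2^a<\cdots$ yields $e^{2\tau_k^a}\sim k/(\pi_a V)$ a.s.

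These asymptotics suggest the candidate limit $W_a := V^{-\phi_a/2}\,W_a^*$, where $W_a^* := \sup_{k\ge 1} Y_k^a\,e^{-\phi_a \tau_k^a}$. Up to constants, the $k$-th rescaled contribution behaves like $Y_k^a\,k^{-\phi_a/2}$; since $\pr(Y_k^a > \epsilon\, k^{\phi_a/2}) = e^{-\epsilon\, k^{\phi_a/2}}$ is summable for any $\epsilon>0$, Borel-Cantelli yields $W_a^*<\infty$ a.s., and trivially $W_a^* \ge Y_1^a\, e^{-\phi_a\tau_1^a} > 0$ a.s. Setting $t=T_n$ and using $e^{\phi_a T_n}/n^{\phi_a/2}\convas V^{-\phi_a/2}$, the conjecture reduces to establishing the sup-convergence
\[
M^a(t)\,e^{-\phi_a t} \convas W_a^* \quad \text{as } t\to\infty.
\]

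The main obstacle is exactly this last display: upgrading pointwise Yule martingale convergence for individual vertices to a uniform almost-sure statement over the growing pool of type-$a$ vertices. The $\liminf$ bound is immediate by truncation to finitely many early vertices, but the $\limsup$ is delicate, since in principle a late-born vertex could transiently produce an anomalously large rescaled degree. The natural strategy is to split the type-$a$ pool into \emph{old} vertices born by a fixed large time $t_0$ (finitely many, handled by the pointwise Yule limit) and \emph{new} vertices born after $t_0$, then apply Doob's $L^p$ maximal inequality to each Yule martingale separately and union over new vertices using independence of the Yule limits given the CTBP skeleton together with their exponential tails. A secondary obstacle, if Theorem \ref{thm:max-degree} is read as a statement about $\scrP$ rather than $\scrU$, is to transfer the CTBP result back to $\scrP$ itself, which local weak convergence alone does not deliver; this would require either a direct coupling of the two dynamics respecting the identities of early-born type-$a$ vertices, or an adaptation of the stochastic approximation scheme employed in the proof of part (a) of Theorem \ref{thm:max-degree}.
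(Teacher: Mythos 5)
The statement you are addressing is stated in the paper as a \emph{conjecture}: the authors do not prove it, and the strongest results they establish are Theorem \ref{thm:max-degree}(a) (almost sure and $\bL^2$ convergence of the rescaled degree of the \emph{first} type-$a$ vertex, via the martingales $\deg(m,\cG_n)/C^{\sss(a)}_{m,n}$ of Lemma \ref{lem:mart-degree-evolution}) and Theorem \ref{thm:max-degree}(b) (the upper bound $M_n^a/n^{(\phi_a+\eps)/2}\probc 0$, via the higher-order martingales of Lemma \ref{lem:mart-k} and a union bound). Notably, the paper's own analysis of maximal degrees works \emph{directly with the $\scrP$ dynamics} through stochastic approximation ($\Upsilon_t^{\sss(a)}\convas\phi_a$), not through the CTBP embedding; your route through $\scrU$ is therefore a genuinely different strategy, closer in spirit to the Móri/persistence literature the authors cite.

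Your write-up is a plausible plan, but it is not a proof, and you have correctly located the two genuine gaps yourself. First, the passage from pointwise convergence $\deg(v,t)e^{-\phi_a(t-\tau_v)}\convas Y_v^a$ to the uniform statement $M^a(t)e^{-\phi_a t}\convas W_a^*$ is exactly the hard part: the first Borel--Cantelli argument you give controls $\sup_k Y_k^a e^{-\phi_a\tau_k^a}$ (the sup of the \emph{limits}), but says nothing about late-born vertices whose rescaled degree transiently overshoots before its martingale has converged; the Doob maximal inequality plus union bound you sketch must be carried out with quantitative tail bounds uniform over the (exponentially growing) pool of type-$a$ vertices, and conditional independence of the Yule increments given the skeleton needs to be justified carefully since birth times and types are not independent of the degree evolutions. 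Second, even granting the CTBP result, the transfer to $\scrP(1,\mvpi,\kappa)$ is unresolved: resolvability as established in the paper yields only matching \emph{local weak limits}, and the maximal degree is a global functional not determined by local weak convergence (the paper itself is explicit that global functionals require separate treatment). There is no exact CTBP embedding of $\scrP$ — the attribute of an arriving vertex is drawn independently of the network and the attachment normalization couples all vertices — so the degree process of a fixed vertex in $\scrP$ is not literally a Yule process, and a coupling or a direct stochastic-approximation argument in the style of Lemmas \ref{lem:mart-degree-evolution}--\ref{lem:mart-k} (upgraded to control the $\liminf$ of the maximum, which is what part (b) of Theorem \ref{thm:max-degree} does not deliver) would be required. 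Until both gaps are closed, the conjecture remains open.
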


\section{Main results: non-tree regime}\label{sec:nt}
Although we focussed on tree networks in the previous section for expositional clarity, most of our results can be extended to the analogous non-tree network model defined in Definition \ref{def:attrib-evol}. Again we consider a finite attribute set $[K]$ and attribute pmf $\mvpi$ on $[K]$. We will denote the out-degree function as a vector $\mvm = (m_1, \dots, m_K) \in \mathbb{N}^K$, where $m_a$ denotes the out-degree of a vertex of attribute type $a$. The network sequence will be denoted by $\{\cG_n^{\mvm} : n \ge 0\}$ (with initial base connected graph $\cG_0^{\mvm}$) to highlight dependence on $\mvm$. Suppressing $\gamma$ (which equals one in this case), denote this model of evolving random networks by $\scrP(\mvpi, \kappa, \mvm)$. Degree asypmtotics for this model with $m_a \equiv m$ was obtained in \cite{jordan2013geometric}. 

%\begin{defn}[Network model with attribute-dependent out-degrees]
%	\label{def:attrib-evolnt}
	 %Fix preferential attachment parameter $\gamma \in [0,1]$.   
%	  Vertices $v_n$, with attribute type $a(v_n)$ independently sampled from $\mvpi$, enter the system sequentially at discrete times $n\geq 1$, with initial base connected graph $\cG_0^{\mvm}$. For $n \ge 0$, $v_{n+1}$ attaches to the network via $m_{a(v_{n+1})}$ outgoing edges, with each edge incident on an existing vertex $v \in \cG_n^{\mvm}$ independently with probability (conditionally on $\cG_n^{\mvm}$ and $a(v_{n+1})$) given by $\frac{\kappa(a(v), a(v_{n+1})) \deg(v,n)}{\sum_{v^\prime \in \cG_n^{\mvm} } \kappa(a(v'),a(v_{n+1})) \deg(v^\prime, n)}$ where (suppressing dependence on $\mvm$) $\deg(v, n)$ is the degree of $v$ in $\cG^{\mvm}_n$. $\cG_n^{\mvm}$ is viewed as a marked directed graph where each vertex has mark given by its attribute type.
		
%Denote this model of evolving random networks by $\scrP(\mvpi, \kappa, \mvm)$. 
%\end{defn}

We will now show that our techniques can be used to obtain the (almost sure) local weak limit of the network model $\scrP(\mvpi, \kappa, \mvm)$ in terms of an associated continuous time branching process stopped at a random exponential time.
To describe the local limit of $\scrP(\mvpi, \kappa, \mvm)$, we need the following extension of Lemma \ref{lem:minimizer} and \cite[Proposition 3.1]{jordan2013geometric}. {\cg Recall the discussion below Lemma \ref{lem:minimizer} explaining the origin of the minimization problem in \eqref{eqn:vpi-def} via stochastic approximation schemes related to the evolution of weights of degrees in various attributes in the tree regime. An analysis of similar functionals in the non-tree regime leads to the minimization problem in Lemma \ref{lem:minimizernt}. } As before $n_0 = |\cG_{0}|$ is the number of vertices in the seed graph at time zero. 

\begin{lemma}\label{lem:minimizernt}
Define
	\begin{equation}
\label{eqn:vpi-defnt}
	 V^{\mvm}_{\mvpi}(\vy):= \sum_{i=1}^K y_i -\frac{1}{2}\sum_{j=1}^K m_j\pi_j\left(\log(y_j) + \log(\sum_{k=1}^K y_k \kappa_{k,j})\right), \ \ \vy \in \mathbb{R}^K_+.
\end{equation}
Under Assumption \ref{ass:finite-case}, $V^{\mvm}_{\mvpi}(\cdot)$ has a unique minimizer $\mveta^{\mvm}:= \mveta^{\mvm}(\mvpi) = (\eta^{\mvm}_1(\mvpi), \ldots, \eta^{\mvm}_K(\mvpi))$ that lies in the interior of the set $S^{\mvm} := \{\vy \in \mathbb{R}^K_+: \sum_{i=1}^K y_i = \sum_{i=1}^K \pi_im_i\}$, satisfying $\partial_{a}V^{\mvm}_{\mvpi}(\eta^{\mvm})=0$ for all $a \in [K]$. Moreover, almost surely as $n \rightarrow \infty$,
$$
\tilde{Y}_a^{\mvm,\sss(n)}:= \frac{\sum_{v\in \cG^{\mvm}_n: a(v) = a} \deg(v,n)}{2(n+n_0)} \rightarrow \eta^{\mvm}_a, \qquad a \in [K].
$$
\end{lemma}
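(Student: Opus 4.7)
My plan for Lemma \ref{lem:minimizernt} proceeds in three stages: the deterministic optimization, the stochastic approximation setup, and verification of the standard hypotheses.

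First, I would establish existence, uniqueness, and location of the minimizer purely by convex analysis. Decompose
\[
V^{\mvm}_{\mvpi}(\vy) = \sum_{i=1}^K y_i \;-\; \frac{1}{2}\sum_{j=1}^K m_j\pi_j \log y_j \;-\; \frac{1}{2}\sum_{j=1}^K m_j\pi_j \log\Bigl(\sum_k y_k \kappa_{k,j}\Bigr).
\]
Under Assumption \ref{ass:finite-case} each coefficient $m_j\pi_j>0$, so the middle piece is strictly convex on $\bR_+^K$, while the linear and composite-log pieces are convex; hence $V^{\mvm}_{\mvpi}$ is strictly convex. Coercivity is immediate: $\sum_i y_i$ forces $V^{\mvm}_{\mvpi}\to+\infty$ as $\|\vy\|\to\infty$ inside $\bR_+^K$, and $-\log y_j\to+\infty$ as $y_j\to 0$. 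So a unique interior minimizer $\mveta^{\mvm}$ exists with $\nabla V^{\mvm}_{\mvpi}(\mveta^{\mvm})=0$. A direct computation gives
\[
y_a\,\partial_a V^{\mvm}_{\mvpi}(\vy) \;=\; y_a \;-\; \frac{m_a\pi_a}{2} \;-\; \frac{1}{2}\sum_j \frac{m_j\pi_j\, y_a\kappa_{a,j}}{\sum_k y_k\kappa_{k,j}};
\]
summing over $a$ and using $\sum_a y_a\kappa_{a,j}/\sum_k y_k\kappa_{k,j}=1$ telescopes the last term, yielding $\sum_a y_a\,\partial_a V^{\mvm}_{\mvpi}(\vy) = \sum_a y_a - \sum_a m_a\pi_a$. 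Evaluating at the critical point gives $\sum_a\eta_a^{\mvm} = \sum_a m_a\pi_a$, placing $\mveta^{\mvm}\in S^{\mvm}$.

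Second, I would view $\tilde\vY(n) := (\tilde Y_a^{\mvm,\sss(n)})_{a\in[K]}$ as a stochastic approximation scheme. Writing $S_a(n):=\sum_{v\in\cG_n^{\mvm}:\,a(v)=a}\deg(v,n)$ and conditioning first on $a(v_{n+1}) = a^*$ then averaging against $\mvpi$, the attachment law \eqref{eqn:912old} gives
\[
\E[S_a(n+1) - S_a(n)\,|\,\cG_n^{\mvm}] \;=\; m_a\pi_a \;+\; S_a(n)\sum_{a^*}\frac{\pi_{a^*}m_{a^*}\kappa(a,a^*)}{\sum_c \kappa(c,a^*)S_c(n)},
\]
the first term arising from the out-edges of $v_{n+1}$ when $a(v_{n+1})=a$, the second from edges landing on existing type-$a$ vertices. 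Substituting $\tilde Y_a = S_a/[2(n+n_0)]$ and bookkeeping the change of normalization produces
\[
\E[\tilde Y_a(n+1) - \tilde Y_a(n)\,|\,\cG_n^{\mvm}] \;=\; \frac{F_a(\tilde\vY(n))}{2(n+1+n_0)}, \qquad F_a(\vy) := -2y_a\,\partial_a V^{\mvm}_{\mvpi}(\vy).
\]
Thus the drift is proportional to the $\mathrm{diag}(\vy)$-modulated negative gradient of $V^{\mvm}_{\mvpi}$. Along the ODE $\dot y_a = F_a(\vy)$ one has $\tfrac{d}{dt}V^{\mvm}_{\mvpi}(\vy(t)) = -2\sum_a y_a\bigl(\partial_a V^{\mvm}_{\mvpi}(\vy)\bigr)^2 \le 0$, with equality only at $\mveta^{\mvm}$; so $V^{\mvm}_{\mvpi}$ is a strict Lyapunov function and $\mveta^{\mvm}$ is globally asymptotically stable inside $\bR_+^K$.

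Finally, I would invoke a classical stochastic approximation theorem (e.g.\ \cite[Theorem 2.17]{pemantle2007survey}). The hypotheses to verify are: (i) the martingale noise has uniformly bounded increments since $|S_a(n+1)-S_a(n)|\le 2\max_a m_a$, giving an $O(1/n^2)$ conditional second moment; and (ii) the iterates stay a.s. in a compact subset of the positive orthant. For the upper bound, $\sum_a \tilde Y_a(n) = |\cE_n|/(n+n_0) \le \max_a m_a + |\cE_0|/(n+n_0)$; for the lower bound, each type-$a$ arrival contributes $m_a$ to $S_a(n)$ from its own out-edges, so $S_a(n)\ge m_a N_a(n)$ where $N_a(n):=\#\{k\le n: a(v_k)=a\}$, and the strong law gives $\liminf_n \tilde Y_a(n)\ge m_a\pi_a/2>0$ a.s. With these bounds the Lyapunov machinery delivers $\tilde\vY(n)\to\mveta^{\mvm}$ almost surely. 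The main obstacle relative to the tree case of \cite{jordan2013geometric} is that here the number of edges added at step $n+1$ is the random quantity $m_{a(v_{n+1})}$, producing the additional linear term $\sum_i y_i$ in $V^{\mvm}_{\mvpi}$ and introducing an extra layer of fluctuation into the normalization; the strong law for $N_a(n)/n\to\pi_a$, combined with the deterministic upper bound on total edges, is what neutralizes this effect at the $1/n$ step-size scale required for the approximation argument to close.
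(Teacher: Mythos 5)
Your proposal is correct and follows essentially the same route as the paper: recasting the evolution of $\tilde{\vY}^{\mvm,\sss(n)}$ as a stochastic approximation scheme whose drift is the $\mathrm{diag}(\vy)$-modulated negative gradient of $V^{\mvm}_{\mvpi}$, using $V^{\mvm}_{\mvpi}$ as a strict Lyapunov function, and invoking the convergence results of \cite{pemantle2007survey}. The only difference is one of detail rather than strategy: you verify the noise and compact-containment hypotheses explicitly and locate the critical point in $S^{\mvm}$ via the telescoping identity $\sum_a y_a\,\partial_a V^{\mvm}_{\mvpi}(\vy)=\sum_a y_a-\sum_a m_a\pi_a$, whereas the paper notes that $S^{\mvm}$ is invariant for the flow and cites \cite[Proposition 2.18]{pemantle2007survey} directly.
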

Now define, for $a,b \in [K]$,
\begin{equation}
\label{eqn:phi-defnt}
	\nu^{\mvm}_b:= \frac{\pi_b}{\sum_{l=1}^K \kappa_{l,b}\eta^{\mvm}_l},  \qquad \phi^{\mvm}_{a,b}:= \kappa_{a,b}m_b\nu^{\mvm}_b, \qquad \phi^{\mvm}_a := \sum_{b=1}^K \phi^{\mvm}_{a,b} = 2- \frac{m_a\pi_a}{\eta^{\mvm}_a},
\end{equation}
where the last identity follows from the assertion on partial derivatives in Lemma \ref{lem:minimizernt}. 

Next, we define the continuous time branching process that describes the local limit of $\scrP(\mvpi, \kappa, \mvm)$. To ease notation we suppress the dependence on $(\mvpi, \kappa, \mvm)$ in the constructions and functionals below, when there is no scope for confusion. 

\begin{defn}\label{CTBPnt}
Using the quantities defined in \eqref{eqn:phi-defnt}, for any $a \in [K]$,  construct the continuous time, multi-type, branching process $\set{\BP^{\mvm}_{a}(t):t\geq 0}$ started with ancestor (root) of type $a$ where each individual of type $a^\prime$ in the population reproduces independently at times following the law of a Markovian pure birth process $\xi_{a^\prime}^{\mvm}(\cdot)$ with rate of birth of a type $b$ offspring at time $t$ given by
$
\phi^{\mvm}_{a^\prime,b} (\xi_{a'}^{\mvm}(t) + m_{a^\prime}).
$
For (possibly random) $A, T \ge 0$, we will denote by $\cT^{\mvm}_{A}(T)$ the progeny tree of $\BP^{\mvm}_{A}(T)$ rooted at the ancestor, viewed as a directed, marked, rooted graph with each vertex marked with its attribute type.
\end{defn}

Now, we state our main theorem of this Subsection. {\cg Recall the notion of local weak convergence for directed graphs in Definition \ref{def:lwc}.}

\begin{theorem}[Local weak convergence for $\scrP(\mvpi, \kappa, \mvm)$]\label{thm:fringe-convergencent}
	Fix finite attribute set {\cg $[K]$ and $\mvpi, \kappa$} satisfying Assumptions \ref{ass:finite-case}. Consider the sequence of networks  constructed as $\set{\cG^{\mvm}_n}_{n\geq 0} \sim \scrP(\mvpi, \kappa, \mvm)$. Then, 
	\[\cG^{\mvm}_n \, \stackrel{\mbox{$\operatorname{a.s.}$-\bf loc}}{\longrightarrow} \, \cT^{\mvm}_{A}(\tau),\]
 where $A\sim \mvpi$ and $\tau$ is an independent exponential random variable with rate $\lambda =2$. 
\end{theorem}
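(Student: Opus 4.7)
The overall strategy parallels the tree case of Theorem \ref{thm:fringe-convergence}, adapted to the directed graph local weak convergence framework of Section \ref{lwcnt}. The plan is to establish \emph{resolvability}: construct an auxiliary network model $\scrU^{\mvm}(\mvnu^{\mvm}, \kappa)$ whose attribute-weighted attachment dynamics admit an embedding into the continuous time branching process $\{\BP^{\mvm}_a(t):t\ge 0\}$ of Definition \ref{CTBPnt} and whose empirical attribute proportions converge to $\mvpi$, prove local weak convergence directly for $\scrU^{\mvm}$ via classical multi-type branching process theory, and finally transfer this convergence to $\scrP(\mvpi,\kappa,\mvm)$ by comparing one-step transition probabilities once the empirical degree-by-type and attribute measures concentrate on their limits.

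The first technical ingredient is Lemma \ref{lem:minimizernt}. Existence and uniqueness of $\mveta^{\mvm}$ in the interior of $S^{\mvm}$ would follow from strict convexity of $V^{\mvm}_{\mvpi}$ on $S^{\mvm}$; the first order condition $\partial_a V^{\mvm}_{\mvpi}(\mveta^{\mvm})=0$ yields the identity $\phi^{\mvm}_a = 2 - m_a\pi_a/\eta^{\mvm}_a$, extending \cite[Proposition 3.1]{jordan2013geometric} to general $\mvm$. The almost sure convergence $\tilde{Y}^{\mvm,(n)} \to \mveta^{\mvm}$ would then be established by interpreting $(\tilde Y^{\mvm,(n)}_a)_{a\in[K]}$ as a stochastic approximation scheme with step size of order $1/n$ whose mean vector field has $\mveta^{\mvm}$ as the unique attracting equilibrium, with $V^{\mvm}_{\mvpi}$ playing the role of a Lyapunov function and standard Robbins--Monro/ODE convergence results (cf. \cite{pemantle2007survey}) completing the argument.

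Next I would introduce $\scrU^{\mvm}(\mvnu^{\mvm},\kappa)$: new vertices of type $a$ arrive with relative weight $m_a\nu^{\mvm}_a$ and bring $m_a$ outgoing edges whose endpoints are chosen with probability proportional to $\kappa_{a(u),a}\deg(u,n)$. Because attachment weights factorize over type and degree, the incoming-degree process of a vertex of type $a'$ can be realized as a Markovian pure birth process with total rate $\phi^{\mvm}_{a'}(\xi^{\mvm}_{a'}(t)+m_{a'})$, whose type-$b$ thinning rate equals $\phi^{\mvm}_{a',b}(\xi^{\mvm}_{a'}(t)+m_{a'})$, exactly matching Definition \ref{CTBPnt}. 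The specific choice of $\mvnu^{\mvm}$ in \eqref{eqn:phi-defnt} forces the stable-type-distribution of this CTBP to equal $\mvpi$, so that $\scrU^{\mvm}$ is resolvable. Fringe convergence for $\scrU^{\mvm}$ then follows from the monotone extended-fringe representation of Section \ref{sec:fri-decomp} together with Nerman--Jagers stable age distribution theory \cite{nerman1981convergence,jagers1984growth,jagers1989general,jagers1996asymptotic}: the age of a uniformly chosen individual in $\BP^{\mvm}_A(T_n)$ is asymptotically $\Exp(2)$ with independent root type $A\sim \mvpi$, and conditional on this age $\tau$ the descendant subtree is an independent copy of $\BP^{\mvm}_A(\tau)$. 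Local weak convergence in the directed-graph sense is then obtained from the fact that in these sparse growing graphs, the in-neighborhood of a typical vertex coincides, up to a negligible probability of multi-edges and short cycles in the local limit, with its CTBP descendant tree.

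The final and most delicate step is transferring the local weak convergence from $\scrU^{\mvm}$ to the target $\scrP(\mvpi,\kappa,\mvm)$. Once both $\hat\mvpi_n\to\mvpi$ and $\tilde Y^{\mvm,(n)}\to\mveta^{\mvm}$ almost surely, the attachment probabilities \eqref{eqn:912old} of $\scrP$ and those of $\scrU^{\mvm}$ differ only through an $o(1)$ discrepancy in the normalizing denominators, and a coupling through a $k$-step breadth-first exploration of the in-neighborhood of a uniformly chosen root propagates agreement of the two neighborhoods up to depth $k$. I anticipate that the main obstacle lies here: unlike the tree case $\mvm\equiv 1$, each new vertex creates $m_a$ simultaneous edges whose \emph{joint} conditional law must be tracked, which forces one to control correlated attachment events and to argue that induced double-edges have vanishing influence on bounded-depth neighborhoods. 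The plan is to handle this by combining the concentration of $(\tilde Y^{\mvm,(n)},\hat\mvpi_n)$ provided by the stochastic approximation with a Doob martingale estimate for neighborhood exploration in the coupled dynamics, and then sandwiching the $\scrP$-neighborhoods between slightly perturbed copies of the $\scrU^{\mvm}$-neighborhoods so as to conclude via the fringe convergence already established.
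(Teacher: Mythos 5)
Your first ingredient (Lemma \ref{lem:minimizernt} via a Lyapunov function and the stochastic approximation machinery of \cite{pemantle2007survey}) matches the paper exactly. But your architecture for the local weak convergence itself --- prove it for an auxiliary model $\scrU^{\mvm}$ via Nerman--Jagers theory and then transfer to $\scrP(\mvpi,\kappa,\mvm)$ by coupling --- is not the paper's route, and the two places where your plan leans hardest are precisely where it has genuine gaps.

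First, the claimed embedding of the non-tree $\scrU^{\mvm}$ into the CTBP of Definition \ref{CTBPnt} does not hold exactly when $\mvm\not\equiv 1$. In the network model each arriving vertex draws its type once and attaches all $m_a$ of its edges simultaneously, conditionally i.i.d.\ given the current graph (so the endpoints may coincide, and ``the next arrival'' is a single global event), whereas in a multi-type CTBP every individual reproduces independently through its own point process. The race-of-exponentials argument behind Lemma \ref{lem:embedding-ok} does not extend; one would need a collapsing or sequential-edge construction, which you do not supply. The paper never asserts such an embedding in the non-tree case: $\BP^{\mvm}_a$ appears only as the \emph{limit} object.

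Second, and more seriously, the transfer from $\scrU^{\mvm}$ to $\scrP(\mvpi,\kappa,\mvm)$ is the entire difficulty and your proposal only gestures at it. Agreement of the one-step attachment kernels up to $o(1)$ once $(\tilde{Y}_a^{\mvm,\sss(n)})_a$ and $\hat{\mvpi}_n$ have concentrated does not yield a coupling of depth-$k$ in-neighborhoods of a uniform root: that neighborhood is assembled from attachment decisions made at all times $1,\dots,n$, including the early epoch where the discrepancy is $O(1)$, and the number of attachment events that must agree along the exploration is random and unbounded, so the per-step errors do not obviously sum to $o(1)$. The paper sidesteps both issues by never comparing the two models pathwise: it runs the stochastic approximation of Lemma \ref{lem:jordan} directly on the empirical counts $\fP_n(\vt)$ of in-components of $\cG^{\mvm}_n$ isomorphic to each finite marked tree $\vt$ (with Lemma \ref{nottwo} controlling the event that one in-component receives two edges from a new vertex), inducts on $|\vt|$, and only then identifies the unique solution of the limiting recursion with the law of $\cT^{\mvm}_{A}(\tau)$ by the historical-ordering computation of Proposition \ref{prop:prop-of-fp}. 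To salvage your plan you would need to prove a quantitative total-variation coupling of neighborhoods, which is not supplied by the references you invoke; otherwise the recursion for $\scrP$ must be derived directly, with the CTBP used only to solve it.
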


In particular, although the network itself is not tree-like, its local limit is supported on the space of (directed, marked, rooted) trees.

{\cg\begin{rem}
The local limit for $\scrP(\mvpi, \kappa, \mvm)$ in the $K=1$ case was obtained before in \cite{berger2014asymptotic,garavaglia2022universality,banerjee2023local} using different techniques. The `stopped CTBP' description of the local limit, which appears in the above theorem, was obtained for the $K=1$  (non-tree) case in \cite{banerjee2023local}. The previous works in \cite{berger2014asymptotic,garavaglia2022universality} obtained a description of the local weak limit in terms of the so-called \emph{P\'olya point graph}, by using a P\'olya urn description of the single type linear preferential attachment graph process. Although these works provide a description of the joint local limit of both the in- and out-components in comparison to our directed version for the in-component, the methods in \cite{berger2014asymptotic,garavaglia2022universality} are very specific to the single type linear case and do not extend to more general attachment schemes (like the $K>1$ case studied here). The stochastic approximation method applied here gives a shorter, more direct way to obtain local weak limits that is not rigidly tied to the specific graph process and seems to be generalizable to many other random graph models. Moreover, contrasted with the involved description of the P\'olya point graph through point processes with non-linear intensities given in terms of Gamma random variables (see \cite[Section 2.3]{berger2014asymptotic}), the stopped CTBP description of the local limit given here is more amenable to quantifying refined asymptotics of functionals like PageRank. Lastly, due to the locally tree-like nature of these models, we believe that the directed local weak limit can be naturally upgraded to the joint local limit of the in- and out-components, using ideas from \cite{aldous-fringe}, in a similar fashion as the tree case ($m_a\equiv 1$) discussed above. We leave this for future work.
\end{rem}}

As applications of the above theorem, we can derive the following asymptotics for the empirical degree and PageRank distributions. The following result gives degree asymptotics. Below, for $a \in [K]$, $\vp_n^{\mvm,a} = \set{\vp_n^{\mvm,a}(k) : k \ge m_a}$, where $\vp_n^{\mvm,a}(k)$ denotes the proportion of vertices with degree $k$. 

\begin{theorem}\label{thm:deg-nt}
	Assume the model $\scrP(\mvpi,\kappa,\mvm)$ satisfies Assumptions \ref{ass:finite-case}.
For each $a\in [K]$, $\vp_n^{\mvm,a} \convas \vp_\infty^{\mvm,a}$ where 
	\begin{equation}
		\label{eqn:pinf-def}
		\vp^{\mvm,a}_\infty(k) = \frac{2}{\phi^{\mvm}_a}\frac{\Gamma\left(m_a+\frac{2}{\phi^{\mvm}_a}\right) \Gamma(k)}{\Gamma\left(k+1+\frac{2}{\phi^{\mvm}_a}\right)\Gamma(m_a)}, \qquad k\geq m_a.  
	\end{equation}
	In particular $\vp_\infty^{\mvm,a}(k)\sim k^{-(1+2/\phi^{\mvm}_a)}$ as $k\to\infty$.	  
\end{theorem}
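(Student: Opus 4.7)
The plan is to derive Theorem \ref{thm:deg-nt} as a direct consequence of the local weak convergence established in Theorem \ref{thm:fringe-convergencent}, combined with an explicit distributional identity for the degree of the root in the stopped continuous time branching process $\cT^{\mvm}_A(\tau)$.

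First I would verify that the joint indicator $\ind\set{a(v) = a,\,\deg(v,n) = k}$ is a bounded continuous functional on $\tilde{\mathbb{G}}_\star$: the out-degree of $v$ equals $m_{a(v)}$ and is encoded in the mark of $v$, while the in-degree of $v$ equals $|U_{\le 1}(v)|-1$, which depends only on the depth-one incoming exploration. Both quantities therefore depend only on $U_{\le 1}(v)$, so the indicator is locally constant in the $d_{loc}$ pseudometric of Section \ref{lwcnt}. Applying the a.s.\ local weak convergence of Theorem \ref{thm:fringe-convergencent} to the bounded continuous functionals $\ind\set{a(\emptyset)=a,\,\deg(\emptyset)=k}$ and $\ind\set{a(\emptyset)=a\}}$, and using Assumption \ref{ass:finite-case} to secure $\pi_a>0$ in the denominator, one obtains
\begin{equation*}
\vp_n^{\mvm,a}(k) \;=\; \frac{\#\set{v\in\cG_n^{\mvm}:a(v)=a,\,\deg(v,n)=k}}{\#\set{v\in\cG_n^{\mvm}:a(v)=a}} \;\convas\; \pr\!\left(\deg(\rho)=k \,\big|\, A=a\right),
\end{equation*}
where $\rho$ denotes the root of $\cT^{\mvm}_A(\tau)$ and $A\sim\mvpi$, $\tau\sim\Exp(2)$ are independent.

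Next comes the explicit evaluation of this conditional probability. Given $A=a$, the root has out-degree exactly $m_a$ and its in-degree equals the number $\xi_a^{\mvm}(\tau)$ of offspring it produces in $\BP_a^{\mvm}(\tau)$. By Definition \ref{CTBPnt}, summing the per-type rates yields that $\xi_a^{\mvm}$ is a Markovian pure birth process with transition rate $\phi_a^{\mvm}(n+m_a)$ from state $n$; equivalently, $Z_a(t):=\xi_a^{\mvm}(t)+m_a$ is a Yule process of rate $\phi_a^{\mvm}$ started from $m_a$, so that for $k\ge m_a$,
\begin{equation*}
\pr(Z_a(t)=k)=\binom{k-1}{m_a-1}e^{-\phi_a^{\mvm}m_a t}\bigl(1-e^{-\phi_a^{\mvm}t}\bigr)^{k-m_a}.
\end{equation*}
Integrating against the density of $\tau$ and substituting $u=1-e^{-\phi_a^{\mvm}t}$ reduces the resulting integral to a Beta function, giving
\begin{equation*}
\pr(\deg(\rho)=k\mid A=a)=\frac{2}{\phi_a^{\mvm}}\binom{k-1}{m_a-1}B\!\left(k-m_a+1,\;m_a+\tfrac{2}{\phi_a^{\mvm}}\right),
\end{equation*}
and rewriting the Beta function via Gamma functions yields precisely \eqref{eqn:pinf-def}. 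The tail estimate $\vp_\infty^{\mvm,a}(k)\sim k^{-(1+2/\phi_a^{\mvm})}$ then follows from Stirling's formula applied to the ratio $\Gamma(k)/\Gamma(k+1+2/\phi_a^{\mvm})$.

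Given that Theorem \ref{thm:fringe-convergencent} is already in hand, the only subtlety is the first step: the careful handling of the directed local exploration topology so as to transfer a.s.\ local weak convergence to the ratio of empirical counts defining $\vp_n^{\mvm,a}(k)$. Once this is done, the remainder of the argument is a purely analytic evaluation of an integral against a Yule transition probability, with no further probabilistic input required.
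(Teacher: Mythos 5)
Your proposal is correct and follows essentially the same route as the paper: both deduce the result from the local weak convergence of Theorem \ref{thm:fringe-convergencent} and then compute the law of the root degree in the stopped branching process $\cT^{\mvm}_A(\tau)$, whose offspring process is a linear pure birth process of total rate $\phi_a^{\mvm}(\xi+m_a)$. The only (cosmetic) difference is in evaluating that law — you integrate the negative-binomial Yule marginal against the $\Exp(2)$ density and reduce to a Beta integral, whereas the paper conditions on the successive exponential birth times and uses memorylessness — and both yield \eqref{eqn:pinf-def}.
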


One question of interest in applications are mechanisms where minority vertices can increase their degree centrality propensity, resulting in heavier degree distribution exponents, and one proposal is increasing the number of incoming edges vis-\'a-vis majority nodes \cite{karimi2018homophily}. The above result gives if and only if conditions to check this in terms of the driving parameters of the model. We first need some notation. Suppose we have a two attribute space $\cS = {\cg [2]}$ with relative densities $\pi_1 = \pi, \pi_2 = 1-\pi$ with $\pi<1/2$ and referred to as the minority. Let $\mvm = (m_1, m_2)$ and define $M = \pi m_1 + (1-\pi) m_2$. Define the function on $0 < y < M$:
\begin{align}
V_{\mvpi}(y):= M - \frac{1}{2}\bigg[ &\big(m_1\pi \big(\log(y) + \log(y\kappa_{1,2} + (M-y)\kappa_{2,1})\big)\big) \notag\\
 &+ \big(m_2(1-\pi) \big(\log(M - y) + \log(y\kappa_{2,1} + (M-y)\kappa_{2,2})\big)\big)\bigg]. \label{eqn:two-type}
\end{align}
\begin{corollary}
\label{corr:two-type}
The function in \eqref{eqn:two-type} has a unique minimizer $\eta^{\mvm} \in  (0,M)$. The asymptotic degree distribution of type $1$ vertices will have a heavier tail than type $2$ vertices if and only if $m_1 \pi (M - \eta^{\mvm}) < m_2(1-\pi)\eta^{\mvm}$. 
\end{corollary}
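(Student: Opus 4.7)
The plan is to obtain both assertions as direct specializations to $K=2$ of the tools already developed in Lemma \ref{lem:minimizernt} and Theorem \ref{thm:deg-nt}. First, I would verify that the single-variable function $V_{\mvpi}(y)$ in \eqref{eqn:two-type} is exactly the restriction of the two-dimensional $V^{\mvm}_{\mvpi}$ from \eqref{eqn:vpi-defnt} to the affine slice $S^{\mvm} = \{(y_1,y_2)\in\mathbb{R}^2_+ : y_1+y_2 = M\}$, under the parametrization $y_1=y$, $y_2 = M-y$, $y\in(0,M)$. On this slice, the linear term $y_1+y_2$ is constant and equal to $M$, which accounts for the leading constant; the remaining $-\tfrac{1}{2}$ log-terms match up with the two summands in \eqref{eqn:two-type}. (I would note a small indexing subtlety: the appearance of $\kappa_{k,j}$-type couplings inside the logarithms in \eqref{eqn:two-type} should be reconciled with \eqref{eqn:vpi-defnt} term by term.)

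Once this identification is in hand, uniqueness and interior location of the minimizer follow immediately: Lemma \ref{lem:minimizernt} guarantees that $V^{\mvm}_{\mvpi}$ has a unique minimizer $\mveta^{\mvm}$ that lies in the interior of $S^{\mvm}$. Setting $\eta^{\mvm} := \eta_1^{\mvm}$ and using $\eta_2^{\mvm} = M-\eta^{\mvm}$ gives the unique minimizer of \eqref{eqn:two-type} in the open interval $(0,M)$.

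For the second assertion, I would invoke Theorem \ref{thm:deg-nt}: the limiting degree distribution of type $a$ vertices satisfies $\vp^{\mvm,a}_\infty(k)\sim k^{-(1+2/\phi^{\mvm}_a)}$ as $k\to\infty$, so type $1$ has a (strictly) heavier tail than type $2$ if and only if $\phi^{\mvm}_1 > \phi^{\mvm}_2$. Using the identity in \eqref{eqn:phi-defnt}, namely $\phi^{\mvm}_a = 2 - m_a\pi_a/\eta^{\mvm}_a$, this becomes
\begin{equation*}
\frac{m_1 \pi}{\eta^{\mvm}} \;<\; \frac{m_2(1-\pi)}{M-\eta^{\mvm}},
\end{equation*}
which, after clearing denominators (both positive since $\eta^{\mvm}\in(0,M)$), is precisely the stated criterion $m_1\pi(M-\eta^{\mvm}) < m_2(1-\pi)\eta^{\mvm}$.

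The proof is essentially bookkeeping once Lemma \ref{lem:minimizernt} and Theorem \ref{thm:deg-nt} are in hand, so I do not anticipate a genuine obstacle; the only mildly delicate step is verifying the algebraic identification between the two-dimensional variational problem and its one-dimensional parametrization, together with ensuring that the interior minimizer supplied by the lemma does indeed yield $\eta^{\mvm}\in(0,M)$ strictly (not at the boundary). This is however immediate from the log-barrier behavior of $V_{\mvpi}(y)$ as $y\downarrow 0$ or $y\uparrow M$, which forces $V_{\mvpi}(y)\to+\infty$ at the endpoints.
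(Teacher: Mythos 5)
Your proposal is correct and matches the paper's (implicit) argument: the corollary is intended as a direct specialization of Lemma \ref{lem:minimizernt} and Theorem \ref{thm:deg-nt} to $K=2$ via the parametrization $y_1=y$, $y_2=M-y$, with the tail comparison reducing to $\phi^{\mvm}_1>\phi^{\mvm}_2$ and then to the stated inequality through $\phi^{\mvm}_a = 2 - m_a\pi_a/\eta^{\mvm}_a$. Your observation about the $\kappa$-indexing inside the logarithms of \eqref{eqn:two-type} is well taken — the displayed formula appears to contain typos relative to \eqref{eqn:vpi-defnt} — but this does not affect the argument.
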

\begin{rem}
Using a fluid limit approach and under the assumption that various functionals of the models converge (i.e. $\sum_{v\in \cG_n, a(v) = a} \deg(v,n) \approx C n $ and $\sum_{v\in \cG_n, a(v) = b} \deg(v,n) \approx (M - C) n $ for large $n$), \cite[Supplementary information]{karimi2018homophily} derived conditions on the parameters of the model involving solving a cubic equation in $C$. The above gives a different but equivalent characterization for the same goal. We delve further on the impact of the parameters on various degree exponents and their role in network sampling in the next section. 
\end{rem}

The next result quantifies PageRank asymptotics. In the following, as before, $\pr_a$ and $\E_a$ will respectively denote the law of the stopped branching process $\BP^{\mvm}_{a}(\tau)$, and corresponding expectation, with root having attribute type $a$. For $v \in \cG_n^{\mvm}$ and damping factor $c \in (0,1)$, the PageRank $\fR^{\mvm}_{v,c}(n)$ of vertex $v$ is defined as in Definition \ref{def:page-rank}, and $R^{\mvm}_{v,c}(n) := n \fR^{\mvm}_{v,c}(n)$ is the corresponding graph normalized version.

\begin{theorem}\label{thm:page-rank-tailsnt}
 Suppose $\set{\cG^{\mvm}_n:n\geq 0}\sim \scrP(\mvpi,\kappa, \mvm)$  satisfying Assumption \ref{ass:finite-case}. Fix $a\in [K]$ and define the random variable $\cR^{\mvm}_{\emptyset, c}$ as
 $$
 \cR^{\mvm}_{\emptyset, c} := (1-c)\left(1 + \sum_{l=1}^\infty c^l \sum_{\vj \in \cP^{\mvm}_{l,\emptyset}}\prod_{h=0}^{l-1}\frac{1}{m_{a(j_h)}}\right)
 $$
 where $\cP^{\mvm}_{l,\emptyset}$ is the set of directed paths of length $l$ in $\cT^{\mvm}_{A}(\tau)$ ending at the root, and for $\vj=(j_0, \dots, j_l) \in \cP^{\mvm}_{l,\emptyset}$, $m_{a(j_h)}$ is the out-degree of the $h$-th vertex on the path.
Then, for every continuity point $r$ of the distribution of $\cR^{\mvm}_{\emptyset, c}$ under $\pr_a$, 
    \[n^{-1}\sum_{v\in \cG^{\mvm}_n} \ind\set{a(v) = a, R^{\mvm}_{v,c}(n) > r} \convas \pi_{a}\pr_a(\cR^{\mvm}_{\emptyset, c} > r).  \]

	Let $\lambda^{\mvm}_c$ denote the Perron-Frobenius eigen-value of the matrix $\vM^{\mvm,\sss(c)}$ 
	\begin{equation}
\label{eqn:matrix-mc-defnt}
	\vM^{\mvm,\sss(c)} = \left(\vM_{(a,b)}^{\mvm,\sss(c)}:= c\phi^{\mvm}_{a,b}\frac{m_a}{m_b} + \phi^{\mvm}_a\ind\set{a=b} \right)_{a, b\in [K]}.
\end{equation}

	 Then:
	\begin{enumeratea}
		\item There exist finite constants $0< B_1\le B_2< \infty$ such that for all $a\in [K]$, the limiting PageRank distribution of vertices of type $a$, i.e. the distribution of $\cR^{\mvm}_{\emptyset, c}$ under $\pr_a$, satisfies:
		\[B_1 r^{-2/\lambda^{\mvm}_c} \leq \pr_a(\cR^{\mvm}_{\emptyset, c} > r) \leq B_2 r^{-2/\lambda^{\mvm}_c}.\]
		\item Further for any $a\in [K]$ and $n\geq 1$ defining:
		\[\scI_n^{\sss(a,b)} = \set{\vj:= (j_0, j_1, \ldots, j_n) \in [K]^n: j_0 =a, \, j_n =b},\]
we have
		\begin{equation}
		\label{eqn:expec-page-ranknt}
			\E_a(\cR^{\mvm}_{\emptyset, c})  = (1-c)\left[1+ \sum_{b \in [K]} \frac{m_a}{m_b}\sum_{n=1}^\infty c^n \sum_{\vj\in \scI_n^{\sss(a)}} \prod_{l=0}^{n-1} \left(\frac{\phi^{\mvm}_{j_j, j_{l+1}}}{2-\phi^{\mvm}_{j_l}}\right) \right].
		\end{equation}
		\end{enumeratea}
	 
\end{theorem}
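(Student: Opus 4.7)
The proof naturally splits into three pieces: (i) the convergence of the empirical Page-rank distribution, (ii) the expectation formula (b), and (iii) the two-sided tail bound (a). For (i), the strategy is to feed Theorem \ref{thm:fringe-convergencent} into the general machinery of \cite{garavaglia2020local,banerjee2021pagerank}. Unfolding the defining recursion \eqref{eqn:page-rank} along incoming paths gives
\[
R^{\mvm}_{v,c}(n) \;=\; (1-c)\Bigl(1 + \sum_{l\ge 1} c^l\!\!\sum_{\vj\in\cP^{\mvm,\mathrm{in}}_{l,v}}\prod_{h=0}^{l-1}\frac{1}{m_{a(j_h)}}\Bigr),
\]
so Page-rank at $v$ depends only on the \emph{incoming} neighborhood of $v$, which is precisely the information encoded by the directed local topology $d_{\loc}$. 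Truncating the series at depth $L$ produces a bounded continuous functional on $\tilde{\mathbb{G}}_\star$; a uniform tail bound on $\sum_{l>L} c^l P_{l,v}(n)/\prod m$ (coming from standard second-moment estimates on path counts in preferential attachment) lets one interchange limit and truncation, yielding the claimed a.s.\ convergence.

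\textbf{Part (b).} Here the idea is to exploit the Markovian structure of $\BP^{\mvm}_a(\cdot)$ and the memorylessness of $\tau \sim \mathrm{Exp}(2)$. Conditioning on the first generation, the subtree rooted at a type-$b$ child born at time $\sigma_v \le \tau$ is, given $\sigma_v \le \tau$, an independent fresh copy of $\BP^{\mvm}_b$ stopped at an independent $\mathrm{Exp}(2)$ time. Combined with the identity
\[
\E_a[\#\{\text{type-}b\text{ first-gen offspring}\}] \;=\; \frac{m_a\,\phi^{\mvm}_{a,b}}{2-\phi^{\mvm}_a},
\]
obtained from a direct Yule-process computation on the rate $\phi^{\mvm}_{a,b}(\xi^{\mvm}_a(t)+m_a)$, this yields the linear system
\[
f_a \;=\; (1-c) + c\sum_{b}\frac{m_a\,\phi^{\mvm}_{a,b}}{m_b\bigl(2-\phi^{\mvm}_a\bigr)}\,f_b, \qquad f_a:=\E_a[\cR^{\mvm}_{\emptyset,c}].
\]
Writing this as $f=(1-c)(I-c\vA)^{-1}\vone$ and expanding the Neumann series, the $m$-factors in $\vA_{a,b}=m_a\phi^{\mvm}_{a,b}/(m_b(2-\phi^{\mvm}_a))$ telescope along each path $a=j_0\to j_1\to\cdots\to j_n=b$ to give the factor $m_a/m_b$ in front, which is exactly \eqref{eqn:expec-page-ranknt}. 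Convergence of the series (and a.s.\ finiteness of $\cR^{\mvm}_{\emptyset,c}$) follows because $\lambda^{\mvm}_c<2$, forcing $c\,\|\vA\|<1$; this strict inequality can be extracted from the same Perron--Frobenius argument used in Proposition \ref{prop:lc-bound}.

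\textbf{Part (a).} The exponent $\lambda^{\mvm}_c$ is identified as the Malthusian parameter of a \emph{weighted} multi-type CTBP in which each type-$b$ offspring is assigned a multiplicative weight $c/m_b$. Defining $g_a(t):=\E_a[\cR^{\mvm}_{\emptyset,c}(t)]$ (with $\tau$ replaced by deterministic $t$), the same conditioning argument yields the renewal equation
\[
g_a(t) \;=\; (1-c) + c\sum_{b}\phi^{\mvm}_{a,b}\frac{m_a}{m_b}\int_0^t e^{\phi^{\mvm}_a u}\,g_b(t-u)\,du.
\]
A Laplace transform analysis shows that the matrix $\vM^{\mvm,\sss(c)}$ of \eqref{eqn:matrix-mc-defnt} is exactly the one whose spectral radius controls the exponential growth, so $g_a(t)\sim C_a e^{\lambda^{\mvm}_c t}$ with $(C_a)$ the corresponding PF right eigenvector (strictly positive by irreducibility). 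Upgrading this expectation asymptotic to the almost-sure statement $\cR^{\mvm}_{\emptyset,c}(t) = W_a e^{\lambda^{\mvm}_c t}(1+o(1))$ for a nonnegative random variable $W_a$ with $\pr_a(W_a>0)>0$ requires a Kesten--Stigum-type argument applied to the weighted process. Once this is in hand, integrating out $\tau\sim\mathrm{Exp}(2)$ gives
\[
\pr_a(\cR^{\mvm}_{\emptyset,c}>r) \;\asymp\; \E_a[W_a^{\,2/\lambda^{\mvm}_c}]\,r^{-2/\lambda^{\mvm}_c},
\]
and the uniform two-sided bounds $B_1,B_2$ follow because the $K$ initial types give only finitely many such prefactors.

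\textbf{Main obstacle.} The hard step is the almost-sure asymptotic $\cR^{\mvm}_{\emptyset,c}(t)\sim W_a e^{\lambda^{\mvm}_c t}$, in particular the strict positivity of $W_a$ on a set of positive probability, which is needed for the lower tail bound. Naively one wants to use the classical Malthusian martingale of the CTBP, but the Page-rank weighting by $1/m_{a(v)}$ along each path disrupts it: one must instead construct a right-eigenfunction-based martingale $M_a(t):=e^{-\lambda^{\mvm}_c t}\sum_{v\in\BP^{\mvm}_a(t)}C_{a(v)}\prod_{\text{path to root}}(c/m_{a(\cdot)})$ of the weighted process and verify that it is uniformly integrable with nondegenerate limit. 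Finiteness of $\cS$, strict positivity of $\kappa$ (Assumption \ref{ass:finite-case}), and boundedness of $\mvm$ guarantee both the requisite $x\log x$ / $L^2$ integrability and an irreducibility lemma that prevents $W_a$ from vanishing; once these are verified, the tail estimates fall out by exponential change of variables.
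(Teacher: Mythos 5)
Your part (b) is correct and takes a genuinely different route from the paper. You decompose $\cR^{\mvm}_{\emptyset,c}$ over the first generation, use memorylessness of $\tau\sim\Exp(2)$ to get the linear system $f_a=(1-c)+c\sum_b \frac{m_a\phi^{\mvm}_{a,b}}{m_b(2-\phi^{\mvm}_a)}f_b$, and expand the Neumann series; the telescoping of the $m$-factors along paths indeed reproduces \eqref{eqn:expec-page-ranknt}, and convergence holds because the spectral radius of $c\vA$ equals the Perron root of $\tilde\vM^{\sss(c)}(2)$, which is $<1$ since $\lambda^{\mvm}_c<2$. The paper instead computes $\E_a(\cR^{\mvm}_{\emptyset,c})$ by integrating the mean of the percolated cluster size against the $\Exp(2)$ density and expanding the renewal measure of an associated Markov renewal process $\bX$; your route is more elementary for the expectation alone, but the paper's renewal-theoretic setup does double duty because it also delivers the positive limit $\lim_t e^{-\lambda^{\mvm}_ct}\E_a[\cR^{\mvm}_{\emptyset,c}(t)]>0$ needed for the lower tail bound. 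The first claim (almost sure convergence of the empirical Page-rank distribution) is handled exactly as in the paper, by feeding Theorem \ref{thm:fringe-convergencent} into \cite{garavaglia2020local,banerjee2021pagerank}.

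The genuine gap is in part (a). Your lower bound rests on the almost-sure asymptotic $\cR^{\mvm}_{\emptyset,c}(t)=W_ae^{\lambda^{\mvm}_ct}(1+o(1))$ with $\pr_a(W_a>0)>0$, which you correctly identify as the main obstacle but do not establish; moreover the object you propose for it, $M_a(t)=e^{-\lambda^{\mvm}_ct}\sum_{v}C_{a(v)}\prod(c/m_{a(\cdot)})$, is \emph{not} a martingale. The birth rate of a type-$b$ child from a vertex $u$ is $\phi^{\mvm}_{a(u),b}(\xi_u(t)+m_{a(u)})$, so the drift of your weighted vertex count involves the current degrees and does not close on $M_a(t)$ itself; any eigenfunction martingale here must weight each vertex of the percolated cluster by (degree $+\ m$), which is exactly the paper's $U^{\mvm}(t)=\sum_b\frac{h^{\mvm}_b}{m_b}Y^{\mvm}_b(t)$ with $Y^{\mvm}_b(t)$ the degree-weighted count. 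More importantly, the paper shows that no Kesten--Stigum theorem is needed: the upper tail bound follows from uniform-in-$t$ bounds on all moments of $e^{-\lambda^{\mvm}_ct}U^{\mvm}(t)$ (a generator/Lyapunov induction) plus Markov's inequality with a moment of order $\ell>2/\lambda^{\mvm}_c$, and the lower bound follows from the convergence of the \emph{first} moment to a positive constant (via Nummelin's Markov renewal theorem applied to the kernel $\mu^{*,\mvm,c}$) combined with the bounded second moment and Paley--Zygmund, after which one integrates over $\tau$. Your route, if completed, would prove the stronger exact asymptotic $\pr_a(\cR^{\mvm}_{\emptyset,c}>r)\sim Cr^{-2/\lambda^{\mvm}_c}$, but as written the nondegeneracy of $W_a$ and the uniformity of the $(1+o(1))$ are precisely the content that is missing, so the two-sided bound is not yet proved.
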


\begin{rem} 
	
\begin{enumeratei}
\item If there is only one type ($K=1$), or if $K \ge 2$ and all rows of the matrix $\kappa$ are identical, it is straightforward to check using $\partial_{a}V^{\mvm}_{\mvpi}(\eta^{\mvm})=0$ for all $a \in [K]$ and \eqref{eqn:phi-defnt} that $\phi^{\mvm}_{a,b} = \pi_b m_b/(\sum_{l \in [K]}\pi_lm_l)$ and $\phi^{\mvm}_a = 1$ for all $a,b \in [K]$. Thus, in this case, $(m_1,\dots,m_K)$ is a corresponding right eigenvector for the matrix $\vM^{\mvm,\sss{(c)}}$ and $\lambda^{\mvm}_c = 1 + c$ for any $c \in (0,1)$. This implies that the limiting PageRank tail exponent in Theorem \ref{thm:page-rank-tailsnt} is $2/(1+c)$ and the tail exponent of the limiting degree distribution obtained in Theorem \ref{thm:deg-nt} is $2$. In particular, these exponents are \emph{independent of the out-degree vector $\mvm$}. For $K=1$, these exponents also follow from \cite{banerjee2021pagerank}. However, as seen in \cite{banerjee2023degree} for $K=1$, the out-degree significantly influences the degree separation between the `hubs' (maximal degree vertices) and the remaining vertices. This leads to the ansatz that, although the degree tail exponents are the same across attributes in this case, increasing the out-degree of a given type will lead to the maximal degree vertex coming from the same type with high probability. Verifying this is deferred to future work. 
\item Suppose $m_a \equiv m \ge 1$ for all $a \in [K]$. Denote corresponding quantities in the tree case ($m=1$ but same $\mvpi, \kappa(\cdot,\cdot)$) by dropping the $\mvm$ in the superscript. It follows from the form of $V_{\mvpi}^{\mvm}(\cdot)$ in \eqref{eqn:vpi-defnt} that $\eta_a^{\mvm} = m \eta_a$ for all $a \in [K]$. This implies that $\phi^{\mvm}_{a,b} = \phi_{a,b}$ and $\phi^{\mvm}_a = \phi_a$ for all $a,b \in [K]$. Consequently, the tail exponents for the limiting PageRank distribution, as well as the limiting degree distribution, match in the tree and non-tree cases.
\end{enumeratei}
\end{rem}

\section{Main results: Uniform attachment}
\label{sec:exten-res}
The goal of this section is to extend our results to the $\gamma=0$ (uniform attachment) case where incoming vertices attach to pre-existing vertices based purely on their type (and are agnostic to degree information), thus showing that this class of models also exhibit the phenomenon of resolvability where local geometry can be approximated by carefully chosen continuous time branching processes. For simplicity, we discuss only the tree case $\mvm \equiv 1$, although results here can be extended to the non-tree setting as for the $\gamma=1$ case.

Analogous to  \eqref{eqn:phi-def} define,
\begin{equation}
	\label{eqn:zero-phi}
	\chi_b = \frac{\pi_b}{\sum_{a^\prime} \pi_{a^\prime} \kappa(a^\prime, b) }, \qquad \varphi_{a,b} = \kappa(a,b) \chi_b, \qquad \varphi_a = \sum_{b\in [K]} \phi_{a,b}. 
\end{equation}
The following is easy to check from construction. 
\begin{lemma}\label{lem:gamma-mat-def}
	Consider the functionals in \eqref{eqn:zero-phi} and define the matrix $\vN = (\varphi_{a,b})_{a,b\in [K]}$. Then, 
	\begin{enumeratea}
		\item $\sum_{a \in [K]} \pi_a \varphi_a =1$. 
		\item The matrix $\vN$ {\cg has} Perron-Frobenius eigen-value $\lambda_{\PF}(\vN) =1$ with corresponding left eigen-vector $\mvpi$ namely the original attribute distribution. 
	\end{enumeratea}
\end{lemma}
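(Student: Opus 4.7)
The lemma is essentially a direct algebraic verification, so my plan is to prove (b) first and then derive (a) as a one-line consequence.

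For (b), the plan is simply to compute the left action of $\mvpi$ on $\vN$ coordinate by coordinate and observe that it reproduces $\mvpi$. Explicitly, for each $b \in [K]$,
\[
(\mvpi \vN)_b \;=\; \sum_{a \in [K]} \pi_a\, \varphi_{a,b} \;=\; \chi_b \sum_{a \in [K]} \pi_a\, \kappa(a,b),
\]
and the definition $\chi_b = \pi_b / \sum_{a' \in [K]} \pi_{a'} \kappa(a',b)$ in \eqref{eqn:zero-phi} makes the right-hand side collapse to $\pi_b$. Hence $\mvpi$ is a left eigenvector of $\vN$ with eigenvalue $1$. To upgrade this to the Perron–Frobenius statement, I invoke Assumption \ref{ass:finite-case}, under which $\kappa(a,b)>0$ and $\pi_b>0$ for all $a,b$, so every entry $\varphi_{a,b}= \kappa(a,b)\chi_b$ is strictly positive. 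The classical Perron–Frobenius theorem for positive matrices then guarantees a unique (up to scaling) strictly positive left eigenvector, and its eigenvalue equals the spectral radius. Since $\mvpi$ is strictly positive, it must be this Perron vector and the corresponding eigenvalue $1$ must be $\lambda_{\PF}(\vN)$.

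For (a), once (b) is established, I simply write
\[
\sum_{a \in [K]} \pi_a \varphi_a \;=\; \sum_{a \in [K]} \pi_a \sum_{b \in [K]} \varphi_{a,b} \;=\; \sum_{b \in [K]} (\mvpi \vN)_b \;=\; \sum_{b \in [K]} \pi_b \;=\; 1,
\]
using Fubini on the finite sum together with the identity $\mvpi \vN = \mvpi$ from (b) and the fact that $\mvpi$ is a probability measure on $[K]$.

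There is no real obstacle here — the only subtle point is ensuring strict positivity so that the Perron–Frobenius characterization applies without ambiguity; this is why I must explicitly invoke Assumption \ref{ass:finite-case}. Everything else is a direct substitution using the definitions in \eqref{eqn:zero-phi}.
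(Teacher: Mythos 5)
Your proof is correct and matches the paper's approach: the paper omits a proof of this lemma (calling it ``easy to check from construction''), but its proof of the analogous Proposition \ref{prop:left-eigen} proceeds exactly as you do — verify $\mvpi \vN = \mvpi$ by direct substitution and then invoke Perron–Frobenius for the strictly positive matrix guaranteed by Assumption \ref{ass:finite-case}. Part (a) as a column-sum consequence of (b) is likewise the intended one-line verification (noting that the paper's $\varphi_a = \sum_b \phi_{a,b}$ is a typo for $\sum_b \varphi_{a,b}$, which you correctly read through).
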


\begin{defn}
	\label{def:model-u-corr-p-gamma-0}
	Fix $\mvpi, \kappa$ satisfying Assumptions \ref{ass:finite-case} and let $\gamma = 0$.  Construct $\mvvarphi = (\varphi_a:a\in [K])$ as in \eqref{eqn:zero-phi}. Using $\mvvarphi, \kappa$,  consider the construction of the model class $\scrU(0,\mvvarphi, \kappa)$ started with root of an arbitrary type $a^\prime \in [K]$, constructed using the branching process $\set{\BP_{a^\prime;(0,\mvvarphi, \kappa)}(t):t\geq 0}$ as in Definition \ref{defn:ctbp}. 
\end{defn}
  % construction of model $\scrU(1,\mvnu,\kappa)$ where

\begin{theorem}[Local weak convergence for uniform model]\label
	Fix finite attribute space {\cg $[K]$}. Let $\gamma = 0$ and fix $\mvpi, \kappa$ satisfying Assumptions \ref{ass:finite-case} and consider the sequence of networks  constructed as $\set{\cG_n}_{n\geq 0} \sim\scrP(0, \mvpi, \kappa)$. Then, as in Definition \ref{def:local-weak} \eqref{it:fringe-b}, 
	\[\cG_n \probcrf \cT_\infty,\]
	where $\cT_\infty$ is the unique {\cg {\tt sin}}-tree with fringe distribution 
	\begin{equation}
	\label{eqn:fringe-limit0}
		\fpm_{\infty, 0}(\cdot) = \cL(\BP_{A;(0,\mvvarphi, \kappa)}(\tau^\prime)),
	\end{equation}
where $\set{\BP_{A;(0,\mvvarphi, \kappa)}(t):t\geq 0}$, is a branching process as in Definition \ref{def:model-u-corr-p-gamma-0} where $A\sim \mvpi$ and $\tau^\prime$ is an independent exponential random variable with rate $\lambda =1$.  This implies the following.
\begin{enumeratea}
	\item For each fixed $a\in [K]$, \textcolor{black}{define $\tilde{\varphi}_a = 1/(1+\varphi_a)$}.  Then the empirical distribution of degrees of vertices of attribute type $a$ satisfies $\vp_n^a \probc \pr(\Geom(\tilde{\varphi}_a) = \cdot)$, namely the limit distribution is Geometric with success probability $\tilde{\varphi}_a$. 
	\item Analogous to \eqref{eqn:t-power}, for any damping factor $c\in (0,1)$, one can construct a limit PageRank random variable $\cR_{\emptyset, c}$. For every continuity point $r$ of the distribution of $\cR_{\emptyset, c}$ under $\pr_a$, 
    \[n^{-1}\sum_{v\in \cG_n} \ind\set{a(v) = a, R_{v,c}(n) > r} \probc \pi_{a}\pr_a(\cR_{\emptyset, c} > r).  \]
	There exist finite constants $0< B_1< B_2< \infty$ such that for all $a\in [K]$, the limit PageRank distribution of vertices of type $a$, i.e. the distribution of $\cR_{\emptyset, c}$ under $\pr_a$ satisfies:
			\[B_1 r^{-1/c} \leq \pr_a(\cR_{\emptyset, c} > r) \leq B_2 r^{-1/c}.\]
\end{enumeratea}
\end{theorem}

\begin{rem}\ 
	
\begin{enumeratei} 
\item The proof of this result follows in an identical fashion to the results in Section \ref{sec:main-res} via suitable modification and is omitted as it has no new ideas. 
\item The degree distribution result in a simpler (two attribute) setting was previously derived in \cite{bhamidi2022community}.
\item Note that the above result implies that the degree distribution (consisting of appropriate mixtures of Geometric distributions) has exponential tails; however the limit PageRank distribution has a power law (explicitly computable) tail exponent which might be at first sight surprising; see \cite{banerjee2021pagerank} for the genesis of such results for dynamic network models. {\cg Intuitively, this can be understood by noting that a high PageRank value of a vertex results from the vertex having either a high in-degree or having a child with a high PageRank score (see \cite[Page 5-6]{banerjee2021pagerank}). For dynamic graphs discussed here, older vertices tend to have higher in-degrees and are typically close to other high degree (and high PageRank) vertices. This \emph{reinforcement} results in the PageRank having heavier tails than degree. Mathematically, this results from the interpretation of the PageRank of a vertex as its progeny size under a percolated branching process (see Definition \ref{def:perc-bp} below). Via the local weak convergence result, this implies that the limiting PageRank $\cR_{\emptyset, c}$ behaves like $e^{c\tau^{\prime}}$ (approximate size of root progeny in $\BP_{A;(0,\mvvarphi, \kappa)}(\tau^\prime)$), which has the stated tail behavior.}
\end{enumeratei}
\end{rem}

\section{Applications to network sampling: rare minorities, relative ranking and sampling bias}\label{fairsec}

The previous sections described general asymptotic results for attribute driven models. The goal of this section is to illustrate ramifications and insight provided by the theory developed above in concrete cases of interest in applications. Given that most social networks can only be indirectly observed, network sampling, and its impact on inferential pipelines of the true network based on the sample, is a significant research endeavor across multiple communities,  see for example the surveys \cite{gile2018methods,crawford2018identification} and the references therein to pointers to this vast field. Here we explore specific questions in this vast field, motivated by the attributed network context, namely (a) settings where PageRank based and other exploration based sampling schemes are able to sample rare minorities, (b) effects of homophily and out-degrees on relative ranking of minorities and, (c) the insight provided regarding inferring properties of the underlying network through sampled portions of the graph.

\subsection{Sampling mechanisms and attribute representation: } For clarity in exposition, in this Subsection and the following one, we restrict ourselves to the tree case. We consider the following major sampling schemes: 

\begin{enumeratea}
	\item {\bf Uniform node sampling $(\uNS)$:} Here one picks a vertex uniformly at random from $\cG_n$. 
	\item {\bf Sampling proportional to degree $(\dNS)$:} Pick a vertex uniformly at random and then pick a neighbor of this vertex uniformly at random. 
	\item {\bf Sampling proportional to in-degree $(\indNS)$:} Pick a vertex at random and then select the parent; by convention, if the root is picked (which happens with probability $o_{\pr}(1)$ as $n\to\infty$) then select the root. {\cg Note that, other than if the root is a leaf, this sampling scheme results in only non-leaf vertices being sampled. }
	\item {\bf Sampling proportional to PageRank $(\prNS)$: } Fix a damping factor $c$ and sample a vertex with probability proportional to the PageRank scores $\set{\fR_{v,c}: v\in \cG_n}$ as defined in Section \ref{sec:func-interest}.  In the context of the (tree) network model $\set{\cG_n:n\geq 1}$ starting with a single root at time zero,  by the proof of \cite[Theorem 1.1]{chebolu2008pagerank}, this can be accomplished by the following ``local'' algorithm:
	\begin{enumeratei}
		\item Pick a vertex $V$ uniformly at random from $\cG_n$. 
		\item Independently let $G\sim \Geom(1-c)-1$ (here $\Geom(\cdot)$ is a Geometric random variable with prescribed parameter with support starting at one). 
		\item Starting from $V$ traverse $G$ steps towards the root (i.e. using the directions of edges in $\cG_n$ from child to parent), stopping at the root,  if the root is reached before $G$ steps. Sample the terminal vertex. 
	\end{enumeratei}
	\item {\bf Fixed length sampling $(\prfNS)$:} Fix $M\geq 0$. Consider the same implementation of the PageRank scheme but here the halting distribution is taken to be $G \equiv M$. Abusing notation, we use $\prfNS$ to denote this sampling scheme. 
\end{enumeratea}

We start with a general theorem that quantifies the asymptotic sampling probabilities of attribute types {\cg via the above sampling schemes. In the next Section, using the setting of sampling rare minorities, we show how one can use these explicit formulae to gain insight into specific applications}. Recall the functionals defined in \eqref{eqn:phi-def} and the random walk $\vS$ in Proposition \ref{prop:mc-descp}. 

\begin{theorem}\label{thm:network-sampling}
	Let $\set{\cG_n:n\geq 1} \sim \scrP(1,\mvpi,\kappa)$ satisfying Assumption \ref{ass:finite-case}. Let ${\cg U_n}$ be a random node sampled from $\cG_n$, using one of the sampling schemes above and let $a({\cg U_n})$ be the corresponding attribute. Then $\forall~ b\in [K]$, as $n\to\infty$,
	\begin{enumeratea}
		\item\label{it-uNS} Under uniform sampling $\pr_{\uNS}(a({\cg U_n}) =b|\cG_n)\convas \pi_b $.
		\item\label{it-uDNS} Under sampling proportional to degree $\pr_{\dNS}(a({\cg U_n}) =b|\cG_n)\convas \eta_b$.
		\item Under sampling proportional to in-degree, 
		\[\pr_{\indNS}(a({\cg U_n}) =b|\cG_n)\convas \eta_b \phi_b = \pi_b \frac{\phi_b}{2-\phi_b} = \pi_b \Psi_b \E^{\vS}_b\left[\frac{1}{\Psi_{S_1}}\right]. \]
		\item Under sampling proportional to PageRank, letting $G\sim \Geom(1-c)-1$ independent of $\vS$, 
		\[\pr_{\prNS}(a({\cg U_n}) =b|\cG_n)\convas \pi_b \Psi_b \E^{\vS}_b\left[\frac{1}{\Psi_{S_G}}\right]. \]
		Since $\vS$ has stationary distribution $\set{\pi_a \Psi_a: a\in [K]}$, 
		\[\lim_{c\uparrow 1} \lim_{n\to\infty} \pr_{\prNS}(a({\cg U_n}) =b|\cG_n)\stackrel{\mathrm{a.s.}}{=} \pi_b \Psi_b.  \]
		\item Under fixed length walk sampling, 
		\[\pr_{\prfNS}(a({\cg U_n}) =b|\cG_n)\convas \pi_b \Psi_b \E^{\vS}_b\left[\frac{1}{\Psi_{S_M}}\right].\]
		In particular, 
		\[\lim_{M\uparrow \infty} \lim_{n\to\infty} \pr_{\prfNS}(a({\cg U_n}) =b|\cG_n)\stackrel{\mathrm{a.s.}}{=} \pi_b \Psi_b.  \]
	\end{enumeratea}  
\end{theorem}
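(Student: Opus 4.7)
The plan is to leverage the almost-sure extended fringe convergence $\cG_n \probcrf \cT_\infty$ from Theorem \ref{thm:fringe-convergence}, reducing sampling probabilities of local algorithms on $\cG_n$ to analogous quantities on the infinite {\tt sin}-tree $\cT_\infty$.

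Parts (a)--(c) follow essentially from the stochastic approximation results behind Theorem \ref{thm:fringe-convergence}. Part (a) is the resolvability statement $\hat\mvpi_n\convas\mvpi$. For part (b),
\[
\pr_{\dNS}(a(V_n)=b\,|\,\cG_n)=\frac{\sum_{v:a(v)=b}\deg(v,n)}{\sum_v \deg(v,n)},
\]
and the tree version ($\mvm\equiv 1$) of Lemma \ref{lem:minimizernt} gives $(2n)^{-1}\sum_{v:a(v)=a}\deg(v,n)\convas\eta_a$, whence the ratio converges to $\eta_b$. Part (c) follows because $\indNS$ on a directed tree (edges child-to-parent) is equivalent to picking a uniform vertex and returning its parent; applying part (b) with in-degree $=\deg(v,n)-1$ yields the limit $2\eta_b-\pi_b=\eta_b\phi_b$. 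The identification with $\pi_b\Psi_b\E^\vS_b[1/\Psi_{S_1}]$ is immediate from Proposition \ref{prop:mc-descp}: $\pi_b\Psi_b\E^\vS_b[1/\Psi_{S_1}]=\pi_b\sum_j M_{bj}=\pi_b\phi_b/(2-\phi_b)=\eta_b\phi_b$.

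For parts (d) and (e), the local Page-rank algorithm recasts $\pr_{\prNS}(a(V_n)=b\,|\,\cG_n)$ as the probability that, after walking $G\sim\Geom(1-c)-1$ (respectively deterministic $M$) steps upward from a uniform $V_n$, the endpoint has type $b$. Extended fringe convergence up to the relevant depth shows that, conditional on $G$, the joint law of the type string $(a(V_n),a(p(V_n)),\ldots,a(p^G(V_n)))$ converges to that of the first $G+1$ spine labels in $\cT_\infty$. The main obstacle is to identify this spine sequence as a Markov chain with explicit kernel. I would carry this out in two steps: (i) using the evolution rule \eqref{eqn:912old} together with the degree-stable limit from (b), compute the one-step joint
\[
\pr(a(V_n)=a,\,a(p(V_n))=b\,|\,\cG_n)\convas \phi_{ba}\eta_b,
\]
yielding the upward transition $P(a,b)=\phi_{ba}\eta_b/\pi_a=\pi_b M_{ba}/\pi_a$ (using $\phi_{ba}=(2-\phi_b)M_{ba}$ and $\pi_b=(2-\phi_b)\eta_b$); and (ii) invoke the conditional independence of off-spine subtrees built into the CTBP description of $\cT_\infty$ (Definition \ref{def:model-u-corr-p}) to extend (i) to the full Markov property along the spine, giving $P^G(a,b)=(\pi_b/\pi_a)M^G_{ba}$ by induction.

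Combining,
\[
\pr_{\prNS}(a(V_n)=b\,|\,\cG_n)\convas \sum_a \pi_a\,\E_G[P^G(a,b)]=\pi_b\,\E_G[(M^G\mathbf{1})_b],
\]
and a short induction from the definition of $\vS$ gives $Q^G_{bj}=M^G_{bj}\Psi_j/\Psi_b$, so $\E_G[(M^G\mathbf{1})_b]=\Psi_b\,\E^\vS_b[1/\Psi_{S_G}]$, yielding (d) and (e). For the stated limits as $c\uparrow 1$ or $M\uparrow\infty$, Assumption \ref{ass:finite-case} makes $\vS$ irreducible with stationary distribution $\{\pi_a\Psi_a\}$, hence $\E^\vS_b[1/\Psi_{S_G}]\to\sum_a\pi_a\Psi_a/\Psi_a=1$, yielding the claimed limits $\pi_b\Psi_b$. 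The bulk of the technical work is (i) establishing the spine Markov property via the CTBP construction and (ii) controlling the tail of $G$ when interchanging $G$ and $n\to\infty$; everything else reduces to the algebraic identities above and the asymptotics from Theorem \ref{thm:fringe-convergence}.
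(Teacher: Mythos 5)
Your parts (a)--(c) follow the paper's argument essentially verbatim: (a) is immediate from the dynamics, (b) is the degree-sum convergence $\tilde Y_b^{\sss(n)}\convas\eta_b$ from \cite{jordan2013geometric}, and (c) is the same ``uniform vertex, take its parent'' reduction giving $2\eta_b-\pi_b=\eta_b\phi_b$, with the $\Psi$-identity checked algebraically exactly as in the paper. For (d)--(e), however, you take a genuinely different route. The paper never touches the walk representation in the proof: it writes $\pr_{\prNS}(a(V_n)=b\,|\,\cG_n)$ directly as the ratio $\sum_{v}R_{v,c}(n)\ind\{a(v)=b\}\big/\sum_v R_{v,c}(n)$, invokes the laws of large numbers for Page-rank sums proved in Theorem \ref{thm:page-rank-tails}(c) (whose proof required the uniform-integrability work of Proposition \ref{prop:r-zc-moments} and a Fatou argument), and then identifies the limit $\pi_b\E_b(\cR_{\emptyset,c})$ with $\pi_b\Psi_b\E^{\vS}_b[1/\Psi_{S_G}]$ via Proposition \ref{prop:mc-descp}, which was itself derived from a Markov renewal process analysis. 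You instead use the Chebolu--Frieze local-walk description of $\prNS$ together with extended fringe convergence to depth $k$, reducing everything to the limiting law of the ancestral type string $(a(V_n),\dots,a(p^k(V_n)))$ and hence to expected descending type-path counts $\prod_j\vM_{a_{j+1},a_j}$ in the stopped CTBP. Your route has two advantages: the type string is a \emph{bounded} local functional of the depth-$k$ extended fringe, so no moment or uniform-integrability control on Page-rank is needed beyond dominated convergence over the geometric tail of $G$ (plus the observation that the proportion of vertices within depth $k$ of the root is $o(1)$, which is built into the existence of the infinite spine); and it treats (d) and (e) genuinely uniformly, whereas the paper's one-line ``(e) follows identically'' implicitly requires an LLN for level-$M$ descendant counts that is not literally Theorem \ref{thm:page-rank-tails}(c). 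The one piece you should make precise is what you call the ``spine Markov property'': you do not actually need the type sequence to be Markov, only its finite-dimensional law, which can be read off from the monotone representation \eqref{ftoef} by summing $\varpi(\vt_k)\prod_j Q(\vt_j,\vt_{j-1})$ over trees with prescribed root types; this collapses to $\pi_{a_k}$ times the expected number of descending paths with types $(a_{k-1},\dots,a_0)$ in $\BP_{a_k}(\tau)$, which the memorylessness of $\tau$ and the branching property evaluate as $\prod_j\vM_{a_{j+1},a_j}$ --- exactly the quantity the paper computes by its renewal-theoretic route. With that step spelled out, your argument is complete and yields the same formulas, including the $c\uparrow1$ and $M\uparrow\infty$ limits via the stationary distribution $\{\pi_a\Psi_a\}$ of $\vS$.
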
 
\begin{rem}
	\label{rem:904}
	Part \eqref{it-uNS} follows directly from the dynamics of construction of $\scrP$. Part \eqref{it-uDNS} follows from \cite[Proposition 3.1]{jordan2013geometric}. 
\end{rem}
\subsection{Network sampling: Implications for rare minority sampling }
An area of signficant research interest in the context of network sampling comprises settings where there is a particular rare minority which has higher propensity to connect within itself as opposed to majority vertices; for substantial recent applications and impact of such questions, see \cite{mouw2012network,merli2016sampling,stolte2022impact}. In such settings, devising schemes where one gets a non-trivial representation of minorities is challenging if the sample size is much smaller than the network size. Consider the case $K=2$ (two attributes) and $\pi_1 \ll \pi_2$ {\cg so that type 1 vertices can be interpreted as rare minorities in the population}. In this case, uniform sampling will clearly not be fair as the sampled vertices will tend to be more often from the attribute $2$ class. Therefore, it is desirable to \emph{explore} the network locally around the initial (uniformly sampled) random vertex. As uniformly sampled vertices lie mostly in the \emph{fringe} (edge) of the network, the exploration should try to travel towards the `centre', thereby traversing edges along their natural direction. 

The goal is to seek a tradeoff between two competing interests: to avoid high sampling costs, the explored set of vertices should not be too large; however, to ensure non-trivial representation, this set should not be too small. This leads us to analyze the sampling schemes discussed above in this context.

When the $\kappa_{i,j}$ are all comparable in magnitude, any exploration started from a type $2$ vertex hits a type $1$ vertex reasonably quickly. Thus, sampling schemes (b)-(e) will all perform reasonably well, much better than (a).
When $\kappa_{1,2}$ is very small, incoming type $2$ vertices more likely connect to the same type. Therefore, any exploration started from a type $2$ vertex and traversing outbound edges has to spend a while before hitting a type $1$ vertex.  Hence, any sampling scheme with the goal of having a non-trivial representation of the rare minority vertices will necessarily have a high exploration cost. {\cg Such questions lead} to the following specific case of model class $\scrP$ with two attributes $1,2$ with, 
\begin{equation}
\label{eqn:rare-model}
	\mvkappa = (\kappa(i,j))_{1\leq i,j\leq 2} = \begin{pmatrix}
		1 & 1 \\
		a & 1 
	\end{pmatrix},
	\qquad \mvpi = \frac{1}{1+\theta} (\theta, 1), \qquad \theta \ll1.
\end{equation}
Thus, 
\begin{enumeratei}
	\item Type $1$ vertices are relatively \emph{rare} compared to type $2$ vertices; we will often refer to type $1$ vertices as minorities and type $2$ as majorities. 
	\item Newly entering majority vertices into the population have equal propensity to connect to minority or majority vertices. Minorities have (relatively) {\bf much higher} propensity to connect to other minority vertices, as compared to majority vertices. 
\end{enumeratei}
We are interested in exhibiting a scenario where the \emph{uniform and degree-based sampling schemes are not effecient in sampling from such rare minorities but the PageRank and fixed length walk sampling schemes are} (for large network size and small $a, \theta$). This requires letting $\theta$ and $a$ go to zero in a dependent way.
We achieve this by analyzing the setting
\begin{equation}
\label{eqn:thet-scaling}
	\theta := \theta(a) = D\sqrt{a},
\end{equation}
where $D>0$ is a fixed constant and where $a\downarrow 0$. The following Theorem summarizes our findings. {\cg The proof entails understanding properties of the explicit formulae in Theorem \ref{thm:network-sampling} in this specific context. } % An intuitive picture of the sampling dynamics in this case is sketched in Remark \ref{mixrem}.
\begin{theorem}\label{thm:rare-sampling}
	Consider the model $\set{\cG_n:n\geq 1}\sim \scrP(1, \mvpi, \kappa)$ with {\cg two type attribute space $[2]$} and choices of propensity and attribute proportions satisfying \eqref{eqn:rare-model} with the scaling \eqref{eqn:thet-scaling} for fixed $D>0$. Consider the network sampling schemes in the setting of Theorem \ref{thm:network-sampling}. Then as $a\downarrow 0$:
	\begin{enumeratea}
		\item Under uniform node sampling, 
		\[\pr_{\uNS}(a({\cg U_n}) = 1|\cG_n) \convas D\sqrt{a} + O(a). \]
		\item For sampling proportional to degree,
		\[\pr_{\dNS}(a({\cg U_n}) =1|\cG_n) \convas 2D\sqrt{a} - (4D^2 +\frac{1}{2})a + O(a^{3/2}). \]
		\item For random in-degree based sampling, 
		\[\pr_{\indNS}(a({\cg U_n}) =1|\cG_n) \convas 3D\sqrt{a} +O(a). \]
		\item For PageRank based sampling and fixed length walk sampling:
		\begin{align*}
			\lim_{c\uparrow 1} \lim_{n\to\infty} \pr_{\prNS}(a({\cg U_n}) =1|\cG_n) \stackrel{\mathrm{a.s.}}{=} &\frac{2D^2 -\frac{1}{2} + \sqrt{\left((2D^2-\frac{1}{2})^2 + 4D^2\right)}}{2D^2 +\frac{1}{2} + \sqrt{\left((2D^2-\frac{1}{2})^2 + 4D^2\right)}} + O(\sqrt{a})\\
			& \stackrel{\mathrm{a.s.}}{=}\lim_{M\uparrow \infty} \lim_{n\to\infty} \pr_{\prfNS}(a({\cg U_n}) =1|\cG_n).
		\end{align*} 
	\end{enumeratea} 
\end{theorem}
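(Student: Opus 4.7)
The strategy is to apply Theorem~\ref{thm:network-sampling}, which expresses each asymptotic sampling probability in terms of the functionals $\pi_b,\,\eta_b,\,\phi_b,\,\phi_{a,b},\,\Psi_b$ and the Markov chain $\vS$ from Proposition~\ref{prop:mc-descp}. The task then reduces to expanding these quantities in powers of $\sqrt{a}$ under the coupling $\theta=D\sqrt{a}$.

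The central expansion is that of $\mveta$. Using Lemma~\ref{lem:minimizer} and Lagrange multipliers for the constraint $\eta_1+\eta_2=1$, with $\eta_1=x$ the stationarity condition reduces to
\[
\frac{\pi_1}{x} + \frac{(1-a)\pi_1}{x+a(1-x)} = \frac{\pi_2}{1-x}.
\]
Inserting the ansatz $x=c_1\sqrt{a}+c_2 a+O(a^{3/2})$ and matching orders gives $c_1=2D$ from the $O(1)$ equation $D/c_1+D/c_1=1$, and then $c_2=-(4D^2+1/2)$ from the $O(\sqrt{a})$ equation. Hence $\eta_1=2D\sqrt{a}-(4D^2+1/2)a+O(a^{3/2})$. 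From \eqref{eqn:phi-def} this yields $\nu_1=1/2+(D/2-1/(8D))\sqrt{a}+O(a)$, $\nu_2=\pi_2$, $\phi_1=3/2-(D/2+1/(8D))\sqrt{a}+O(a)$, $\phi_2=1+O(\sqrt{a})$, and $\phi_{2,1}=a/2+O(a^{3/2})$. Parts (a), (b), (c) are immediate substitutions into Theorem~\ref{thm:network-sampling}: (a) uses $\pi_1=D\sqrt{a}/(1+D\sqrt{a})=D\sqrt{a}+O(a)$; (b) is the expansion of $\eta_1$ above; (c) uses $\phi_1/(2-\phi_1)=3+O(\sqrt{a})$ so that $\pi_1\phi_1/(2-\phi_1)=3D\sqrt{a}+O(a)$.

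For part (d), observe that since $\kappa_{i,j}>0$ the matrix $\vM$ has strictly positive entries, and hence $\vS$ is an irreducible, aperiodic Markov chain on $\{1,2\}$; its unique stationary distribution is $\mu_a=\pi_a\Psi_a$ (a probability measure by the normalization in Proposition~\ref{prop:mc-descp}). Under either scheme, $S_G$ (resp.\ $S_M$) converges in distribution to $\mu$ as $c\uparrow 1$ (since $G\sim\Geom(1-c)-1$ diverges in probability) or as $M\uparrow\infty$, independently of the starting state. Consequently,
\[
\E_1^{\vS}\!\left[\frac{1}{\Psi_{S_G}}\right] \longrightarrow \sum_a \frac{\mu_a}{\Psi_a} = \sum_a \pi_a = 1,
\]
so both iterated limits in (d) equal $\pi_1\Psi_1$ (up to an $O(\sqrt{a})$ correction from the outer limit $a\downarrow 0$).

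It remains to compute $\pi_1\Psi_1$ to leading order in $\sqrt{a}$. From the right-eigenvector equation $\vM\Psi=\Psi$ one has $\Psi_1/\Psi_2=\vM_{1,2}/(1-\vM_{1,1})$. Using the expansions above, $\vM_{1,2}=\pi_2/(2-\phi_1)=2+O(\sqrt{a})$, while the delicate piece is
\[
1-\vM_{1,1} = 1-\frac{\nu_1}{2-\phi_1} = \frac{\sqrt{a}}{2D}+O(a),
\]
which forces $\eta_1$ to be tracked to order $a$, not merely $\sqrt{a}$. Hence $\Psi_1/\Psi_2=4D/\sqrt{a}+O(1)$. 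Combining with $\pi_1=D\sqrt{a}+O(a)$ and the normalization $\pi_1\Psi_1+\pi_2\Psi_2=1$ gives
\[
\pi_1\Psi_1 = \frac{4D^2}{4D^2+1}+O(\sqrt{a}),
\]
which is identical to the stated expression since $(2D^2-1/2)^2+4D^2=(2D^2+1/2)^2$ and the discriminant form thus simplifies exactly. The main obstacle throughout is this last step: because $\vM$ is a small perturbation of a matrix whose Perron--Frobenius right eigenvector is singular ($\Psi_1$ diverging, $\Psi_2$ finite), the expansion of $\eta_1$ must be carried one order beyond what is needed for parts (a)--(c), and the near-cancellation producing the clean limiting form reflects the algebraic constraint $\lambda_{\PF}(\vM)\equiv 1$ of Proposition~\ref{prop:left-eigen}.
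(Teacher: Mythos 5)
Your proposal is correct and follows essentially the same route as the paper: apply Theorem \ref{thm:network-sampling} and expand $\mveta$, the $\phi$'s and $\mvPsi$ in powers of $\sqrt{a}$ under $\theta=D\sqrt{a}$ (the paper obtains $\eta$ and $\psi=\Psi_1/\Psi_2$ from explicit quadratic formulas and then Taylor-expands, whereas you use a perturbative ansatz in the stationarity equation and the ratio $\Psi_1/\Psi_2=\vM_{1,2}/(1-\vM_{1,1})$ plus normalization, which is an equivalent computation). Your observations that the order-$a$ term of $\eta_1$ is genuinely needed for part (d) because of the near-cancellation in $1-\vM_{1,1}$, and that the stated limit simplifies to $4D^2/(4D^2+1)$ since $(2D^2-\tfrac12)^2+4D^2=(2D^2+\tfrac12)^2$, are both correct and consistent with the paper's (unsimplified) formula.
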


\begin{rem}\label{mixrem}
\begin{enumeratei}
\item Although Theorem \ref{thm:rare-sampling} states the results in (d) for $c \uparrow 1$ and $M \uparrow \infty$, we believe the exact probabilities are close to the limiting values on taking $M = 1/(1-c) = M^*/\sqrt{a}$ for large enough $M^*$, and $n$ large so that the diameter of the network $\cG_n$ (which in the tree case should be $\Theta(\log n)$) is sufficiently greater than $2M^*/\sqrt{a}$.
The calculations in the proof of Theorem \ref{thm:rare-sampling} show that $\Psi_1 = \Theta(1/\sqrt{a})$, $\Psi_{2} = \Theta(1)$, and the transition probabilities of the random walk $\vS$ in Proposition \ref{prop:mc-descp} satisfy $\pr^\vS_i(S_1 =j) = \Theta(\sqrt{a})$ when $i \neq j$ and $\pr^\vS_i(S_1 =j) =\Theta(1)$ when $i=j$. Thus, the probability of sampling a type $1$ vertex starting from a type $2$ vertex in one step of $\vS$ is $\Theta(\sqrt{a})$ leading to claims (b) and (c) above. However, the random walk is seen to `mix' in $M^*/\sqrt{a}$ steps (for sufficiently large $M^*$) and hence, if the network is large enough so that $\vS$ started from a uniformly sampled vertex does not get near the root in these many steps, the minority sampling probabilities stabilize around their limiting value $\pi_1\Psi_1 = \Theta(1)$.
\item The function $D \mapsto \frac{2D^2 -\frac{1}{2} + \sqrt{\left((2D^2-\frac{1}{2})^2 + 4D^2\right)}}{2D^2 +\frac{1}{2} + \sqrt{\left((2D^2-\frac{1}{2})^2 + 4D^2\right)}}$ is monotone and approaches $0$ as $D \rightarrow 0$ and $1$ as $D \rightarrow \infty$. It takes value $1/2$ (`equal representation') at $D=1/2$.
\end{enumeratei}
\end{rem}

\color{black}
\section{Discussion}
\label{sec:disc}

\subsection{Related work}
{\cg Section \ref{sec:motivation-overview} describes the tip of the iceberg in terms of research in the network science community related to attributed network data. Here we will describe related work in the probability community. For general overviews of probabilistic approaches to network models (albeit not covering the specific class of models in this paper), see \cite{durrett2006random,van2016random,van2023random}. For static inhomogeneous random graphs where a fixed vertex set of individuals form connections according to underlying latent types,  see \cite{bollobas2007phase,lovasz2012large}. A crucial proof technique in this paper is stochastic approximations; particularly relevant is the survey \cite{pemantle2007survey}. %The  paper \cite{jordan2013geometric} which studied the degree distribution asymptotics for the ($\gamma =1$) model was particularly influential for our understanding; see Lemma \ref{lem:minimizer} as well as \eqref{eqn:8522} below. In the context of deriving fringe convergence via keeping track of potential ``histories'' of evolving trees, \cite{rudas2007random} initiated this technique for the single type setting. The foundation of fringe convergence in the probability community was laid out in \cite{aldous-fringe}; for a more recent survey see  \cite{holmgren2017fringe}. For a technical treatment of continuous time branching processes most closely related to this paper, see \cite{jagers1996asymptotic}. Continuous time branching process techniques in conjunction with local limit characterizations to study tail exponents of PageRank were first used in \cite{banerjee2021pagerank} for the (single type) linear preferential attachment model. 
For the use of PageRank in identifying community structure in Stochastic Block Models, see \cite{avrachenkov2018mean,banerjee2023pagerank}. }

\subsection{Network sampling and bias}
{\cg
Given that the majority of social networks can only be indirectly observed, network sampling mechanisms have a long and extensive history across multiple disciplines \cite{gile2018methods,10.1145/1150402.1150479,10.1145/2601438,kolaczyk_2017,10.1145/3442202}. Rigorous understanding of network sampling mechanisms for the model considered in this paper has been intractable and was one of our motivations for proving the abstract local weak limit theory for these models. In the network science community, 
the sensitivity of various centrality measures to sampling schemes has been studied via numerics, see \cite{borgatti2005centrality,costenbader2003stability,kossinets2006effects,smith2017network,wang2012measurement}. Specific to this paper related to attributed networks, \cite{karimi2022minorities} gives a nice recent survey. %While the model (in the preferential attachment $\gamma\equiv1$ setting) was previously formulated and degree distribution analyzed in \cite{jordan2013geometric}, 
The use of the discussed model in the context of gaining insight for various questions related to bias in social networks was initiated in \cite{Wagner:2017,Karimi:2018} using fluid limits.
% , they study the large network limit degree distribution for this model and the impact of the kernel $\kappa$ and the density of different types $\mvpi$ on this degree distribution and then compare the insight provided by the model for empirical network data.
An in-depth numerical analysis of this model for various sampling schemes
% , for providing an accurate representation and ranking of various groups, especially in the tail of the distribution,
was carried out in \cite{Wagner:2017}. 
% Inequality (as measured by Gini coefficient) for PageRank type centrality scores and inequity (as measured by representation of minorities amongst high centrality ranking nodes) was studied in \cite{espin2022inequality} using a model similar to the one in this paper (but with added phenomenon such as activity parameters for each vertex).
% ; the authors, through numerical simulations, study the effect of homophily and heterophily in amplifying or reducing such phenomena and compare real world data to the model via estimation of the driving parameters in the model. In this paper they further emphasized the importance of \emph{directed} network models.
In \cite{espin2018towards}, the main goal was to predict attribute information  for unlabelled vertices, where partial information is provided through a sampled set of vertices.
 % along with the induced subgraph and attribute information of the sampled vertices; the goal was to understand the impact of various sampling algorithms and corresponding classifiers for predicting attribute information for the rest of the network; once again the class of models studied in this paper was used to provide insight into such questions. 
 In \cite{ferrara2022link}, these questions were taken one step further to understand the impact of recommender systems on the structure of the network (for example, reinforcing popularity bias). 
}

\color{black}

\subsection{Future work}\label{futw}
We started the paper by describing the general setting for the model with preferential attachment parameter $\gamma \in [0,1]$ but then, to keep this paper to manageable length, and to tackle the main models studied till date in the literature, we specialized to the setting of $\gamma \in \set{0,1}$, describing the ramifications of resolvability that allows one to relate the $\scrP$ model to a corresponding $\scrU$ model with explicit description of the parameters for the corresponding $\scrU$. In work in progress we are in the process of extending this work to (a) proving resolvability of the  general sublinear $\gamma \in (0,1)$ regime and (b) identifying the local limits in the more general attribute type space $\cS$ consisting of a compact metric space, completing the picture initiated by the upper and lower bounds on the degree distribution obtained in \cite[Theorem 2.3]{jordan2013geometric}.

\subsection{Organization of the proofs}
\label{sec:proofs-main}

{\cg 
 We start in earnest in Section \ref{sec:proofs-local} with the proof of local weak convergence in the tree regime namely Theorem \ref{thm:fringe-convergence}. Section \ref{sec:proofs-page-rank} contains all the proofs related to PageRank, namely Theorems \ref{thm:page-rank} and \ref{thm:page-rank-tails} as well as Prop. \ref{prop:lc-bound} and \ref{prop:mc-descp}. Section \ref{sec:proof-max} contains the proof of the maximal degree asymptotics, Theorem \ref{thm:max-degree}. Section \ref{sec:proofs-extensions} contains proofs of extensions such as the non-tree regime. We conclude in Section \ref{sec:net-sampling} with proofs related to sampling including Theorem \ref{thm:network-sampling} and \ref{thm:rare-sampling}.  }

% \subsection{Preliminary estimates and notation}

% \subsubsection{\bf Notation}

% We conclude this Section with the Proof of Proposition \ref{prop:left-eigen}.
%
%  Since $\vM$ has strictly positive entries, by Perron-Frobenius theorem it is enough to show that $\mvpi$ is a left eigen-vector with eigen-value one. By the last identity in \eqref{eqn:phi-def}, for any $a\in [K]$, $2-\phi_a = \pi_a/\eta_a$. Thus for any $b\in [K]$,
% \begin{align}
% 	(\mvpi \vM)_b = \sum_{a\in [K]} \pi_a \vM_{a,b} =\sum_{a\in [K]} \pi_a \frac{\eta_a}{\pi_a} \frac{\kappa_{a,b}\pi_b}{\sum_{l\in [K]} \kappa_{l,b} \eta_l} = \pi_b \notag.
% \end{align}
% \qed

\section{Proofs: Local weak convergence}
\label{sec:proofs-local}
The main goal of this section is to prove Theorem \ref{thm:fringe-convergence}. 
% For the rest of this {\cg paper}, for the continuous time multi-type branching process $\BP_{a; (1,\mvnu, \kappa)}(\cdot)$, when possible and clear from context,  we will suppress dependence on the driving parameters $(1, \mvnu, \kappa)$ to ease notation and write this process as $\BP_a$ (for initial type $a\in [K]$).  Further we will use the same notation $\set{\BP_a(t):t\geq 0}$ for the natural filtration of the process. For any $t\geq 0$ and vertex  $v\in \BP_a(t)$, we write $\deg(v,t)$ for the degree of vertex $v$ in $\BP_a(t)$ (for all vertices other than the root this is the same as $1+$ number of offspring of $v$ by time $t$). For the rest of the sequel we will only consider the finite attribute space $[K]$. 
\textcolor{black}{Recall the two definitions of local weak convergence, in particular Definition \ref{def:local-weak}(a). {\cg Recall the asserted limit $\BP_{A;(1,\mvnu, \kappa)}(\tau)$ in Theorem \ref{thm:fringe-convergence}, namely the multi-type branching process {\bf stopped} at a random finite time $\tau\sim \Exp(2)$.} Observe that the expected number of children of the ancestor in $\BP_{A;(1,\mvnu, \kappa)}(\tau)$ is
$
\sum_{a,b \in [K]} \pi_a \vM_{a,b} =1
$
using Proposition \ref{prop:left-eigen}. Hence, using \cite[Proposition 3]{aldous-fringe} (adapted to our attributed network setting), we conclude that $\cL(\BP_{A;(1,\mvnu, \kappa)}(\tau))$ is a fringe distribution in the sense of Definition \ref{fringedef}. Thus, by Lemma \ref{ftoeflemma}, the following is enough to complete the proof of Theorem \ref{thm:fringe-convergence}.}

 \begin{theorem}
 	\label{thm:just-fringe-convg}
	Let $\set{\cG_n}_{n\geq 0} \sim\scrP(1, \mvpi, \kappa)$ as in the setting of Theorem \ref{thm:fringe-convergence}. Then 
	\[\cG_n \probfr \cL(\BP_{A;(1,\mvnu, \kappa)}(\tau)), \qquad \mbox{ as } n\to\infty, \]
	where $\set{\BP_{A;(1,\mvnu, \kappa)}(t):t\geq 0}$, is a Branching process as in Definition \ref{def:model-u-corr-p} where $A\sim \mvpi$ and $\tau$ is an independent exponential random variable with rate $\lambda =2$.
 \end{theorem}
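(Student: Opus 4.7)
The plan is to exploit the \emph{resolvability} of $\scrP(1,\mvpi,\kappa)$, transferring fringe convergence from its CTBP-embeddable sibling $\scrU(1,\mvnu,\kappa)$ (via Lemma \ref{lem:embedding-ok}) with $\nu_b$ as in \eqref{eqn:phi-def}. The argument runs in three linked stages.

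\emph{Step 1: Stochastic approximation in $\scrP$.} Following the strategy of \cite{jordan2013geometric}, introduce the normalized type-degree sums $Y_n^{(a)} := (2n)^{-1}\sum_{v\in\cG_n,\,a(v)=a}\deg(v,n)$ and write the one-step increments of $\vY_n = (Y_n^{(a)})_{a\in[K]}$ as a Robbins-Monro recursion with drift proportional to $-\nabla V_\mvpi$. Lemma \ref{lem:minimizer} identifies $\mveta$ as the unique interior minimizer of $V_\mvpi$, and the uniform ellipticity provided by Assumption \ref{ass:finite-case} together with standard non-convergence-to-unstable-equilibria arguments \cite{pemantle2007survey} give $\vY_n \convas \mveta$. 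Substituting into the $\scrP$ evolution formula \eqref{eqn:726}-\eqref{eqn:912old}, a direct computation shows
\[
	\pr\bigl(a(v_{n+1})=a^*,\,v_{n+1}\!\leadsto v\,\big|\,\cG_n\bigr) = \frac{\pi_{a^*}\,\kappa(a(v),a^*)\,\deg(v,n)}{2n\sum_{l\in[K]}\kappa(l,a^*)\eta_l}\bigl(1+o(1)\bigr)\quad\text{a.s.}
\]
By the definition of $\nu_{a^*}$ and the identity $\sum_l\nu_l\sum_{a'}\kappa(a',l)\eta_{a'}=\sum_l\pi_l=1$, this one-step kernel asymptotically matches that of $\scrU(1,\mvnu,\kappa)$ given by \eqref{eqn:model-scu}, confirming the resolvability in the sense of Definition \ref{def:resolv}.

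\emph{Step 2: Fringe convergence for $\scrU$.} Via Lemma \ref{lem:embedding-ok}, represent $\cG_n$ under $\scrU$ as $\BP_{a_0}(T_n)$. A type-$r$ ancestor reproduces at total rate $\phi_r(\xi(t)+1)$ with subtype-$b$ intensity $\phi_{r,b}(\xi(t)+1)$, so the expected subtype-$b$ reproduction measure is $\phi_{r,b}e^{\phi_r t}\,dt$, whose Laplace transform at $\alpha>\max_r\phi_r$ equals $\phi_{r,b}/(\alpha-\phi_r)$; at $\alpha=2$ this is precisely $\vM_{r,b}$ from \eqref{eqn:matrix-def}. Proposition \ref{prop:left-eigen} thus identifies the Malthusian rate as $\alpha^*=2$ with left eigenvector $\mvpi$ (the stable type distribution). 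Invoking Nerman's strong law for supercritical multi-type CTBPs \cite{nerman1981convergence,jagers1996asymptotic}, the empirical joint distribution over vertices in $\BP_{a_0}(T_n)$ of (type, age) converges a.s. to $\mvpi\otimes\Exp(2)$; the Markov branching property then identifies the induced empirical fringe distribution's a.s. limit as $\cL(\BP_{A}(\tau))$ with $A\sim\mvpi$ and $\tau\sim\Exp(2)$ independent, which is exactly \eqref{eqn:fringe-limit}.

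\emph{Step 3: Transfer $\scrU \to \scrP$.} Couple $\scrP$ and $\scrU$ on a common probability space so that, whenever possible, the $(n+1)$-st vertex is attached to the same parent in both processes; by Step 1 the Radon-Nikodym tilt between the two attachment kernels is $1+o(1)$ uniformly in $v$, so a Borel-Cantelli / Doob decomposition argument shows that the fraction of vertices at which the fringes of the two coupled trees disagree tends to $0$ a.s. This yields $\fP_n^0 \convas \cL(\BP_A(\tau))$ under $\scrP$, proving the theorem.

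The main obstacle lies in Step 2: the multi-type extension of the single-type Rudas-T\'oth-Valk\'o fringe-convergence framework \cite{rudas2007random} requires joint stable-age-and-type convergence for a CTBP whose reproduction rates are unbounded (degree-driven Yule processes), so the $x\log x$-type integrability conditions underlying Nerman's strong law must be verified; fortunately these follow from Assumption \ref{ass:finite-case} together with explicit Laplace-transform estimates on the Yule offspring processes. A secondary subtlety in Step 3 is that the tilt between $\scrP$ and $\scrU$ is only an asymptotic martingale, so propagation of one-step errors through each vertex's fringe requires a telescoping bound across the $O(n)$ attachment steps contributing to the empirical measure.
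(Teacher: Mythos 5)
Your proposal takes a genuinely different route from the paper: the paper never couples $\scrP$ to $\scrU$. Instead it runs the stochastic approximation \emph{directly on the empirical fringe counts} of $\scrP$: for each fixed $\vt$ it writes a one-step recursion for $\fP_n(\vt)$ whose coefficients involve $\tilde Y^{(n)}_a$, invokes $\tilde Y^{(n)}_a\convas\eta_a$ (your Step 1), applies a stochastic-approximation lemma (Lemma \ref{lem:jordan}) inductively in $|\vt|$, and then identifies the limiting recursion's unique fixed point with $\cL(\BP_{A}(\tau))$ by an explicit exponential-race computation over historical orderings (Proposition \ref{prop:prop-of-fp}), rather than via Nerman's theorem. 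Your Steps 1 and 2 are sound in outline (Step 1 is \cite[Proposition 3.1]{jordan2013geometric}; Step 2 is the multi-type extension of \cite{rudas2007random}, and the integrability conditions you flag are indeed verifiable for these Yule-type offspring processes since $2>\max_a\phi_a$).

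The genuine gap is Step 3. The statement that the one-step attachment kernels of $\scrP$ and $\scrU$ differ by a factor $1+o(1)$ is only meaningful when the two processes are evaluated \emph{in the same state} $\cG_n$; the moment the coupling fails once, the two trees (and hence all degrees) differ, and the subsequent kernels are computed on different graphs, so the per-step comparison no longer applies and errors can compound rather than stay "asymptotically martingale". Even granting that the per-step total-variation discrepancy $\eps_n\to0$, you have no rate: $\sum_n\eps_n$ may well diverge (fluctuations of $\vY_n$ about $\mveta$ are typically of order $n^{-1/2}$), so a Borel--Cantelli argument does not give finitely many failures, and a whole-trajectory change-of-measure gives a Radon--Nikodym derivative $\prod_k(1+O(\eps_k))$ that is not controlled. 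Finally, even if one could arrange that only $o(n)$ vertices are misplaced, each misplaced vertex perturbs the fringe of every one of its ancestors (typically $\Theta(\log n)$ of them), so the fraction of vertices whose fringes disagree is not obviously $o(1)$. To repair the argument one would essentially have to redo the fringe-count analysis intrinsically in $\scrP$ --- which is exactly what the paper's proof does, reserving the CTBP only for identifying the limit of the resulting recursion.
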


\begin{proof}
{\cg Before diving into proof, let us provide an outline. There are two major components to the proof:
\begin{enumeratea}
\item {\bf Properties of the asserted limit:} First we show in Proposition \ref{prop:prop-of-fp} that the asserted limit, namely the probability distribution $\cL(\BP_{A;(1,\mvnu, \kappa)}(\tau))$ has an explicit formula given by \eqref{eqn:pm-def} and is the {\bf unique distribution} on the space of attributed trees admitting the specific recursive description in Proposition \ref{prop:prop-of-fp}(b). 
\item {\bf Stochastic approximation for fringe distributions of $\scrP$:} Next, we study the evolution of the fringe distributions of $\scrP$, culminating in \eqref{eqn:up-up-away}. Using techniques from stochastic approximation (in particular Lemma \ref{lem:jordan} proven in \cite{jordan2013geometric}) we show that these evolution equations imply that the fringe distributions converge almost surely to a limit that satisfies the recursive equation in  Proposition \ref{prop:prop-of-fp}(b). By uniqueness established in the first step, this completes the proof. 
\end{enumeratea}
}
{\cg Let us now commence the proof. }	We start with some terminology following \cite{rudas2007random}.  Let $\overrightarrow{\bbT_{\cS}}$ be the space $\bbT_{\cS}$ but where for every element in $\vt$:
	\begin{enumeratea}
		\item The vertices of $\vt$ are labelled using the Ulam-Harris set $\cN = \cup_{n=0}^\infty \bN^n$ with $\bN^0 = \set{\emptyset}$ denoting the root of $\vt$. This in particular gives the birth order of the children of each vertex. {\cg Thus for example label $1\in \bN$ represents the first (oldest) child of the root, $13\in \bN^2$ represents the third oldest child of the first offspring of the root etc.   }
		\item Every edge has a direction from child to parent. 
	\end{enumeratea}
Write $\overrightarrow{\bbT}$ for the space of such trees where we ignore attribute information.   Note that the topology of any tree in $\overrightarrow{\bbT}$ is uniquely determined by its list of vertex labels. 

For $\vt\in \overrightarrow{\bbT}$, let $\cH(\vt)$ denote the collection of all historical orderings of $\vt$. In brief, $\sch_{\vt} = (v_0 = \emptyset, v_1,\ldots, v_{|\vt|-1}) \subseteq \cN^{|\vt|} $ is a historical ordering of $\vt$ if it gives a possible birth ordering of vertices in $\vt$ starting from $\emptyset$ till its completion; formally for each $0\leq i\leq |\vt| -1$, $\cT(\sch_{\vt},i) := \set{v_0, v_1, \ldots, v_i} \in \overrightarrow{\bbT}$. %For future reference, we might want to distinguish the tree from the ordering, in this case we also write $\sch_{\vt, i} = (v_0, v_1, \ldots, v_i)$ for the corresponding order of the vertices. 

\textcolor{black}{Next, recall the functionals $\phi_{\cdot, \cdot}$ and $\phi_{\cdot}$ defined in \eqref{eqn:phi-def}. Given a fixed tree $\cT \in \overrightarrow{\bbT_{\cS}}$, for $0\leq i\leq |\cT|-1 $, let $\rho_i =a({v_i})$ denote the attribute of the $i$-th vertex and $\deg({v_i},\cT)$ denote the degree of ${v_i}$.    Define the weight of $\cT$ as:
\begin{equation}\vW(\cT) := \sum_{i=0}^{|\cT| -1} \phi_{\rho_i} \deg(v_i, \cT). \label{eqn:haalp2}\end{equation}}
 Next given  $\vt \in \overrightarrow{\bbT_{\cS}}$ and a historical ordering $\sch_{\vt} \in \cH(\vt)$, define the following sequence of weight functionals: 
\textcolor{black}{\begin{align}
	\vW(\vt, \sch_{\vt}, k)&:= \vW(\cT(\sch_{\vt}, k)), \qquad 0\leq k\leq |\vt|-1, \label{eqn:haalp} \\ 
	 \vw_{\cic}(\vt, \sch_{\vt}, k+1) &:= \deg(v_k,\cT(\sch_{\vt}, k))  \phi_{\rho_{k}, \rho_{k+1} }, \qquad 0\leq k\leq |\vt|-2, \label{eqn:weight-order-def}
\end{align}}
\textcolor{black}{where $\rho_k, \rho_{k+1}$ denote the attributes of the $k$-th and $(k+1)$-th vertices in the historical ordering $\sch_{\vt}$.}
% We specify the convention followed for the second term when $|\vt|\equiv 1$).  
Note that for any such historical ordering, the final term in {\cg \eqref{eqn:haalp}, namely with $k=|\vt|-1$, is the same and matches the expression in \eqref{eqn:haalp2} }, irrespective of the ordering:
\begin{equation}
\label{eqn:807}
	\vW(\vt, \sch_{\vt}, |\vt|-1) = \sum_{i=0}^{|\vt|-1} \phi_{\rho_i} \deg(v_i, \vt) = \vW(\vt). 
\end{equation}

Next given $\vt \in \overrightarrow{\bbT_{\cS}}$ note that each vertex $v\in \vt$ (with non-zero children) has an ordering of the children from oldest to youngest, and thus one can talk about the youngest child $y_{v}$ of $v$. For $v \in \vt$ define the indicators,
\begin{equation}
\label{eqn:ind-def}
	I_v:= \ind\set{v \mbox{ has a child in $\vt$ and $y_{v}$ is a leaf in $\vt$}}. 
\end{equation}
If $I_v= 1$, denote by $\vt^{\sss(v)}$ the tree obtained by deleting the youngest child of $v$ (which is necessarily a leaf). Let $\rho_{y_v}$ denote the attribute type of the youngest child of $v$. 

Define the measure, 
\begin{equation}
\label{eqn:pm-def}
	\fp(\vt):= \sum_{\sch_{\vt} \in \cH(\vt)} \frac{2 \pi_{\rho_0}}{2+\vW(\vt)} \prod_{k=0}^{|\vt|-2} \left[\frac{\vw_{\cic}(\vt, \sch_{\vt}, k+1)}{2+ \vW(\vt, \sch_{\vt}, k)}\right], \qquad \vt \in \overrightarrow{\bbT_{\cS}}. 
\end{equation}
Here the product above is taken to be one if $|\vt|=1$. Recall the asserted limit $\fpm_{\infty}$ of the fringe distribution \eqref{eqn:fringe-limit}. Analogously for each $a\in [K]$, write $\fpm_{a, \infty} = \cL(\BP_{a; (1,\mvnu, \kappa)}(\tau))$ where as before $\tau\sim \Exp(2)$ independent of the branching process. 
\begin{prop}
	\label{prop:prop-of-fp}
	\begin{enumeratea}
		\item The measure $\fp(\cdot)$ in \eqref{eqn:pm-def} is in fact a probability measure and has the equivalent description, 
		\[\fp(\vt) = \pi_{\rho_0} \fpm_{\rho_0, \infty}(\vt), \qquad \vt \in \overrightarrow{\bbT_{\cS}}, \]
		where $\vt \in \overrightarrow{\bbT_{\cS}} $ with root type $\rho_0$. 
		\item $\fp(\cdot)$ is the unique measure on $\overrightarrow{\bbT_{\cS}}$ satisfying the recursive equation 
		\begin{equation}
		\label{eqn:recur-first-part}
			\fpm(\vt) = \frac{\sum_{v\in \vt} I_v  \cdot  \fpm(\vt^{\sss(v)})\frac{\deg(v,\vt)-1}{2} \phi_{\rho_v , \rho_{y_v}}}{1+\sum_{v\in \vt} \phi_{\rho_v} \frac{\deg(v,\vt)}{2}},
		\end{equation}
		with boundary conditions for $\vt = \set{v_0, a(v_0) = b}$, i.e. a tree consisting of a single vertex of type $b\in [K]$, given by 
		\begin{equation}
		\label{eqn:recur-boundary}
			\fpm(\vt) = \frac{2\pi_b}{2+ \phi_{b}}, \qquad b\in [K]. 
		\end{equation}
	\end{enumeratea}
\end{prop}

\begin{proof}[{\bf Proof of Proposition \ref{prop:prop-of-fp}}:] We start with (a). The proof follows the similar lines as the proof of \cite[Theorem 2(b)]{rudas2007random}. Fix $\vt \in \overrightarrow{\bbT_{\cS}} $ and assume the root is of type $a\in [K]$. For the rest of the proof write $\BP(\cdot) = \BP_{a; (1,\mvnu, \kappa)}(\cdot)$; we will continue to use $\pr_a$ and $\E_a$ for the probability and expectation operators for the process $\BP$ to remind us that the process starts with root of type $a$.
	
	 For any finite time $t>0$ we will also view $\BP(t)$ as a random element of $\overrightarrow{\bbT_{\cS}}$. Note that for $t=0$, $\BP(0)$ is a single vertex of type $a$ and thus $\BP(0) \subseteq \vt$.  Define the stopping times, 
	\[\tau_{\vt}:= \sup\set{t\geq 0: \BP(t) \subseteq \vt},\]
	and 
	\[\tau_{\vt}^\prime:= \sup\set{t\geq 0: \BP(t) \subsetneq \vt}.\]
	By Fubini, 
	\begin{equation}
	\label{eqn:301}
		\pr(\BP(\tau) = \vt) = \int_0^\infty 2e^{-2t} \pr_a(\BP(t) = \vt)dt = \E_a\left[\left(e^{-2\tau_{\vt}^\prime} - e^{-2\tau_{\vt}} \right)\ind\set{\tau_{\vt}^\prime < \tau_{\vt}}\right]. 
	\end{equation}
	Now on the event $\set{\tau_{\vt}^\prime < \tau_{\vt}}$, $\BP(\tau_{\vt}^\prime) = \vt$ and the combined rate of evolution to the next transition is $\vW(\vt)$. Thus conditional on $\set{\tau_{\vt}^\prime < \tau_{\vt}}$, \[\tau_{\vt} - \tau_{\vt}^\prime \sim \Exp(\vW(\vt)),\] independent of $\BP(\tau_{\vt}^\prime)$.  Using this in \eqref{eqn:301} gives, 
	\begin{align}
		\pr(\BP(\tau) = \vt) &= \frac{2}{2+\vW(\vt)} \E_a(e^{-2\tau_{\vt}^\prime} \ind\set{\tau_{\vt}^\prime < \tau_{\vt}}) \notag\\
		&=\frac{2}{2+\vW(\vt)} \sum_{\sch \in \cH(\vt)} \E_a(e^{-2T_{|\vt|-1}} \ind\set{(\eta_0, \eta_1, \ldots, \eta_{|\vt|-1}) = \sch}),\label{eqn:302}
	\end{align}
	where $\eta_i$ is the label of the $i$-th born vertex into $\BP$ and
	\begin{equation}
	\label{eqn:313}
		T_k := \inf\set{t\geq 0: |\BP(t)| = k+1}, \qquad k\geq 0. 
	\end{equation}
	By properties of exponential random variables and the dynamics of the process $\BP$, for any fixed historical ordering $\sch = (s_0, s_1, \ldots, s_{|\vt|-1})$,
	\begin{equation}
	\label{eqn:322}
		\pr_a((\eta_0, \eta_1, \ldots, \eta_{|\vt|-1}) = (s_0, s_1, \ldots s_{|\vt|-1})) = \prod_{k=0}^{|\vt|-2} \left[\frac{\vw_{\cic}(\vt, \sch, k+1)}{\vW(\vt, \sch, k)}\right].
	\end{equation}
	Further conditional on the event $\set{(\eta_0, \eta_1, \ldots, \eta_{|\vt|-1}) = (s_0, s_1, \ldots s_{|\vt|-1})}$, $\set{T_{k+1} - T_k:0\leq k\leq |\vt|-2}$ are conditionally independent with $T_{k+1} - T_k \sim \Exp(\vW(\vt, \sch, k))$. Thus we get for each $\sch=(s_0, s_1, \ldots s_{|\vt|-1})\in \cH(\vt)$, 
	\begin{align}
	\E_a(e^{-2T_{|\vt|-1}} \ind\set{(\eta_0, \eta_1, \ldots, \eta_{|\vt|-1}) = \sch}) &= \prod_{k=0}^{|\vt|-2} \left[\frac{\vw_{\cic}(\vt, \sch, k+1)}{\vW(\vt, \sch, k)}\right]	\prod_{k=0}^{|\vt|-2} \frac{\vW(\vt, \sch, k)}{2+\vW(\vt, \sch, k)} \notag\\
	& = \prod_{k=0}^{|\vt|-2} \left[\frac{\vw_{\cic}(\vt, \sch, k+1)}{2+\vW(\vt, \sch, k)}\right]. 	\label{eqn:334}
	\end{align}
	Using the final expression in \eqref{eqn:334} in \eqref{eqn:302} and comparing with the definition of $\fp$ in \eqref{eqn:pm-def} completes the proof of (a). 
	
	Let us now prove part (b) of the Proposition; for this it is enough to show:
	\begin{enumeratei}
		\item $\fp$ satisfies the boundary conditions namely \eqref{eqn:recur-boundary}. This is immediate by the definition of $\fp$ in \eqref{eqn:pm-def}. 
		\item $\fp$ satisfies the recursive equation \eqref{eqn:recur-first-part}: Now consider $\vt$ with $|\vt|\geq 2$. For ease of notation, for any vertex $u\in \vt$, write $d_u = \deg(u,\vt) $ for the degree in $\vt$.  Let $\cY_{\vt}:=\set{v: I_v =1}$ denote the collection of vertices whose youngest child is a leaf. Note that the historical orderings $\cH(\vt)$ can be partitioned as the disjoint union 
		\[\cH(\vt) = \sqcup_{u\in \cY_{\vt}} \set{\sch_{\vt} = (v_0, v_1, \ldots, v_{|\vt|-1}) \in \cH(\vt): v_{|\vt|-1} = y_u }=: \sqcup_{u\in \cY_{\vt}} \tilde{\cH}^{\sss(u)}(\vt),\] 
		i.e. based on the identity of the final vertex (necessarily a leaf) that was adjoined to form $\vt$. \textcolor{black}{Further $\tilde{\cH}^{\sss(u)}(\vt)$ can be viewed as all historical orderings of the tree $\vt^{\sss(u)}$ followed by the addition of $y_u$ in the final step to obtain $\vt$. Thus with minor {\cg abuse} of notation, we will write $\tilde{\cH}^{\sss(u)}(\vt) = {\cH}(\vt^{\sss(u)})$}. Now starting with the form of $\fp$ in \eqref{eqn:pm-def}, using the above partition of $\cH(\vt)$ and the from of the weight sequences in \eqref{eqn:weight-order-def}, we get, 
		\begin{align*}
			\fp(\vt) &= \sum_{u\in \cY_{\vt}} \sum_{\sch \in \cH(\vt^{\sss(u)})} \frac{2 \pi_{\rho_0}}{2+\vW(\vt)} \prod_{k=0}^{|\vt^{\sss(u)}|-2} \left[\frac{\vw_{\cic}(\vt^{\sss(u)}, \sch_{\vt^{\sss(u)}}, k+1)}{2+ \vW(\vt^{\sss(u)}, \sch_{\vt^{\sss(u)}}, k)}\right]\frac{(d_u-1) \phi_{\rho_u, \rho_{y_u}}}{(2+\vW(\vt^{\sss(u)}))}, \\
			&= \sum_{u\in \cY_{\vt}} \sum_{\sch \in \cH(\vt^{\sss(u)})} \frac{2 \pi_{\rho_0}}{2+\vW(\vt^{\sss(u)})} \prod_{k=0}^{|\vt^{\sss(u)}|-2} \left[\frac{\vw_{\cic}(\vt^{\sss(u)}, \sch_{\vt^{\sss(u)}}, k+1)}{2+ \vW(\vt^{\sss(u)}, \sch_{\vt^{\sss(u)}}, k)}\right]\frac{(d_u-1) \phi_{\rho_u, \rho_{y_u}}}{(2+\vW(\vt))}, \\ 
			&=  \frac{\sum_{u\in \cY_{\vt}} \fp(\vt^{\sss(u)})(d_u-1)\phi_{\rho_u, \rho_{y_u}} }{2+\vW(t)}, \quad \mbox{using \eqref{eqn:pm-def}.}
		\end{align*}
		Using the form $\vW$ in \eqref{eqn:807} completes the proof. 
	\end{enumeratei}  
\end{proof}

\noindent {\bf Proof of Theorem \ref{thm:just-fringe-convg}:} We assume that $\set{\cG_n:n\geq 0}$ is constructed as an increasing sequence of networks on a common probability space so we can make statements a.s. Each $\cG_n$ is viewed as an element in $\overrightarrow{\bbT_{\cS}}$ with children of each vertex ordered according to their order of birth into the system. From Section \ref{sec:fri-decomp}, recall that for $\vt\in \overrightarrow{\bbT_{\cS}}$ and $v\in \vt$, $f_0(v,\vt) \in \overrightarrow{\bbT_{\cS}}$ denotes the fringe of vertex $v \in \vt$. Writing $n_0 = |\cG_0|$ and define the empirical count and fringe density in $\cG_n$ respectively via, 
\[c_n(\vt):= \sum_{v\in \cG_n}\ind\set{f_0(v,\cG_n) = \vt}, \qquad \fP_n(\vt):= \frac{c_n(\vt)}{n+n_0} , \qquad \vt\in \overrightarrow{\bbT_{\cS}}.\]  
  For the rest of the argument $\vt$ will be a fixed element of $\overrightarrow{\bbT_{\cS}}$. \textcolor{black}{The following quantity gives the average degree of vertices with a given attribute type}:
\begin{equation}
\label{eqn:540}
	\tilde{Y}_a^{\sss(n)}:= \frac{\sum_{v\in \cG_n: a(v) = a} \deg(v,n)}{2(n+n_0)}, \qquad a\in [K],
\end{equation} 
The next two definitions are relative to the fixed $\vt$:
\begin{equation}
\label{eqn:541}
	\Phi_{n,u, {y_u}}:=\frac{\kappa(\rho_u, \rho_{y_u})\pi_{\rho_{y_u}}}{\sum_{a\in [K]} \kappa(a,\rho_{y_u})\tilde{Y}_a^{\sss(n)}}, \quad u\in \cY_{\vt},  \qquad \Psi_{n,u}:=\sum_{a\in [K]} \frac{\kappa(\rho_{u}, a) \pi_a}{\sum_{b\in [K]} \kappa(b,a)\tilde{Y}_b^{\sss(n)}},\quad  u\in \vt. 
\end{equation}
Then observe that, by the dynamics of the process $\set{\cG_n:n\geq 0}$, 
\begin{equation}
\label{eqn:626}
	\pr(c_{n+1}(\vt) = c_n(\vt)+1|\cG_n) = \sum_{u\in \cY_{\vt}} c_n(\vt^{\sss(u)})\left[ \Phi_{n,u, y_{u}} ~\cdot ~ \frac{(d_u-1)}{2(n+n_0)}\right],
\end{equation}
where as before $d_u = \deg(u,\vt)$. Further, 
\begin{equation}
\label{eqn:633}
	\pr(c_{n+1}(\vt) = c_n(\vt)-1|\cG_n) = c_n(\vt) \sum_{u\in \vt} \frac{\Psi_{n,u}\cdot d_u}{2(n+n_0)}. 
\end{equation}
Combining \eqref{eqn:626} and \eqref{eqn:633} and dividing throughout by $(n+n_0+1)$ gives (with $I_u$ as in \eqref{eqn:ind-def}):
\begin{align*}
	\E(\fP_{n+1}(\vt)|\cG_n) =\frac{1}{n+n_0+1}\bigg[&\left((n+n_0) - \sum_{u\in \vt} \Psi_{n,u}\cdot \frac{d_u}{2}\right)\fP_n(\vt)\\
	&+ \sum_{u\in \vt} \fP_n(\vt^{\sss(u)})\left( \Phi_{n,u, y_{u}} ~\cdot ~ \frac{(d_u-1)}{2}\right)I_u \bigg],
\end{align*}
taking by convention terms in the second sum with $I_u=0$ as $0$.
Rearranging gives, 
\begin{align}
	\E(\fP_{n+1}(\vt)|\cG_n) - \fP_n(\vt) = \frac{1}{n+n_0+1}&\bigg[\sum_{u\in \vt} \fP_n(\vt^{\sss(u)})\left( \Phi_{n,u, y_{u}} ~\cdot ~ \frac{(d_u-1)}{2}\right)I_u \notag \\
	&-\left(1+\sum_{u\in \vt} \Psi_{n,u}\cdot \frac{d_u}{2}\right)\fP_n(\vt) \bigg]. \label{eqn:up-up-away}
\end{align}
Now note that the first term on the right hand side involves empirical proportions of trees which are strict sub-trees of $\vt$. This suggests that if proportions of these trees converge, then under regularity conditions so should $\fP_n(\vt)$, where the limit satisfies a recursive equation inherited from \eqref{eqn:up-up-away}. We now implement this program. We first paraphrase the following slight variant of the stochastic approximation result from \cite{jordan2013geometric}. The proof is a slight modification of \cite[Lemma 3.3]{jordan2013geometric} and \cite[Lemma 2.6]{pemantle2007survey} and is omitted. {\cg Applying this to the evolution equation in \eqref{eqn:up-up-away} will complete the proof of Theorem \ref{thm:just-fringe-convg}. }
\begin{lemma}[{\cite[Lemma 3.3]{jordan2013geometric}}]\label{lem:jordan}
	Suppose $\set{A_n, B_n, \xi_n, R_n :n\geq 1}$ be real valued random variables. Suppose $a \in \mathbb{R}$ and $k>0$ are constants. Suppose that 
	\[B_{n+1} - B_n = \frac{1}{n}(A_n - kB_n +\xi_n) + R_{n+1}.\]
	Further suppose 
	\begin{enumeratei}
		\item $A_n \convas a$ as $n\to\infty$. 
		\item $\sum_{n=1}^{\infty} R_n <\infty$ a.s.
		\item $\E(\xi_n) = 0$ and $\xi_n$ is bounded.  
		\item $\sup_n B_n < \infty$ a.s.
	\end{enumeratei}
	Then $B_n\convas a/k$ as $n\to\infty$.  
\end{lemma}
{\cg If $\vt$ is a tree consisting of a single vertex $\vt = \set{v}$ of attribute type $a(v)=a$ then note that the empirical fringe proportion $\fP_n(\vt)$ is just the proportion of leaf vertices of type $a$. The main goal of \cite{jordan2013geometric} was understanding asymptotics for the degree distribution in this model.  } Thus, by \cite[Theorem 2.2]{jordan2013geometric}, 
\begin{equation}
\label{eqn:141}
	\fP_n(\vt) \convas \varpi(\vt):= \frac{2\pi_a}{2+\phi_a} = \fp(\vt).
\end{equation}
Fix $l\geq 2$ and assume that for 
\begin{equation}
\label{eqn:hypo-vt}
	\forall ~\tilde{\vt} \in \overrightarrow{\bbT_{\cS}}  \mbox{  with } |\tilde{\vt}| \leq (l-1), \qquad \fP_n(\tilde{\vt}) \convas \varpi(\tilde{\vt}), \qquad \mbox{ as } n\to \infty, 
\end{equation}
for some positive limit constant $\varpi(\tilde{\vt})$ which satisfies the boundary condition in \eqref{eqn:141}. Let $\vt \in\overrightarrow{\bbT_{\cS}} $ with $|\vt| = l$ and consider the evolution equation in \eqref{eqn:up-up-away}. We note:
\begin{enumeratea}
	\item By \cite[Proposition 3.1]{jordan2013geometric}, the functional defined in \eqref{eqn:540},
	\begin{equation}
	\label{eqn:8522}
		\tilde{Y}_a^{\sss(n)} \convas \eta_a \mbox{ with } \mveta = (\eta_1, \ldots, \eta_K) \mbox{ as in Lemma \ref{lem:minimizer}.} 
	\end{equation}  
	\item Thus for the functionals in \eqref{eqn:541}, for each $u\in \vt$,
	\[\Psi_{n,u} \convas \sum_{a\in [K]} \frac{\kappa(\rho_{u}, a) \pi_a}{\sum_{b\in [K]} \kappa(b,a)\eta_b} = \phi_{\rho_u}.  \]
\item Further, for each $u\in \cY_{\vt}$, 
\[\Phi_{n,u, y_u} \convas \phi_{\rho_u, \rho_{y_u}}, \qquad \mbox{ as } n\to\infty. \]
\end{enumeratea}
Now we will use Lemma \ref{lem:jordan}; mimicking the notation in this Lemma:
\begin{enumeratei}
	\item  $B_n = \fP_n(\vt)$;
	\item Next let, 
	\[A_n = \sum_{u\in \vt} \fP_n(\vt^{\sss(u)})\left( \Phi_{n,u, y_{u}} ~\cdot ~ \frac{(d_u-1)}{2}\right)I_u - \sum_{u\in \vt} \left(\Psi_{n,u} - \phi_{\rho_u} \right)\cdot\frac{d_u}{2}\cdot \fP_n(\vt), \]
\item Finally define,
\[k =\left(1+\sum_{u\in \vt} \phi_{\rho_u}\cdot \frac{d_u}{2}\right), \qquad \xi_n = (n+n_0+1)(\fP_{n+1}(\vt) - \E(\fP_{n+1}(\vt)|\cG_n)), \qquad R_n = 0. \]
\end{enumeratei}  
Clearly, $B_n \le 1$ for all $n$. Also, note that
$$
|\xi_n| \le |(n+n_0+1)\fP_{n+1}(\vt) - (n+n_0)\fP_{n}(\vt)| + \E[|(n+n_0)\fP_n(\vt) - (n+n_0+1)\fP_{n+1}(\vt)| \, |\cG_n)] \le 2.
$$
Thus by Lemma \ref{lem:jordan} and hypothesis in \eqref{eqn:hypo-vt} we get that $\fP_n(\vt) \convas \fpm(\vt)$, where $\fpm(\cdot)$ satisfies the recursive equation in \eqref{eqn:recur-first-part}. Using Proposition \ref{prop:prop-of-fp} now completes the proof of Theorem \ref{thm:just-fringe-convg}. 
\end{proof}

\textcolor{black}{\subsection{Proof of Theorem \ref{thm:deg-homophil-linear}}
We first prove part (a). Fix $a \in [K]$. Define $S^a_0=0$ and $S^a_k := \sum_{l=1}^{k} \frac{E_l}{\phi_a(l + 1)}, \, k \in \mathbb{N}$, where $\{E_l\}$ are \emph{i.i.d.} $\Exp(1)$ random variables. Recall $\tau \sim \Exp(2)$. By Theorem \ref{thm:fringe-convergence} and using standard properties of exponential random variables, we obtain for any $k \ge m_a$,
\begin{align*}
\vp^a_{\infty}(k) = \pr_a\left(S^a_{k-1} \le \tau < S^a_{k}\right) = \frac{2}{2 + \phi_ak}\prod_{i=1}^{k-1}\frac{\phi_a i}{2 + \phi_a i},
\end{align*}
where the product is taken to be $1$ if $k=1$. Part (a) follows from the above.}

%Let $p = |\cE_n|/{n \choose 2} $ be the edge density. For $a\in \cS$,  $D_{n, a} = |\cE_{n, aa}|/({|\cV_{n, a}| \choose 2} p) $
\textcolor{black}{To prove part (b), Note that, by Theorem \ref{thm:fringe-convergence}, as $n \rightarrow \infty$,
$$
\frac{\cE_{n, aa}}{n\pi_a} \convas \frac{\phi_{a,a}}{2-\phi_a}, \qquad  \frac{\cV_{n, a}}{n} \convas \pi_a
$$
where the first limit can be checked to equal the expected number of type $a$ children by the root in $\BP_a(\tau)$, and the second limit equals the probability of the root being of type $a$ in $\BP_{A;(1,\mvnu, \kappa)}(\tau)$. Moreover, $p_n = |\cE_n|/{n \choose 2} = \frac{2}{n}(1 + o(1))$. Combining these observations,
$$
D_{n, a} = |\cE_{n, aa}|/({|\cV_{n, a}| \choose 2} p_n) \convas \frac{\phi_{a,a}}{\pi_a(2-\phi_a)} = \frac{\vM_{(a,a)}}{\pi_a},
$$
proving the first assertion. The second assertion follows similarly.}

\section{Proofs: PageRank asymptotics}
\label{sec:proofs-page-rank}
 In this section we prove the various results relating to asymptotics for PageRank.

\subsection{\bf Proof of Theorem \ref{thm:page-rank}} This follows from the local weak convergence in Theorem \ref{thm:fringe-convergence} coupled with results in \cite{garavaglia2020local,banerjee2021pagerank}. The proof of \cite[Theorem 4.6]{banerjee2021pagerank}, with minor modifications to replace the degree with the attribute type and convergence in probability in the local weak sense with almost sure convergence, yield the result.

\subsection{\bf Proof of Theorem \ref{thm:page-rank-tails}}
{\cg Before diving into the proof, let us give an outline. We first relate the PageRank function $\cR_{\emptyset, c}(\cdot)$ to a probabilistic construction of percolation on the original branching process (see Def. \ref{def:perc-bp}). Thus, understanding asymptotic properties of PageRank scores turns out to be related to understanding the rate of growth of this percolated branching process which is summarized in Proposition \ref{prop:r-zc-moments}. This forms the heart of the proof and careful understanding of the evolution dynamics of this percolated branching process requires tools from semi-Markov renewal theory. The estimates provided by Proposition \ref{prop:r-zc-moments}  are used at the end of this section to complete the proof of Theorem \ref{thm:page-rank-tails}.  }

\begin{defn}[Percolation on $\BP_a$]
	\label{def:perc-bp}
	Fix $a\in [K]$ and damping factor $c\in (0,1)$. For any $t\geq 0$, write $\BP^c_a(t)$ for the connected cluster of the root (which is also a tree) when we retain each edge $e\in \BP_a(t)$ with probability $c$ and delete with probability $(1-c)$, independently across edges. Write $\set{\BP_a^c(t):t\geq 0}$ for the \textcolor{black}{corresponding non-decreasing rooted tree valued Markov process where children born to vertices in the connected cluster of the root are retained with probability $c$ at their time of birth.} Let $\scZ_a^c(t) = |\BP_a^c(t)|$ denote the size of the cluster at time $t$.  
\end{defn} 
Then by definition in \eqref{eqn:t-power}, if the root $\emptyset$ has type $a$, then
\begin{equation}
\label{eqn:217}
	\cR_{\emptyset, c}(t) = (1-c)\E_a(\scZ_a^c(t)|\BP_a(t)), \qquad t\geq 0. 
\end{equation}
 {\cg Recall the matrix $\vM^{\sss(c)}$ in \eqref{eqn:matrix-mc-def}  with Perron-Frobenius eigen-value $\lambda_c$ and let $\mvh = (h_1, \ldots, h_K)$ denote the corresponding (strictly positive) right eigen-vector of $\vM^{\sss(c)}$,  normalized as $\sum_{i=1}^k h_i = 1$. }

\begin{prop}\label{prop:r-zc-moments}
	\begin{enumeratea}
		\item \label{it:mom} For any $a\in [K]$ and $k\geq 1$,
	\[\sup_{t\in \bR_+} \E_a\bigg[\big(e^{-\lambda_c t } \cR_{\emptyset, c}(t)\big)^k\bigg] \leq (1-c)^k \sup_{t\in \bR_+} \E_a\bigg[\big(e^{-\lambda_c t } \scZ_{a}^c(t)\big)^k\bigg] < \infty. \] 
	\item $\exists ~\delta_1>0$ such that $\forall a\in [K]$,
	\[\liminf_{t\to \infty} \pr_a(e^{-\lambda_c t} \cR_{\emptyset, c}(t) \geq \delta_1) > 0. \]
	\end{enumeratea}
	
\end{prop}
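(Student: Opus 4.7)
The proof naturally splits according to the two parts. For part (a), the first inequality is immediate from conditional Jensen's inequality applied to \eqref{eqn:217}: since $\cR_{\emptyset,c}(t) = (1-c)\E_a[\scZ_a^c(t) \mid \BP_a(t)]$ and $x \mapsto x^k$ is convex for $k \ge 1$,
\[\cR_{\emptyset,c}(t)^k \leq (1-c)^k \E_a\bigl[\scZ_a^c(t)^k \mid \BP_a(t)\bigr],\]
and taking unconditional expectation and multiplying by $e^{-k\lambda_c t}$ yields the stated inequality.

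The main content of part (a) is the finiteness of $\sup_t \E_a[(e^{-\lambda_c t}\scZ_a^c(t))^k]$. My plan is to view $\{\BP_a^c(t)\}$ as a general multi-type continuous-time branching process in the Jagers sense, where a type $r$ individual produces kept type $s$ offspring with mean reproduction measure $\mu_{r,s}^c(dt) = c\phi_{r,s} e^{\phi_r t} dt$ (obtained from the reinforced Yule reproduction of $\BP_a$ thinned by percolation at level $c$). A direct Laplace transform computation gives $\hat\mu_{r,s}^c(\lambda) = c\phi_{r,s}/(\lambda - \phi_r)$ for $\lambda > \phi_r$, which is well defined at $\lambda = \lambda_c$ by Proposition \ref{prop:lc-bound}. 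The identity $\vM^{\sss(c)}\mvh = \lambda_c \mvh$ then translates precisely into $(\hat\mu_{r,s}^c(\lambda_c))_{r,s}\mvh = \mvh$, identifying $\lambda_c$ as the Malthusian parameter. Next, I would introduce the augmented functional
\[\tilde N(t) := \sum_{v \in \BP_a^c(t)} h_{a(v)}\bigl(\xi_v(t)+1\bigr),\]
where $\xi_v(t)$ denotes the total offspring count of $v$ in the unpercolated process $\BP_a$, and verify through a generator computation (handling kept and discarded births separately and using $h_r\phi_r + c\sum_s \phi_{r,s}h_s = \lambda_c h_r$) that $e^{-\lambda_c t}\tilde N(t)$ is a non-negative martingale starting from $h_a$. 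Since $\tilde N(t) \geq (\min_r h_r)\, \scZ_a^c(t)$, the desired bound reduces to showing $\sup_t e^{-k\lambda_c t}\E_a[\tilde N(t)^k] < \infty$, which I would establish by deriving closed recursions (via the generator) for the $k$-fold moments of the type-weighted sums involving the offspring counts and checking that their growth rate is at most $k\lambda_c$; equivalently, a Doob $L^p$ inequality applied to the martingale combined with quadratic-variation control achieves the same.

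For part (b), my plan is to invoke the Paley--Zygmund inequality. Writing $X_t := e^{-\lambda_c t}\cR_{\emptyset,c}(t)$, from \eqref{eqn:217} one has $\E_a[X_t] = (1-c)\, e^{-\lambda_c t}\E_a[\scZ_a^c(t)]$. A Laplace-transform/renewal argument applied to the integral equations $y_b^{(a)}(t) = \delta_{a,b} + \sum_r \int_0^t y_b^{(r)}(t-s)\, \mu_{a,r}^c(ds)$ (for $y_b^{(a)}(t) := \E_a[Y_b(t)]$, the expected count of type $b$ particles starting from a single type $a$) identifies $\lambda_c$ as a simple pole of $\hat y_b^{(a)}(\lambda)$ with positive residue along the PF eigen-directions, yielding $e^{-\lambda_c t}\E_a[\scZ_a^c(t)] \to C_a > 0$ for every $a \in [K]$. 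Combining this with the $k=2$ bound from part (a), Paley--Zygmund gives
\[\pr_a\!\left(X_t \geq \tfrac{1}{2}\E_a[X_t]\right) \geq \frac{(\E_a[X_t])^2}{4\,\E_a[X_t^2]},\]
whose right-hand side, for $t$ large, is uniformly bounded below across $a \in [K]$, delivering the claim with $\delta_1 := \tfrac{1}{2}(1-c)\min_{a \in [K]} C_a$.

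The principal technical obstacle is the uniform $k$-th moment bound in part (a). While the existence of the martingale and the first-moment identity are clean, pushing to higher moments is delicate: the auxiliary variables $\xi_v(t)$ are unbounded, and their joint evolution with the branching structure must be tracked carefully so that the $k$-fold moment recursions do not inflate the exponential growth rate past $e^{k\lambda_c t}$.
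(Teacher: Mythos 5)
Your proposal follows essentially the same route as the paper: conditional Jensen for the first inequality, the weighted functional $U(t)=\sum_b h_b Y_b(t)$ (which is your $\tilde N(t)$, since $\deg(v,t)=\xi_v(t)+1$ off the root) whose discounted version is a martingale with mean $h_a$, a generator-driven induction for the higher moments, and Paley--Zygmund combined with a Markov renewal/Laplace-transform argument for the first-moment limit in part (b). The obstacle you flag in part (a) is resolved in the paper by the elementary observation that each birth event changes $U(t)$ by at most $h_i+X_i\le 2$ (under the normalization $\sum_i h_i=1$), so expanding $\cA[U^J]$ binomially gives the leading term $J\lambda_c U^J$ plus lower-order terms of size $\lambda_c {J \choose l}2^{l-1}U^{J-l+1}$ that are controlled by the induction hypothesis; by contrast, the Doob $L^p$ alternative you mention would presuppose the very moment bound it is meant to establish, so the generator recursion is the route to carry out.
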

\begin{rem}
	\label{rem:perron-l-1}
	For any $\lambda > \max_a \phi_a$, consider the matrix $\tilde{\vM}^{\sss(c)}(\lambda) = (c\phi_{a,b}/(\lambda - \phi_a))_{a,b\in [K]}$.  Compare this with \eqref{eqn:matrix-def}.  An equivalent description of $\lambda_c$ and $\mvh$ is as follows: $\lambda_c$ is the unique value for which the Perron root $\tilde{\vM}^{\sss(c)}(\lambda_c)$ is one, with corresponding eigen-vector $\mvh$ (note that $\lambda_c> \max_a \phi_a$ by Proposition \ref{prop:lc-bound}). Thus, in view of \cite{jagers1996asymptotic,jagers1989general}, we are considering $\BP_a^{c}(\cdot)$ as a (multi-type) branching process in its own right and quantifying it's evolution. 
\end{rem}
\begin{proof}[Proof of Part (a)]\label{pf:lem-r-zc-a}
	The first inequality follows from conditional Jensen's inequality, using \eqref{eqn:217} since, 
{\cg \[\E((\cR_{\emptyset, c}(t))^k) = (1-c)^k\E\big[\big(\E_a(\scZ_a^c(t)|\BP_a(t))\big)^k\big] \leq (1-c)^k\E((\scZ_a^c(t))^k). \]}
	 To prove the second inequality, first define the processes, 
	\begin{equation}
	\label{eqn:ut-yt-def}
		U(t):=\sum_{b\in [K]} h_b Y_b(t), \qquad Y_b(t):= \sum_{v\in \BP_a^c(t), a(v) = b} \deg(v,t), \qquad t\geq 0. 
	\end{equation}
As before, $\deg(v,t)$ is the degree of $v$ in the full branching process $\BP_a(t)$, {\bf not} just in the percolated process $\BP_a^c(t)$. Let $\cA$ denote the generator of the continuous time Markov process $\BP_a(\cdot)$. Using the fact that each new vertex created by an individual in $\BP_a^c(\cdot)$ is retained with probability $c$, we get:
\begin{align}
	\cA U(t) &= \sum_{i\in [K]} h_i \phi_i Y_i(t) + c\sum_{i,j} h_i \phi_{j,i} Y_j(t) \notag \\
	&= \sum_{i,j \in [K]} h_i \vM_{ji}^{\sss(c)} Y_j(t) = \lambda_c \sum_{j\in [K]} h_j Y_j(t) = \lambda_c U(t) \label{eqn:933},
\end{align}
{\cg where for fixed $i\in [K]$, in the first line, the first term accounts for the birth of every new individual, which adds to the degree of the parent, irrespective of whether it is retained in $\BP^c_a$ or not, while the second corresponds to the contribution of newly born vertices of type $i$ to parents of type $j$, {\bf that are retained} (thus the factor $c$).} In the second line, we used the definition of $\lambda_c$ and $\mvh$. Writing $u_a(t) = \E_a(U(t))$, using \eqref{eqn:933} gives $u_a^\prime(t) =\lambda_c u_a(t)$ with $u_a(0) = h_a$. Thus, 
\begin{equation}
\label{eqn:944}
	u_a(t):= \E_a(U(t)) = h_a e^{\lambda_c t}, \qquad t\geq 0.  
\end{equation} 
% We will proceed by induction.

\noindent {\bf Induction statement:} For any $k \ge 1$ and any $a\in [K]$, $\exists \mu_{a, k}<\infty$ such that, 
\begin{equation}
	\label{eqn:ind-hyp}
	\sup_{t <\infty} \E_a(e^{-\lambda_c k t}[U(t)]^k) \leq \mu_{a, k}. 
\end{equation}
The above holds for $k=1$ with $\mu_{a,1} = h_a$ for all $a\in [K]$. Suppose for some $J \ge 2$ the induction statement holds for all $a \in [K]$ and $1\leq k\leq J-1$. Now consider $k=J$. \textcolor{black}{For $i \in [K]$, let $X_i$ be a random variable which takes value $0$ with probability $1-c$ and $h_j$ with probability $c \phi_{i,j}/\phi_i$ for $j \in [K]$. Denote by $\E_{X_i}[\cdot]$ the expectation taken only with respect to $X_i$ (conditional on $\BP(t)$). Then the generator $\cA$ of the continuous time Markov process $\BP_a^c(\cdot)$, applied to $U(t)^J$, takes the following form:
\begin{align}
	&\cA[(U(t))^J] = \sum_{i} \E_{X_i}[(U(t) +h_i + X_i)^J - U(t)^J] \phi_i Y_i(t) \notag \\
	&\quad= J(U(t))^{J-1}\sum_{i} \E_{X_i}[h_i + X_i] \phi_i Y_i(t) + \sum_{l=2}^{J} {J \choose l} (U(t))^{J-l} \sum_{i}  \E_{X_i}[(h_i + X_i)^l] \phi_i Y_i(t)\notag\\
	&\quad\le J(U(t))^{J-1}\sum_{i} \E_{X_i}[h_i + X_i] \phi_i Y_i(t) + \sum_{l=2}^{J} {J \choose l} 2^{l-1} (U(t))^{J-l} \sum_{i}  \E_{X_i}[h_i + X_i] \phi_i Y_i(t)\notag\\
	&\quad = J(U(t))^{J-1} \sum_{i,j} h_j \vM_{i,j}^{\sss(c)} Y_i(t) + \sum_{l=2}^{J} {J \choose l} 2^{l-1} (U(t))^{J-l}\sum_{i,j} h_j \vM_{i,j}^{\sss(c)} Y_i(t) \notag \\
	&\quad = J \lambda_c (U(t))^J +\lambda_c \sum_{l=2}^J {J \choose l}2^{l-1}(U(t))^{J-l+1},\notag\\ \label{eqn:1009}
\end{align}
where in going from the second to third line, we used $h_i + X_i \le 2$ for all $i \in [K]$, and in going from the fourth to fifth line, the terms simplify as in \eqref{eqn:933}. Thus,  writing $u_{a, J}(t) = \E_a([U(t)]^J)$ we get, 
\begin{align}
	\frac{d}{dt} e^{-\lambda_c J t} u_{a, J}(t) &\leq \lambda_c e^{-\lambda_c t} \sum_{l=2}^J {J \choose l}2^{l-1} \E_a\big[e^{-(J-l+1)\lambda_c t}(U(t))^{J-l+1}\big] \notag \\
	&\leq \lambda_c e^{-\lambda_c t}\sum_{l=2}^{J} {J \choose l}2^{l-1}\mu_{a, J-l+1}, \label{eqn:1016}
\end{align} 
where in the last bound we used the induction hypothesis \eqref{eqn:ind-hyp}. Integrating we get, 
\[\sup_{t <\infty}e^{-\lambda_c J t} u_{a, J}(t) \leq \sum_{l=2}^{J} {J \choose l}2^{l-1}\mu_{a, J-l+1} := \mu_{a, J} <\infty,\]
extending the result for $J$.} To finish the proof note that 
\[(e^{-\lambda_c t} \scZ_a^c(t))^k \leq (\inf_{b\in [K]} h_b)^{-k} (e^{-\lambda_c t} U(t))^k.\]
\end{proof}

\begin{proof}[Proof of Part (b)]
	\label{pf:lem-r-zc-b}
	The proof relies crucially on the following Lemma which implies a uniform positive lower bound on the first moment.
	\begin{lemma}\label{lem:lower-exp-zc}
		For any $a\in [K]$, 
		\[\lim_{t\to\infty} \E_a[e^{-\lambda_c t}\cR_{\emptyset, c}(t) ] ={(1-c)}\lim_{t\to\infty} \E_a[e^{-\lambda_c t}\scZ_a^c(t)] := c_{a,1} >0. \]
	\end{lemma}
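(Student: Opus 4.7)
The first equality is immediate: taking expectations in \eqref{eqn:217} gives $\E_a[\cR_{\emptyset,c}(t)] = (1-c)\E_a[\scZ_a^c(t)]$ for every $t\ge 0$, so existence and positivity of either limit transfers to the other with the stated proportionality factor.

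For the positivity of the second limit, my plan is to apply the generator $\cA$ of the joint Markov process $(\BP_a(\cdot),\BP_a^c(\cdot))$ to $\scZ_a^c$ and to the vector $\vY(t) = (Y_b(t))_{b\in[K]}$, and then reduce to the linear ODE already appearing in the proof of part (a). Since $\scZ_a^c(t)$ increases by one exactly when some offspring in the full process $\BP_a$ is retained into $\BP_a^c$, and a type-$i$ vertex $v\in \BP_a^c(t)$ produces offspring at total rate $\phi_i\deg(v,t)$ with each retained independently with probability $c$, one obtains $\cA\scZ_a^c(t) = c\sum_i \phi_i Y_i(t)$. A parallel accounting for $Y_b$---tracking both the parent's degree increment (which occurs regardless of retention, giving the diagonal term $\phi_b Y_b$) and the contribution of a newly retained type-$b$ child of initial degree one (giving the off-diagonal term $c\sum_i \phi_{i,b} Y_i$)---yields $\cA Y_b = \sum_i \vM^{\sss(c)}_{i,b}Y_i$. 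Hence $\vy(t):=\E_a[\vY(t)]$ solves the closed linear ODE $\tfrac{d}{dt}\vy(t) = (\vM^{\sss(c)})^T\vy(t)$, with initial condition consistent with the identity $\E_a[U(0)] = h_a$ used in part (a).

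By Assumption \ref{ass:finite-case} the matrix $\vM^{\sss(c)}$ has strictly positive entries, hence is irreducible; by Perron--Frobenius its leading eigenvalue $\lambda_c$ is simple with strictly positive right eigenvector $\mvh$ and strictly positive left eigenvector, which I denote by $\mvw$. A standard rank-one matrix-exponential asymptotic then gives
\[e^{-\lambda_c t}\vy(t) \;\longrightarrow\; \alpha_a\,\mvw, \qquad t\to\infty,\]
where $\alpha_a > 0$ is determined by the initial condition (specifically $\alpha_a = h_a/(\mvh^T\mvw)$, strictly positive). Substituting this into $\tfrac{d}{dt}\E_a[\scZ_a^c(t)] = c\sum_i\phi_i y_i(t)$ and integrating---using the uniform $L^1$ bound from part (a) to justify interchange of limits and to control the boundary term---yields
\[\lim_{t\to\infty}e^{-\lambda_c t}\E_a[\scZ_a^c(t)] \;=\; \frac{c\,\alpha_a \sum_i \phi_i w_i}{\lambda_c}\;>\;0,\]
and $c_{a,1}$ is then $(1-c)$ times this positive constant.

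The main obstacle is the careful generator bookkeeping for the vector $\vY$: one must correctly separate the contribution of the parent-degree increment (which always contributes, yielding the diagonal entry $\phi_b$ of $\vM^{\sss(c)}$) from that of the new child's initial degree (which contributes only upon retention, producing the factor $c$ on the off-diagonal entries $c\phi_{i,b}$). Once this identity is in place, the remainder is standard Perron--Frobenius analysis for $\vM^{\sss(c)}$, already implicit in Remark \ref{rem:perron-l-1}, together with a routine argument exchanging integration and the $t\to\infty$ limit that is justified by the second-moment bound from part (a).
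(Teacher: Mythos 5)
Your argument is correct, but it takes a genuinely different route from the paper. The paper decomposes $\scZ_a^c(t)$ over the root's retained children to get a renewal-type integral equation for $\E_a[\scZ_a^c(t)]$, normalizes by $e^{-\lambda_c t}h_b/h_a$ so that the kernel $\mu^{*,c}$ becomes a semi-Markov kernel, and then invokes the Markov renewal theorem of Nummelin to conclude that $e^{-\lambda_c t}\E_a[\scZ_a^c(t)]$ converges to an explicit positive constant; this machinery is then reused verbatim to compute $\E_a(\cR_{\emptyset,c})$ in part (b) of Theorem \ref{thm:page-rank-tails}. You instead exploit the fact that the reproduction point processes here are Markovian pure-birth, so that the expected type-indexed degree vector $\vy(t)=\E_a[\vY(t)]$ closes into the finite linear ODE $\dot{\vy}=(\vM^{\sss(c)})^{T}\vy$; your generator bookkeeping ($\cA Y_b=\phi_b Y_b+c\sum_i\phi_{i,b}Y_i=\sum_i\vM^{\sss(c)}_{i,b}Y_i$ and $\cA\scZ_a^c=c\sum_i\phi_iY_i$) is exactly right and consistent with the computation of $\cA U$ in the paper's proof of Proposition \ref{prop:r-zc-moments}(a). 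Since $\vM^{\sss(c)}$ has strictly positive entries under Assumption \ref{ass:finite-case}, the rank-one asymptotics $e^{-\lambda_c t}e^{t(\vM^{\sss(c)})^T}e_a\to h_a\,\mvw/(\mvh^T\mvw)$ hold with a spectral gap, and integrating the drift of $\scZ_a^c$ then yields a strictly positive limit; the interchange of limit and integral you gesture at is elementary here (it only needs $e^{-\lambda_c s}y_i(s)$ convergent and locally bounded, which the ODE solution provides directly). What your approach buys is a more elementary, self-contained proof that avoids the renewal-theoretic citation; what it gives up is generality (it would not survive non-exponential reproduction processes) and the by-product that the paper actually needs, namely the renewal-measure representation $\fR^*(a,b;\cdot)$ used later to derive the closed-form expression for $\E_a(\cR_{\emptyset,c})$. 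One cosmetic point: the term $c\sum_i\phi_{i,b}Y_i$ you call the ``off-diagonal term'' also contributes $c\phi_{b,b}$ to the diagonal entry $\vM^{\sss(c)}_{b,b}=c\phi_{b,b}+\phi_b$; your displayed identity is nonetheless correct.
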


	\noindent {\bf Proof of (b) assuming {\cg Lemma \ref{lem:lower-exp-zc}}:} Now we essentially use the second moment method (via the Paley-Zygmund inequality). Using part (a) note $c_2 := \sup_{a\in [K]} \mu_{a,2} < \infty$. Using Lemma \ref{lem:lower-exp-zc}, let $c_1 := \inf_{a\in [K]} c_{a,1} >0$. Then for any $a\in [K]$,
	\begin{align}
		\liminf_{t\to\infty} \pr_a\big(e^{-\lambda_c t}\cR_{\emptyset, c}(t) \geq \frac{c_1}{4}\big) &\geq \liminf_{t\to\infty}\pr_a\left(e^{-\lambda_c t}\cR_{\emptyset, c}(t) \geq \frac{\E_a(e^{-\lambda_c t}\cR_{\emptyset, c}(t))}{2}\right)\notag \\
		&\geq \liminf_{t\to\infty} \frac{\E_a(e^{-\lambda_c t}\cR_{\emptyset, c}(t))}{4\E_a([e^{-\lambda_c t}\cR_{\emptyset, c}(t)]^2)} \geq \frac{c_1^2}{4c_2} >0. \notag
	\end{align}   
	\qed 
	
	\noindent {\bf Proof of Lemma \ref{lem:lower-exp-zc}:} By \eqref{eqn:217}, for any $t\geq 0$,  $\E_a[e^{-\lambda_c t}\cR_{\emptyset, c}(t) ] ={(1-c)}\E_a[e^{-\lambda_c t}\scZ_a^c(t)]$.  Thus it is enough to analyze the process $\scZ_a^c$. For $l\geq 1$, write {\cg $\varrho_l$ for the $l$-th offspring of the root $\emptyset$ in $\BP_a^c$, and let  $a(\varrho_l) \in [K]$ and $\beta_l >0 $ for the corresponding attribute type and birth time. Then,
	\begin{equation}
	\label{eqn:dist}
		\scZ_a^{c}(t) \stackrel{d}{=} 1 + \sum_{k=1}^\infty \scZ_{a(\varrho_k)}^c(t-\beta_k), 	
		\end{equation} 
	where, conditional on the types, $\set{\BP_{a(\rho_k)}^c(\cdot):k\geq 1}$ are independent percolated branching processes as in Definition \ref{def:perc-bp} and $\scZ_{a(\rho_k)}^c(\cdot) = |\BP_{a(\rho_k)}^c(\cdot)|$. }
For $a,b \in [K]$, let $\mu^c(a,b;  \cdot)$ denote the intensity measure of the point process of type $b$ children emanating from a type $a$ parent. Then \eqref{eqn:dist} gives
\begin{equation}
\label{eqn:259}
	\E_a(\scZ_a^c(t)) = 1+ \int_0^t \sum_{b\in [K]} \E_b[\scZ_b^c(t-s)]\mu^c(a,b; ds). 
\end{equation}
Define
\begin{equation}
\label{eqn:819}
	m(a,t) = \frac{e^{-\lambda_c t} \E_a(\scZ_a^c(t))}{h_a}, \qquad \mu^{*,c}(a,b; dt) = e^{-\lambda_c t} \mu^c(a,b; dt) \frac{h_b}{h_a}.
\end{equation}  
Then by \eqref{eqn:259},
\begin{equation}
\label{eqn:r*}
	m(a,t) = \frac{e^{-\lambda_c t}}{h_a} + \int_0^t \sum_{b\in [K]} m(b, t-s) \mu^{*,c}(a,b;ds).
\end{equation}
Recall the evolution of children of type $b$ from type $a$ parent in the (un-percolated) process $\BP$ from Section \ref{sec:proofs-local}. Write $N_a(\cdot)$ for the total offspring process (again in $\BP$) of a type $a$ parent. Let $\set{B_{i, a\leadsto b}: i\geq 1}$ be an \emph{i.i.d.} $\Bern(c\phi_{a,b}/\phi_a)$ sequence (also independent of $N_a$). Then we have 
\begin{align}
	\mu^c(a,b,[0,t]):= \E\left(\sum_{l=1}^{N_a(t)} B_{l, a\leadsto b} \right) = \frac{c\phi_{a,b}}{\phi_a} \E_a(N_a(t)) = \frac{c\phi_{a,b}}{\phi_a}(e^{\phi_a t} -1). \label{eqn:430}
\end{align} 
For $\alpha > \max_a \phi_a$, we define and get (using \eqref{eqn:430}) the explicit formula:
\begin{equation}
\label{eqn:428}
	\hat{\rho}^{c}_{a,b}(\alpha):= \int_0^\infty e^{-\alpha t} \mu^c(a,b,dt)= \frac{c\phi_{a,b}}{\alpha -\phi_a} = (\tilde{\vM}^{\sss(c)}(\alpha))_{a,b},
\end{equation}
with $\tilde{\vM}^{\sss(c)}(\cdot)$ as in Remark \ref{rem:perron-l-1}. In particular (as described in the Remark), the vector $\vh$ is the Perron-Frobenius right eigen-vector of $\hat{\rho}^{c}_{a,b}(\lambda_c)$ with Perron root one. In particular for any fixed $a\in [K]$,
\begin{equation}
\label{eqn:mu-*-eigen}
	\sum_{b\in [K]} \mu^{*,c}(a,b; \bR_+) = \sum_{b\in [K]} \frac{\hat{\rho}^{c}_{a,b}(\lambda_c) h_b}{h_a} =1. 
\end{equation}
Thus, following \cite[Equation 1.1c]{nummelin1978uniform}, the family of measures $\set{\mu^{*,c}(a,b,\cdot): a,b\in [K]}$ is a semi-Markov kernel. Let $\bX= \set{(X_n, T_n): n\geq 0}$ denote the corresponding Markov renewal process namely the Markov chain on $[K]\times \bR_+$ with transition probabilities given by 
\begin{equation}
\label{eqn:503}
	\pr((a,t), A\times \Gamma ) = \sum_{b\in A}\mu^{*,c}(a,b; \Gamma -t), \qquad \Gamma \in \cB(\bR_+), A\subseteq [K]. 
\end{equation}
{\cg Intuitively, the above says that although the percolated branching process grows exponentially fast in continuous time, the expected population size rescaled as in \eqref{eqn:819} satisfies a renewal-type equation \eqref{eqn:r*}. This equation is described in terms of the transition kernel of a Markov process $(X_n,T_n)$ that is homogeneous in the sense that the conditional distribution of $(X_{n+1}, T_{n+1}-T_n)$ given the entire past of the process only depends on $X_n$.}

We write $\pr_{a,t}^{\bX}$ for the above transition kernel and $\E_{a,t}^{\bX}$ for the corresponding expectation operator.  The corresponding renewal measure is given by,
\begin{equation}
	\label{eqn:frstar}
	\fR^*(a,b,dt):= \E_{a,0}^{\bX}[\sum_{n=0}^{\infty} \ind\set{X_n=b, T_n\in dt}], 
	\end{equation}
\textcolor{black}{where we have suppressed its dependence on $c$ for notational convenience.
Letting $G(a,t) = e^{-\lambda_c t}/h_a$ and using \eqref{eqn:r*}, we see that 
\begin{equation}
\label{eqn:818}
	m(a,t) = \sum_{b \in [K]}\int_0^tG(b,t-s)\fR^*(a,b,ds)=: \fR^*\otimes G(a,t),
\end{equation}
where $\otimes$ denotes the convolution operation.} 

Now note that for $\bX$:
\begin{enumeratea}
	\item The embedded $[K]$ valued Markov chain $(X_n:n\geq 0)$ is clearly irreducible and recurrent. 
	\item For any $a\in [K]$,
	\begin{align*}
		\pr_{a,0}^{\bX}(T_1 >t) &= \sum_{b\in [K]} \int_{t}^{\infty} e^{-\lambda_c s} \mu^c(a,b,ds) \frac{h_b}{h_a} = \sum_{b\in [K]} \int_{t}^{\infty} e^{-\lambda_c s} c\phi_{ab}e^{\phi_a s} \frac{h_b}{h_a} ds\\
		&= \sum_{b\in [K]} \left[\frac{c \phi_{ab}}{\lambda_c - \phi_a}\cdot \frac{h_b}{h_a}\right] e^{-(\lambda_c - \phi_a)t} = e^{-(\lambda_c -\phi_a)t},
	\end{align*}
	where again in the last line, we have used the eigen-vector characterization of $\mvh$. In particular for any $a\in [K]$, under $\pr_{a,0}^{\bX}$,
	\[T_1 \sim \Exp(\lambda_c - \phi_a), \qquad  \Rightarrow \qquad \E_{a,0}^{\bX}(T_1) < \infty.\]
	Thus the Markov renewal process is positive recurrent in the sense of \cite[Section 2]{nummelin1978uniform}. {\cg This implies that, starting from any state $a$, the expected time to hit $b$ is finite (that is, if $\sigma := \inf\{n \ge 0: X_n=b\}$, then $\E(T_{\sigma})<\infty$).}
	\item The irreducibility condition stated in \cite[Corollary on P129]{nummelin1978uniform} also clearly holds.
\end{enumeratea}
Thus all the conditions for \cite[Theorem 5.1]{nummelin1978uniform} are satisfied. Write $\mvrho = (\rho_1, \rho_2, \ldots, \rho_K)$ for the (strictly positive) left eigen-vector for the matrix $\tilde{\vM}^{(c)}(\lambda_c)$ corresponding to eigen-value one.  By \cite[Corollary to Thm 5.1, part(ii), P135]{nummelin1978uniform}, using the characterization in \eqref{eqn:818},
\begin{equation}
\label{eqn:816}
	m(a,t) = \frac{e^{-\lambda_c t} \E_a(\scZ_a^c(t))}{h_a} \longrightarrow  \frac{\frac{1}{\lambda_c} \sum_{b\in [K]} \frac{\rho_b}{h_b}}{\sum_{b\in [K]} \frac{\rho_b}{\lambda_c -\phi_b}} >0, \qquad \mbox{ as } t\to\infty.
\end{equation}
This completes the proof. 

\end{proof}

\begin{proof}[{\bf Completing the proof of Theorem \ref{thm:page-rank-tails}(a):}]\label{pf:thm-page-rank-tails}
We start with the upper bound. Fix any integer $\ell > 2/\lambda_c$. By Proposition \ref{prop:r-zc-moments}(a) and Markov's inequality, {\cg writing $H_{\ell} :=  \sup_{t\in \bR_+} \E_a\bigg[\big(e^{-\lambda_c t } \cR_{\emptyset, c}(t)\big)^\ell\bigg] <\infty$, we obtain
\[\pr_a(\cR_{\emptyset, c}(t) \ge r) \leq \frac{\E_a\bigg[\big(e^{-\lambda_c t } \cR_{\emptyset, c}(t)\big)^\ell\bigg]}{(re^{-\lambda_c t})^{\ell}} \leq \frac{H_{\ell}}{(re^{-\lambda_c t})^{\ell}}, \qquad \forall~a\in [K]~ \forall r>0. \]}
Using the distributional characterization of the limit PageRank from \eqref{lwlpr} we get, 
\begin{align*}
	\pr_a(\cR_{\emptyset, c} \geq r) &= \int_0^\infty \pr_a(\cR_{\emptyset, c}(t) \ge r) 2 e^{-2t} dt \leq \int_0^{\frac{\log{r}}{\lambda_c}} \frac{H_{\ell}}{(re^{-\lambda_c t})^{\ell}} 2e^{-2t} dt + \int_{\frac{\log{r}}{\lambda_c}}^{\infty} 2e^{-2t} dt,\\
	&\leq\frac{2H_{\ell}}{r^\ell}\int_0^{\frac{\log{r}}{\lambda_c}} e^{(\ell \lambda_c -2)t} dt + \frac{1}{r^{2/\lambda_c}} \le \left(\frac{2H_{\ell}}{\ell \lambda_c -2} +1\right) \frac{1}{r^{2/\lambda_c}}. 
\end{align*}
To conclude, we prove the lower bound. By Proposition \ref{prop:r-zc-moments}(b), $\exists~ t_0, \delta, \eta >0$ such that for all $a\in [K]$, $t\geq t_0$, $\pr_a(e^{-\lambda_c t} \cR_{\emptyset, c}(t) \geq \delta) \geq \eta$. Hence for all $r\geq \delta e^{\lambda_c t_0}$ and any $a \in [K]$,
\[\pr_a(\cR_{\emptyset, c} \geq r) \geq \int_{\frac{\log{(r/\delta)}}{\lambda_c}}^\infty \pr_a(\cR_{\emptyset, c}(t) > r) 2 e^{-2t} dt \geq \eta \operatorname{exp}\left(-\frac{2\log{(r/\delta)}}{\lambda_c}\right) = \eta\left(\frac{r}{\delta}\right)^{-2/\lambda_c}, \]
completing the proof. 
\end{proof}

\subsubsection{\bf Proof of Theorem \ref{thm:page-rank-tails}(b)} {\cg Recall the renewal measure $\fR^*$ in \eqref{eqn:frstar}.} Using \eqref{lwlpr}, the definition of the mean process $m(\cdot, \cdot)$ from \eqref{eqn:819} and its characterization using the renewal measure in \eqref{eqn:818}, and defining $\theta:=2-\lambda_c >0$, we get, 
\begin{align}
	(1-c)^{-1}&\E_a(\cR_{\emptyset, c})=\int_0^\infty 2e^{-2t} \E_a(\scZ_a^c(t)) dt = h_a \int_0^\infty  2e^{-\theta t} m(a,t) dt \notag\\
	&= h_a\int_0^\infty 2e^{-\theta t}\int_0^t \sum_{b\in [K]} \frac{1}{h_b} e^{-\lambda_c(t-s)} \fR^*(a,b; ds) dt\notag\\
	&= h_a\int_0^\infty  \sum_{b\in [K]} \frac{1}{h_b} \int_s^\infty 2e^{-2t}  dt~ e^{\lambda_c s}\fR^*(a,b; ds)\notag\\
	&= h_a\int_0^\infty \sum_{b\in [K]}\frac{1}{h_b} e^{-\theta s} \fR^*(a,b; ds)
	= h_a\int_0^\infty\sum_{b\in [K]}\frac{1}{h_b} \fR^*(a,b; [0,s]) \theta e^{-\theta s} ds \notag\\
	&= h_a \E\left[\sum_{b\in [K]} \frac{1}{h_b} \fR^*(a,b;[0,S^\theta])\right] = h_a \E\left[\sum_{b\in [K]}\frac{1}{h_b} \sum_{n\geq 0}\pr_{a,0}^{\bX}(X_n=b, T_n\leq S^\theta)\right]\label{eqn:947}
\end{align}
where $S^\theta\sim \Exp(\theta)$, independent of the Markov renewal process $\bX$, and the last two expectations above are taken with respect to the distribution of $S^\theta$. For $n\geq 1$, recall the notation $\scI_n^{\sss(a)}$ in the statement of Theorem \ref{thm:page-rank-tails}(b) and analogously for $a,b\in [K]$ define,
\[\scI_n^{\sss(a,b)}:= \set{\vj:=(j_0,j_1,\ldots, j_n)\in [K]^n: j_0 = a, j_n =b}.\] 
Recalling the definition of $\hat{\rho}$ from \eqref{eqn:428},  for $l,k\in [K]$ define, 
\[p_{lk}:= \frac{c\phi_{lk}}{\lambda_c - \phi_l}\cdot \frac{h_k}{h_l} = \hat{\rho}^{c}_{l,k}(\lambda_c)\cdot  \frac{h_k}{h_l}.\]
Note that \eqref{eqn:mu-*-eigen} implies the following. 
\begin{lemma}\label{lem:pmf}
	For each fixed $l\in [K]$, $\vp_l:= (p_{lk}:k\in [K])$ is a probability mass function with strictly positive entries. Further the matrix $\vP = (p_{lk})_{l,k\in [K]}$ is the transition matrix of an irreducible Markov chain on $[K]$. 
\end{lemma}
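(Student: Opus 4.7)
The plan is to verify the three claims—row sums equal to one, strict positivity of entries, and irreducibility—by directly invoking results already established in the excerpt, so the proof will be short.

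First, I would observe that the row-sum identity is essentially a restatement of \eqref{eqn:mu-*-eigen}. Indeed, for fixed $l \in [K]$,
\[
\sum_{k \in [K]} p_{lk} \;=\; \sum_{k \in [K]} \hat{\rho}^{c}_{l,k}(\lambda_c)\cdot \frac{h_k}{h_l} \;=\; \sum_{k \in [K]} \mu^{*,c}(l,k; \mathbb{R}_+) \;=\; 1,
\]
where the first equality is the definition of $p_{lk}$, the second follows from the definition of $\mu^{*,c}$ in \eqref{eqn:819} together with the expression for $\hat{\rho}^{c}_{l,k}(\lambda_c)$ derived in \eqref{eqn:428}, and the last is \eqref{eqn:mu-*-eigen}. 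So each row of $\vP$ is a probability mass function.

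Next, I would establish strict positivity of all entries $p_{lk}$. Assumption \ref{ass:finite-case} guarantees that $\kappa_{a,b} > 0$ and $\pi_a > 0$ for all $a,b \in [K]$, so by the construction in \eqref{eqn:phi-def} we have $\phi_{l,k} > 0$ for every $l,k\in[K]$. The components $h_k$ of the Perron-Frobenius right eigenvector of $\vM^{\sss(c)}$ (with all positive entries) are strictly positive. The damping factor satisfies $c \in (0,1)$. Finally, Proposition \ref{prop:lc-bound} yields $\lambda_c > \max_{a \in [K]} \phi_a$, so $\lambda_c - \phi_l > 0$ for every $l \in [K]$. Combining these four observations gives $p_{lk} > 0$ for all $l,k$.

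Irreducibility of $\vP$ is then immediate: since every entry is strictly positive, one can transition from any state to any other in a single step, so the Markov chain is trivially irreducible (in fact, it is a regular/ergodic chain). There is no real obstacle in this lemma; its role in the paper is merely to package properties already implicit in \eqref{eqn:mu-*-eigen} and Proposition \ref{prop:lc-bound} into the form needed for the Markov-chain representation that follows in \eqref{eqn:947}. \qed
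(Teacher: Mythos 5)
Your proof is correct and follows essentially the same route as the paper, which simply notes that the lemma is a consequence of \eqref{eqn:mu-*-eigen}; you have merely spelled out the details (row sums from \eqref{eqn:mu-*-eigen}, positivity from Assumption \ref{ass:finite-case} together with $\lambda_c > \max_a \phi_a$ from Proposition \ref{prop:lc-bound}, and irreducibility from positivity) that the paper leaves implicit.
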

Next consider the definition of $\mu^{*,c}(l,k;\cdot)$ in \eqref{eqn:819} in terms of $\mu^c(l,k;\cdot)$ and the corresponding explicit expression for $\mu^c(l,k;\cdot)$ in \eqref{eqn:430}. Observe that, 
\begin{equation}
\label{eqn:937}
	\mu^{*,c}(l,k;dt) = e^{-\lambda_c t} e^{\phi_l t} c\phi_{lk} dt \frac{h_k}{h_l} = \left[(\lambda_c -\phi_l)e^{-(\lambda_c -\phi_l)t}dt\right] p_{lk}. 
\end{equation}
Using this in the definition of the Markov kernel \eqref{eqn:503} for the process $\bX$ gives the following description of $\bX$.
\begin{prop}
	\label{prop:vx-prop}
	The transition kernel of the Markov process $\bX$ has the following equivalent definition: conditional on $X_0 = l$ and $T_0=0$, 
	\begin{enumeratea}
		\item Sample $X_1$ using pmf $\vp_l$. 
		\item Sample $T_1\sim \Exp(\lambda_c - \phi_l)$. 
	\end{enumeratea} 
	In particular, $(X_1, T_1)$ are independent conditional on $X_0$. 
\end{prop}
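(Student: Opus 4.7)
The plan is to read off the assertions directly from the explicit factorization of the semi-Markov kernel obtained in equation \eqref{eqn:937}, together with the definition of the transition kernel of $\bX$ in \eqref{eqn:503}. No new estimates are needed; the proof is a short verification.

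First, I would recall from \eqref{eqn:503} that, conditional on $X_0 = l$ and $T_0 = 0$, the joint law of $(X_1, T_1)$ satisfies
\[
\pr^{\bX}_{l,0}(X_1 = k,\, T_1 \in dt) \;=\; \mu^{*,c}(l,k;\,dt), \qquad k \in [K],\ t \ge 0.
\]
Substituting the explicit form \eqref{eqn:937}, this joint law factorizes as
\[
\pr^{\bX}_{l,0}(X_1 = k,\, T_1 \in dt) \;=\; p_{lk} \cdot (\lambda_c - \phi_l) e^{-(\lambda_c - \phi_l) t}\, dt.
\]
Note that $\lambda_c - \phi_l > 0$ by Proposition \ref{prop:lc-bound}, so the second factor is a genuine probability density on $\bR_+$ corresponding to $\Exp(\lambda_c - \phi_l)$, and by Lemma \ref{lem:pmf} the collection $\vp_l = (p_{lk})_{k \in [K]}$ is a genuine probability mass function.

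The marginal assertions (a) and (b) then follow immediately: summing over $k \in [K]$ and using $\sum_k p_{lk} = 1$ gives
\[
\pr^{\bX}_{l,0}(T_1 \in dt) \;=\; (\lambda_c - \phi_l) e^{-(\lambda_c - \phi_l) t}\, dt,
\]
establishing (b), while integrating in $t$ and using $\int_0^\infty (\lambda_c - \phi_l) e^{-(\lambda_c - \phi_l) t}\, dt = 1$ yields
\[
\pr^{\bX}_{l,0}(X_1 = k) \;=\; p_{lk},
\]
establishing (a). Finally, since the joint density factors as a product of a function of $k$ alone and a function of $t$ alone, $X_1$ and $T_1$ are conditionally independent given $X_0 = l$, giving the independence claim. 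The entire argument is a direct consequence of the factorization \eqref{eqn:937}, so there is no real obstacle; the genuine work was already carried out in deriving that identity from \eqref{eqn:819} and \eqref{eqn:430}.
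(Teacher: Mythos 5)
Your proof is correct and is essentially the paper's own argument: the paper derives the factorization \eqref{eqn:937} and then simply asserts that substituting it into the kernel \eqref{eqn:503} yields the Proposition. Your write-up just makes explicit the marginalization and product-form steps that the paper leaves implicit, together with the (already established) facts that $\lambda_c-\phi_l>0$ and that $\vp_l$ is a pmf.
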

Using this characterization in \eqref{eqn:947} gives 
\[\E(\fR^*(a,b;[0,S^\theta])) = \delta_{a,b} +\sum_{n\geq 1}\sum_{\vj \in \cI_n^{\sss(a,b)}} p_{j_0j_1} \cdots p_{j_{n-1}j_n} \pr(\tau_{j_0}+\tau_{j_1}+\cdots \tau_{j_{n-1}}\leq S^{\theta}),\]
where in the summand above, for each $\vj$, $\tau_{j_0}, \ldots, \tau_{j_{n-1}}$ are independent with $\tau_{j_{l}}\sim \Exp(\lambda_c - \phi_{j_l})$. Using independence of $S^\theta$ from the $\tau_{\cdot}$ sequence,  note that, for any term in the above summand, 
\[\pr(\tau_{j_0}+\tau_{j_1}+\cdots \tau_{j_{n-1}}\leq S^{\theta}) = \E(-\operatorname{exp}(\theta(\tau_{j_0}+\tau_{j_1}+\cdots \tau_{j_{n-1}}))).\]
Now using independence of the $\tau_{\cdot}$ sequence, the explicit form of $(p_{lk})$ and the standard formula for the Laplace transform of the exponential distribution,
\begin{align*}
\E(\fR^*(a,b;[0,S^\theta])) &= \delta_{a,b} +\sum_{n\geq 1}\sum_{\vj \in \cI_n^{\sss(a,b)}} \frac{h_{b}}{h_{a}}\prod_{l=0}^{n-1}\frac{c\phi_{j_lj_{l+1}}}{\lambda_c - \phi_{j_l}}\prod_{l=0}^{n-1}\frac{\lambda_c - \phi_{j_l}}{2 - \phi_{j_l}}\\
&=\delta_{a,b} +\sum_{n\geq 1}c^n\sum_{\vj \in \cI_n^{\sss(a,b)}} \frac{h_{b}}{h_{a}}\prod_{l=0}^{n-1}\frac{\phi_{j_lj_{l+1}}}{2 - \phi_{j_l}}.
\end{align*} 
Therefore,
\begin{equation*}
\E_a(\cR_{\emptyset, c}) = (1-c)h_a \E\left[\sum_{b\in [K]} \frac{1}{h_b} \fR^*(a,b;[0,S^\theta])\right] = (1-c)\left(1 + \sum_{n\geq 1}c^n\sum_{\vj \in \cI_n^{\sss(a)}} \prod_{l=0}^{n-1}\frac{\phi_{j_lj_{l+1}}}{2 - \phi_{j_l}}\right)
\end{equation*}
completing the proof of Theorem \ref{thm:page-rank-tails}(b).

\subsubsection{\bf Proof of Theorem \ref{thm:page-rank-tails}(c)} To prove the first assertion that $\sum_{a\in [K]} \pi_a \E_a(\cR_{\emptyset, c}) =1$, note that by (b) just proven, 
\begin{equation}
\label{eqn:127}
	\sum_{a\in [K]} \pi_a \E_a(\cR_{\emptyset, c}) = (1-c) + (1-c)\left[\sum_{n=1}^\infty c^n  \sum_{a\in [K]}\pi_a \sum_{\vj\in \scI_n^{\sss(a)}} \prod_{l=0}^{n-1} \left(\frac{\phi_{j_j, j_{l+1}}}{2-\phi_{j_l}}\right) \right].
\end{equation} 
Using Proposition \ref{prop:left-eigen} it is easy to check that for each $n$, the summand above,
\[\sum_{a\in [K]}\pi_a \sum_{\vj\in \scI_n^{\sss(a)}} \prod_{l=0}^{n-1} \left(\frac{\phi_{j_j, j_{l+1}}}{2-\phi_{j_l}}\right)  = \sum_{b\in [K]} \pi_b = 1.\]
Using this in \eqref{eqn:127} completes the proof of the first assertion. 

To prove the second claim, consider for any fixed $n$, the sum of the normalized PageRanks,  $\Upsilon_n := \sum_{v\in \cG_n} R_{v,c}(n)$. Note that a vertex $v$ which is a descendant of another vertex $u$ at graph distance $l$ from $u$ contributes $(1-c)c^l$ to the PageRank of $u$. Thus the contribution of $v$ to $\Upsilon_n$ is bounded by $(1-c)\sum_{l=0}^\infty c^l =1$. In particular, 
\begin{equation}
\label{eqn:142}
	\frac{\Upsilon_n}{n} \leq \frac{|\cG_n|}{n} \to 1, \qquad \mbox{as } n\to\infty. 
\end{equation}
Next, by Theorem \ref{thm:page-rank}(b), writing $F_n(\cdot)$ for the empirical cdf of the (normalized) PageRank scores in $\cG_n$ and $F_\infty$ for the limit cdf of $\cR_{\emptyset, c}$ where the root type is selected according to distribution $\mvpi$, $F_n \convd F$ almost surely. Hence, by Fatou's Lemma, the following hold almost surely:
\begin{equation}
\label{eqn:149}
	1 = \E_{\mvpi}(\cR_{\emptyset, c}) = \int_0^\infty (1-F_\infty(x)) dx \leq \liminf{n \rightarrow \infty} \int_0^\infty (1-F_n(x)) dx = \liminf_{n\to\infty} \frac{\Upsilon_n}{n}.
\end{equation} 
Combining \eqref{eqn:142} and \eqref{eqn:149} shows that $\Upsilon_n/n\convas 1$ proving the second assertion in (c). 

\textcolor{black}{To prove the final assertion, fix $a \in [K]$ and let $V_n$ denote a uniformly chosen vertex in $\cG_n$. Define the random variable $Y_n^a := R_{V_n,c}(n)\ind\set{a(V_n) =a}$. By Theorem \ref{thm:page-rank}(b), almost surely, the conditional law of $Y_n^a$ given $\cG_n$ converges in distribution to the law of $Y^a := \cR_{\emptyset, c}\ind\set{a(\emptyset) =a}$ as $n \rightarrow \infty$. Hence, by Fatou's Lemma, almost surely,
\begin{equation}\label{fa1}
\liminf_{n \rightarrow \infty} \E\left[Y_n^a \vert \cG_n\right] \ge \E[Y^a] = \pi_a \mathbb{E}_a[\cR_{\emptyset, c}].
\end{equation}
Moreover, from the previous two assertions in part (c) of the Theorem, almost surely,
\begin{align}\label{fa2}
\sum_{a \in [K]}\liminf_{n \rightarrow \infty}\E\left[Y_n^a \vert \cG_n\right] &\le \liminf_{n \rightarrow \infty} \sum_{a \in [K]}\E\left[Y_n^a \vert \cG_n\right] = \liminf_{n \rightarrow \infty}\E\left[R_{V_n,c}(n) \vert \cG_n\right]\notag\\
&= \liminf_{n \rightarrow \infty}\frac{\Upsilon_n}{n} = 1 = \sum_{a \in [K]} \pi_a \mathbb{E}_a[\cR_{\emptyset, c}].
\end{align}
The last assertion follows from \eqref{fa1} and \eqref{fa2}.}

% \subsection{\bf Proof of Propositions \ref{prop:lc-bound} and \ref{prop:mc-descp}}
% By \cite[Theorem 2.7]{noutsos2006perron}, the Perron-Frobenius (PF) eigen-value is strictly increasing as a function of the matrix entires. Since for $c\in (0,1)$ $\vM^{\sss(c)} < \vM^{\sss(1)}$ (entry-wise domination), and further the PF eigenvalue of $\vM^{\sss(1)}$ is $2$ (see the proof of Prop. \ref{prop:left-eigen}), we have $\lambda_c <2$.
%
% By the characterization of $\mvh$ as the (strictly positive) right eigen-vector of the matrix $\vM^{\sss(c)}$, we have for each fixed $a$,
% \[\sum_{b\in [K]} c \phi_{ab} h_b + \phi_a h_a = \lambda_c h_a.\]
% Thus $\lambda_c > \max_{a\in [K]} \phi_a$. Further, since the Perron-Frobenius eigen-value is bounded between the minimum and maximum row sums, this yields \eqref{eqn:lambc-bounds}. This completes the proof of Proposition \ref{prop:lc-bound}. {\cg Finally,} Proposition \ref{prop:mc-descp} is just a reformulation of Theorem \ref{thm:page-rank-tails}(b). \qed
%

\section{Proofs: Maximal degree asymptotics}
\label{sec:proof-max}
% We start by proving asymptotics for the maximal degree and then prove the various results dealing with network sampling.
{\cg
% \subsection{Proof of Theorem \ref{thm:max-degree}}
{\cg This Section is devoted to the proof of Theorem \ref{thm:max-degree}. }
The proof follows similar lines to the maximal degree analysis of a single type preferential attachment in \cite{mori2005maximum}. The main idea of the proof is to construct appropriate Martingales to track the evolution of degrees of vertices; in order to understand maximal degree behavior we will need martinagles that track higher moments of the degree evolution. To ease notation, assume that the network starts with a single vertex of type $a$ {with initial degree set to one}. In this Section, the root will be denoted by vertex $1$. Else, one can repeat the same argument below, starting from the first (thus random) time of birth of a vertex of type $a$. Thus to prove (a) of the Theorem, here ${\cg \fF_n^a}$ denotes the degree of the root in $\cG_n$. Recall the functionals $(\tilde{Y}_b^{\sss(n)}:b\in [K])$ of normalized degree sums of different attributes in \eqref{eqn:540}. In the following, we simply write $j$ for the $j$-th arriving vertex.
\begin{lemma}
\label{lem:mart-degree-evolution}
For each $a\in [K]$, integer time indices $\ell\geq 1$ and $1\leq j < n < \infty$ define, 
\[\fN_\ell^{\sss(a)} = \sum_{b\in [K]} \frac{\kappa(a,b)\pi_b}{\sum_{a^\prime \in [K]} \kappa(a^\prime ,b) \tilde{Y}_{a^\prime}^{\sss(\ell)}}, \qquad C_{j,n}^{\sss(a)} = \prod_{\ell=j}^{n-1}\left(1+\frac{\fN_\ell^{\sss(a)}}{2(\ell+1)}\right). \]
Suppose vertex $j$ is of type $a$. Then $\set{\deg(j,\cG_n)/C_{j,n}^{\sss(a)}:n\geq j}$ is an $\bL^2$ bounded positive martingale.  
\end{lemma}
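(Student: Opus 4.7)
The plan is to proceed via a direct computation of the one-step conditional dynamics of $\deg(m, \cG_n)$, from which both the martingale property and the $L^2$ boundedness follow after an appropriate normalization. Throughout, assume (as the setup prescribes) that vertex $m$ has attribute $a$ and write $D_n := \deg(m, \cG_n)$, $M_n := D_n/C_{m,n}^{\sss(a)}$, so that $D_m = M_m = 1$.

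First I would compute the one-step attachment probability. Using the attachment rule \eqref{eqn:912old} with $\gamma=1$, conditioning on the incoming attribute $a(v_{n+1})=a^\star \sim \mvpi$, and rewriting the normalizing denominator $\sum_{v'\in \cG_n} \kappa(a(v'),a^\star) \deg(v',n)$ as $2(n+1) \sum_{a'\in[K]} \kappa(a',a^\star) \tilde Y_{a'}^{\sss(n)}$ via \eqref{eqn:540} (recalling $n_0 = 1$), one obtains
\[
\pr\bigl(v_{n+1}\leadsto m \bigm|\cG_n\bigr) \;=\; \frac{D_n}{2(n+1)}\sum_{a^\star\in[K]}\frac{\pi_{a^\star}\kappa(a,a^\star)}{\sum_{a'}\kappa(a',a^\star)\tilde Y_{a'}^{\sss(n)}} \;=\; \frac{D_n\,\Upsilon_n^{\sss(a)}}{2(n+1)}.
\]
Since $D_{n+1}-D_n\in\{0,1\}$, this immediately yields $\E[D_{n+1}\mid\cG_n] = D_n\bigl(1+\Upsilon_n^{\sss(a)}/(2(n+1))\bigr)$, and dividing by the telescoping factor $C_{m,n+1}^{\sss(a)} = C_{m,n}^{\sss(a)}\bigl(1+\Upsilon_n^{\sss(a)}/(2(n+1))\bigr)$ gives $\E[M_{n+1}\mid \cG_n]=M_n$. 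Since $D_n\le n+1$ and $C_{m,n}^{\sss(a)}\ge 1$, each $M_n$ is integrable, and the martingale property is established.

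For $L^2$-boundedness I would compute, from $\E[D_{n+1}^2\mid\cG_n] = D_n^2 + (2D_n+1)\cdot D_n\Upsilon_n^{\sss(a)}/(2(n+1))$, the identity
\[
\E[M_{n+1}^2\mid\cG_n] \;=\; M_n^2\cdot\frac{1+\Upsilon_n^{\sss(a)}/(n+1)}{\bigl(1+\Upsilon_n^{\sss(a)}/(2(n+1))\bigr)^2} \;+\; \frac{M_n\,\Upsilon_n^{\sss(a)}}{2(n+1)\,C_{m,n}^{\sss(a)}\bigl(1+\Upsilon_n^{\sss(a)}/(2(n+1))\bigr)^2}.
\]
The elementary inequality $1+2x\le (1+x)^2$ for $x\ge 0$ shows that the first coefficient is bounded above by $1$, so the $M_n^2$ term does not amplify variance. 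Taking expectations and using $\E[M_n]=1$, it then suffices to prove that the remainder is summable in $n$.

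The main obstacle, and the step where Assumption \ref{ass:finite-case} is essential, is establishing polynomial growth of $C_{m,n}^{\sss(a)}$. The plan is to observe that $\sum_{a'}\tilde Y_{a'}^{\sss(t)}\le 1$ (total normalized degree) and the positivity of $\kappa$ yield the uniform lower bound $\Upsilon_t^{\sss(a)}\ge c_\star := (\min_{a,b}\kappa_{a,b})/(\max_{a,b}\kappa_{a,b}) > 0$ for every $t\ge m$. Consequently
\[
C_{m,n}^{\sss(a)} \;\ge\; \prod_{t=m}^{n-1}\Bigl(1+\frac{c_\star}{2(t+1)}\Bigr) \;\ge\; c(m)\,n^{c_\star/2},
\]
so the remainder term above is bounded above (using $\Upsilon_n^{\sss(a)}\le \bar\Upsilon<\infty$ by the same positivity/boundedness of $\kappa$) by $C\,m^{c_\star/2}\,n^{-1-c_\star/2}\,M_n$. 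Taking expectations and using $\E[M_n]=1$ gives $\E[M_{n+1}^2] - \E[M_n^2] \le C'\,n^{-1-c_\star/2}$, whose sum over $n\ge m$ is finite, completing the $L^2$ bound. The rest of Theorem \ref{thm:max-degree}(a) will then follow from the martingale convergence theorem applied to $M_n$, with the claimed exponent $\phi_a/2$ extracted from the almost sure convergence $\Upsilon_t^{\sss(a)}\to \phi_a$ (guaranteed by \eqref{eqn:8522}) and a Cesàro-type analysis of $\log C_{m,n}^{\sss(a)}$.
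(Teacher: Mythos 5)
Your proof is correct, and the martingale step is exactly the paper's computation \eqref{eqn:912}. Where you genuinely diverge is the $\bL^2$ bound. The paper avoids any quantitative control of $C_{m,n}^{\sss(a)}$: it observes that $\binom{\deg(m,\cG_{n})+1}{2}\prod_{t=m}^{n-1}\bigl(1+\tfrac{\Upsilon_t^{\sss(a)}}{t+1}\bigr)^{-1}$ is itself a martingale (since $\binom{D_{n+1}+1}{2}=\binom{D_n+1}{2}(1+2\Delta/D_n)$), and then bounds $M_n^2\le 2\binom{D_n+1}{2}/(C_{m,n}^{\sss(a)})^2$ together with the pathwise inequality $\prod_t(1+\tfrac{\Upsilon_t}{t+1})\le\prod_t(1+\tfrac{\Upsilon_t}{2(t+1)})^2$, so that $\E[M_n^2]\le 2$ uniformly with no further input. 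You instead run a direct variance-increment computation and control the remainder term by the deterministic polynomial lower bound $C_{m,n}^{\sss(a)}\ge c(m)\,n^{c_\star/2}$ with $c_\star=\min\kappa/\max\kappa>0$, which does require Assumption \ref{ass:finite-case} (strict positivity of $\kappa$) and the tree identity $\sum_{a'}\tilde Y_{a'}^{\sss(t)}\le 1$; both of these hold, and the chain $\E[M_{n+1}^2]-\E[M_n^2]=O(n^{-1-c_\star/2})$ is summable, so the argument closes. The paper's route is shorter and more robust (it would survive without a uniform lower bound on $\Upsilon_t^{\sss(a)}$), while yours has the minor advantage of producing an explicit increment bound on $\E[M_n^2]$ and making visible exactly where the positivity of $\kappa$ enters; the crude lower bound $n^{c_\star/2}$ on $C_{m,n}^{\sss(a)}$ is of course far from the true $n^{\phi_a/2}$ growth, but that is irrelevant for summability.
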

\begin{proof}
The martingale assertion follows directly from the dynamics of the process $\set{\cG_j:j\geq 0}$. Writing $\Delta[\ell+1, j] = \deg(j,\cG_{\ell+1}) - \deg(j,\cG_{\ell})$, it is easy to check that 
\begin{equation}
\label{eqn:912}
\E(\Delta[\ell+1, j]|\cG_n) = \frac{\fN_\ell^{\sss(a)}}{2(\ell+1)} \deg(j,\cG_{n}), \end{equation}
resulting in the stated assertion regarding the martingale property. 
To prove $\bL^2$ boundedness note that, 
 \[{\deg(j,\cG_{n+1}) +1 \choose 2} = {\deg(j,\cG_{n}) +1 \choose 2}\left(1+\frac{2\Delta[n+1, j]}{\deg(j,\cG_{n})}\right).\]
 Thus using \eqref{eqn:912} again shows that 
 \[\set{{\deg(j,\cG_{n}) +1 \choose 2}\cdot \prod_{\ell=j}^{n-1}\frac{1}{\left(1+\frac{\fN_\ell^{\sss(a)}}{(\ell+1)}\right)}: n\geq j},\]
 is a martingale. The $\bL^2$ boundedness now follows upon noting that $\prod_{\ell=j}^{n-1}\left(1+\frac{\fN_\ell^{\sss(a)}}{(\ell+1)}\right)/\prod_{\ell=j}^{n-1}\left(1+\frac{\fN_\ell^{\sss(a)}}{2(\ell+1)}\right)^2 \le 1$ for all $1\leq j < n < \infty$.
\end{proof}

\noindent{\bf Completing the proof of Theorem \ref{thm:max-degree}: } We start with (a). Recall that we use $1$ for the root of the tree.  By Lemma \ref{lem:mart-degree-evolution} and Doob's submartingale convergence theorem for $\bL^2$ bounded martingales, with $C_{n}^{\sss(a)} = C_{1,n}^{\sss(a)}$ we obtain a non-negative non-degenerate random variable $W_a$ such that, 
\[\frac{\deg(1,\cG_n)}{C_{n}^{\sss(a)}} \convas W_a,\]
proving the first assertion in (a). The second assertion on the scaling of $C_n$ follows from noting that, 
\[\sum_{\ell=1}^{n-1} \frac{\fN_\ell^{\sss(a)}}{2(\ell+1)} - \sum_{\ell=1}^{n-1} \frac{(\fN_\ell^{\sss(a)})^2}{8(\ell+1)^2} \le \log{C_n}  \le  \sum_{\ell=1}^{n-1} \frac{\fN_\ell^{\sss(a)}}{2(\ell+1)},\]   
and using the fact that by the work of \cite{jordan2013geometric} and quoted as \eqref{eqn:8522},
 \begin{equation}
 \label{eqn:upst-to-phi}
 \fN_\ell^{\sss(a)} \convas \sum_{b\in [K]} \pi_b \frac{\kappa(a,b)}{\sum_{a^\prime} \kappa(a^\prime, b)\eta_{a^\prime}} = \sum_{b\in [K]} \kappa(a,b) \nu_b = \phi_a, \qquad \mbox{ as } \ell\to\infty. 
 \end{equation}
We now prove (b). Since we are proving asymptotics for the maximum degree, we essentially need control over higher moments of the evolution of degrees thus necessitating a strengthening of Lemma \ref{lem:mart-degree-evolution}. The proof follows identical lines with suitable modifications. 
\begin{lemma}
\label{lem:mart-k}
For any $k\geq 1$, the process
\[\set{{\deg(j,\cG_{n}) +k-1 \choose k}\cdot \prod_{\ell=j}^{n-1}\frac{1}{\left(1+\frac{k\fN_\ell^{\sss(a)}}{2(\ell+1)}\right)}: n\geq j},\]
is a martingale. 
\end{lemma}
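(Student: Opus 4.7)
The plan is to mimic the one-step conditional expectation calculation that established Lemma \ref{lem:mart-degree-evolution}, replacing the quadratic functional $\binom{\deg(m,\cG_n)+1}{2}$ used there by the $k$-th ascending binomial $\binom{\deg(m,\cG_n)+k-1}{k}$. Writing $D_n := \deg(m,\cG_n)$ and $\Delta_{n+1} := D_{n+1} - D_n \in \{0,1\}$, two facts carry over verbatim from the proof of Lemma \ref{lem:mart-degree-evolution}: (i) by \eqref{eqn:912}, $\E[\Delta_{n+1}\mid\cG_n] = \tfrac{\Upsilon_n^{\sss(a)}}{2(n+1)}\, D_n$; and (ii) $\Upsilon_n^{\sss(a)}$ is $\cG_n$-measurable (being a function of the $\tilde Y_b^{\sss(n)}$), so the random product $\prod_{t=m}^{n-1}\bigl(1+k\Upsilon_t^{\sss(a)}/(2(t+1))\bigr)$ can be pulled outside any conditional expectation with respect to $\cG_n$.

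The single new ingredient is the combinatorial identity
\[
\binom{D_n+\Delta_{n+1}+k-1}{k} \;=\; \binom{D_n+k-1}{k} + \Delta_{n+1}\binom{D_n+k-1}{k-1},
\]
which follows from Pascal's rule together with $\Delta_{n+1}\in\{0,1\}$, coupled with the elementary ratio identity $\binom{D_n+k-1}{k-1} = \tfrac{k}{D_n}\binom{D_n+k-1}{k}$ (valid because $D_n\ge 1$ for all $n\ge m$, as each vertex enters with at least one edge). Taking $\E[\,\cdot\mid\cG_n]$ and substituting the expression for $\E[\Delta_{n+1}\mid\cG_n]$ yields the one-step recursion
\[
\E\!\left[\binom{D_{n+1}+k-1}{k}\,\Big|\,\cG_n\right] \;=\; \binom{D_n+k-1}{k}\left(1+\frac{k\,\Upsilon_n^{\sss(a)}}{2(n+1)}\right).
\]
Dividing both sides by the $\cG_n$-measurable product $\prod_{t=m}^{n}\bigl(1+k\Upsilon_t^{\sss(a)}/(2(t+1))\bigr)$ and rearranging gives exactly the martingale identity asserted in the statement of Lemma \ref{lem:mart-k}.

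Integrability of each term is immediate: since $\sum_{a'}\tilde Y_{a'}^{\sss(t)} = 1$ and Assumption \ref{ass:finite-case} furnishes $\kappa$ bounded and strictly positive, there is a deterministic $C_0<\infty$ with $\Upsilon_t^{\sss(a)}\le C_0$ for all $t$, and iterating the one-step identity shows $\E\binom{D_n+k-1}{k}$ is finite for every $n$. I do not foresee any serious obstacle: once the Pascal and ratio identities above are in place, the argument is a one-line variant of the proof of Lemma \ref{lem:mart-degree-evolution}, with the only mild sanity check being the baseline $D_m\ge 1$, which is built into the dynamics of $\scrP(1,\mvpi,\kappa)$.
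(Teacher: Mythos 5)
Your proof is correct and is essentially the paper's intended argument: the paper proves the $k=2$ case via the multiplicative identity $\binom{D_{n+1}+1}{2}=\binom{D_n+1}{2}\bigl(1+\tfrac{2\Delta_{n+1}}{D_n}\bigr)$ and states that the general $k$ case "follows identical lines," and your Pascal-plus-ratio decomposition is just the additive form of the same identity $\binom{D_{n+1}+k-1}{k}=\binom{D_n+k-1}{k}\bigl(1+\tfrac{k\Delta_{n+1}}{D_n}\bigr)$, valid since $\Delta_{n+1}\in\{0,1\}$ and $D_n\ge 1$. Combining it with \eqref{eqn:912} and the $\cG_n$-measurability of $\Upsilon_n^{\sss(a)}$ gives exactly the required one-step recursion, and your integrability remark is a correct (if routine) supplement.
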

Now fix $\eps >0$ as in the statement of (b).  Our goal is to show that for any $\delta >0$,
\begin{equation}
\label{eqn:to-show}
\limsup_{n\to \infty}\pr({\cg \fN_n^a}/n^{\frac{\phi_a +  \eps}{2} } > \delta) =0. 
\end{equation}
For the rest of the argument we write ${\cg \fN_n = \fN_n^{a}}$.  It is enough to show, for some fixed $k\geq 1$ and any $\delta >0$, 
\begin{equation}
\label{eqn:enough-to-show}
\cE_{n}(k,\eps,\delta):= \pr\left(\frac{1}{n^{k(\frac{\phi_a +  \eps}{2})}} {{\cg \fN_n}+ k-1 \choose k}  > \delta\right) \xrightarrow{n \rightarrow \infty} 0. 
\end{equation}
Fix $\eps_1< \eps$ and $k\geq 1$ with $k\frac{\phi_a + \eps_1}{2} > 1$. Fix $B \geq 1$ and consider the stopping time 
\[T_B = \inf\set{\ell\geq B: \fN_\ell^{\sss(a)} \geq \phi_a + \eps_1  }.\]
Now note that for any $n$, 
\[{{\cg \fN_n}+ k-1 \choose k} \leq \sum_{v=1}^n {\deg(v, \cG_n)+ k-1 \choose k} \ind\set{a(v) =a}.  \]
Thus using Lemma \ref{lem:mart-k} and the optional sampling theorem, there exist constants $C_1(B) < C_2(B)<  \infty $ such that for $n>B$, 
\begin{align*}
\cE_{n}(k,\eps,\delta) &\leq \pr(T_B \leq n) + \frac{C_1(B)}{\delta n^{k(\frac{\phi_a+\eps}{2})}}\sum_{j=1}^n {\prod_{\ell=j}^n\left(1+\frac{k}{2(\ell+1)} (\phi_a + \eps_1)\right)}\\
&\leq \pr(T_B \leq n) + \frac{C_2(B)}{\delta n^{k(\frac{\phi_a+\eps}{2})}} n^{k(\frac{\phi_a+\eps_1)}{2}} \sum_{j=1}^n \frac{1}{j^{k(\frac{\phi_a+\eps_1)}{2}}} \\ 
&\leq \pr(T_B \leq n) + O\left(\frac{1}{n^{k\frac{(\eps - \eps_1)}{2}}}\right).
\end{align*}
Thus for any $B\geq 1$,
\[\limsup_{n\to \infty}\cE_{n}(k,\eps,\delta) \leq \pr(T_B < \infty).  \]
However again using \eqref{eqn:upst-to-phi}, we get that $\limsup_{B\to\infty}\pr(T_B < \infty) = 0$. This completes the proof of \eqref{eqn:enough-to-show} and thus (b). 
}

\section{Proofs: non-tree regime}
	\label{sec:proofs-extensions}
	We will now prove the local limit theorems for the non-tree network model and its implications discussed in Section \ref{sec:nt}. When clear from context, we will suppress dependence of some quantities on $\mvm$ for notational convenience.
\subsection{Proof of Lemma \ref{lem:minimizernt}:}
From the dynamics of the graph process $\{\cG_n\}_{n \ge 0}$, it can be checked that
\begin{equation}\label{yevol}
\E\left[\tilde Y_a^{\mvm,\sss{(n+1)}} \vert \cG_n\right] = \tilde Y_a^{\mvm,\sss{(n)}} + \frac{1}{n+n_0 + 1}\left[\frac{1}{2}m_a\pi_a + \frac{1}{2}\sum_{b \in [K]} m_b \pi_b \frac{\tilde Y_a^{\mvm,\sss{(n)}}\kappa_{a,b}}{\sum_{l \in [K]}\tilde Y_l^{\mvm,\sss{(n)}}\kappa_{l,b}} - \tilde Y_a^{\mvm,\sss{(n)}}\right].
\end{equation}
In the language of \cite{pemantle2007survey} (see Theorem 2.13 there), the above implies that $\{\tilde Y_a^{\mvm,\sss{(n)}}\}_{n \ge 0}$ evolves according to a stochastic approximation algorithm whose asymptotic pseudotrajectories are governed by flows of the ODE
$$
\dot \vY(t) = -G(\vY(t))
$$
in $\mathbb{R}_+^K$, where $G:\mathbb{R}_+^K \rightarrow \mathbb{R}_+^K$ is given by $G_i(\vy) := y_i \partial_i V_{\mvpi}(\vy)$, $i \in [K]$, and $V_{\mvpi}(\cdot)$ is defined in \eqref{eqn:vpi-defnt}. The function $V^{\mvm}_{\mvpi}(\cdot)$ is strictly convex and diverges to infinity on approaching the boundary of $\mathbb{R}_+^K$. Moreover, the set $S^{\mvm}$ defined in the Lemma is an invariant compact set for the flow $\vY(\cdot)$ (that is, if $\vY(0) \in S^{\mvm}$, then $\vY(t) \in S^{\mvm}$ for all $t \ge 0$). Further, $V^{\mvm}_{\mvpi}(\cdot)$ is a \emph{Lyapunov function} in the sense that $t \rightarrow V_{\mvpi}(\vY(t))$ is non-increasing, and strictly decreasing away from the critical points of $V^{\mvm}_{\mvpi}(\cdot)$ (where $\nabla V^{\mvm}_{\mvpi}=0$). From these observations, we conclude that $V^{\mvm}_{\mvpi}(\cdot)$ has a unique critical point corresponding to a global minimum in $\mathbb{R}_+^K$ that lies in the interior of the set $S^{\mvm}$. The Lemma now follows on appealing to \cite[Proposition 2.18]{pemantle2007survey}.

\subsection{Proof of Theorem \ref{thm:fringe-convergencent}}
The proof is along the same lines as that of Theorem \ref{thm:just-fringe-convg} using stochastic approximation techniques. 

For a vertex $v$ in a graph $G \in \mathbb{G}$, define the \emph{in-component} of $v$ to be the incoming neighborhood of $v$, written as $U_{\infty}(v, G)$, obtained in a similar way as $U_{\infty}(\emptyset)$ in Section \ref{lwcnt}.

For $n \ge 0$ and a given tree $\vt \in \bT_{\cS}$ (viewed as a directed graph with edges pointing from children to parents), define
$$
c_n(\vt) := \sum_{v \in \cG^{\mvm}_n} \ind\set{U_{\infty}(v, \cG^{\mvm}_n) \cong \vt}, \qquad \fP_n(\vt) := \frac{c_n(\vt)}{n+n_0}.
$$
Also recall that for $u \in \vt$, $I_u$ denotes the indicator that the youngest child $y_u$ of $u$ is a leaf in $\vt$ and $\vt^{(u)}$ denotes the tree obtained by deleting this child. We will denote the attribute type and degree of $u \in \vt$ (in-degree + out-degree) respectively by $\rho_u$ and $d_u$ (suppressing dependence on $\vt$ as all calculations below are for fixed $\vt$).

Since multiple edges are added for each new added vertex, controlling the evolution of the number of copies of a given subgraph in the growing network sequence is potentially more involved. The next lemma helps with this by showing that for any given $\vt \in \bT_{\cS}$, with high probability for large $n$, any in-component in $\cG^{\mvm}_n$ that is isomorphic to $\vt$ receives at most one incoming edge from $v_{n+1}$. 

\begin{lemma}\label{nottwo}
Fix $\vt \in \bT_{\cS}$ and write $\{T^n_i(\vt) : 1 \le i \le c_n(t)\}$ for the in-components in $\cG^{\mvm}_n$ isomorphic to $\vt$. Then,
$$
\pr\left(T^n_i(\vt) \text{ receives more than one incoming edge from } v_{n+1} \text{ for some } 1 \le i \le c_n(t) \, \vert \, \cG^{\mvm}_n\right) = O(n^{-1}).
$$
\end{lemma}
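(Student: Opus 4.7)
The plan is a second-moment/birthday-style estimate combined with a union bound. First I would show that any single $T^n_i(\vt)$ receives two or more incoming edges from $v_{n+1}$ with conditional probability $O(n^{-2})$, and then sum this over the $c_n(\vt) \le n+n_0$ isomorphic in-components to obtain the claimed $O(n^{-1})$.

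The crux is to control the total weighted degree of $T^n_i(\vt)$ uniformly in $i$ and $n$. The key observation is a rigidity consequence of the isomorphism: for any $v$ with $U_{\infty}(v, \cG^{\mvm}_n) \cong \vt$ and any $u \in U_{\infty}(v, \cG^{\mvm}_n)$, every child of $u$ in $\cG^{\mvm}_n$ is necessarily a descendant of $v$ and hence already lies inside $U_{\infty}(v, \cG^{\mvm}_n)$. The isomorphism with $\vt$ therefore forces the in-degree of $u$ in $\cG^{\mvm}_n$ to equal its in-degree in $\vt$, which is at most $|\vt|-1$. Combined with the deterministic out-degree $m_{a(u)}$, this gives the uniform bound $\deg(u,n) \le D_\vt := |\vt| + \max_a m_a$, and consequently
$$
\sum_{u \in T^n_i(\vt)} \kappa(a(u), a(v_{n+1}))\deg(u,n) \le C_\vt := |\vt|\,D_\vt \max_{a,b}\kappa(a,b),
$$
uniformly in $i$ and $n$. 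This is the only place where the hypothesis that the in-component is isomorphic to a \emph{fixed} tree is used, and it is the conceptual heart of the argument.

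The remaining steps are routine bookkeeping. Using Assumption \ref{ass:finite-case} with $\underline{\kappa} := \min_{a,b}\kappa(a,b) > 0$ and the fact that $|E(\cG^{\mvm}_n)| = |E(\cG^{\mvm}_0)| + \sum_{i=1}^n m_{a(v_i)}$ grows linearly in $n$, the denominator in \eqref{eqn:912old} is bounded below by $c_1 n$ for some $c_1>0$. Hence each of the $m_{a(v_{n+1})} \le \max_a m_a$ conditionally independent edges emanating from $v_{n+1}$ attaches to $T^n_i(\vt)$ with probability at most $C_\vt/(c_1 n)$, making the probability that two or more of them do so at most $\binom{m_{a(v_{n+1})}}{2}(C_\vt/(c_1 n))^2 = O(n^{-2})$ uniformly in $i$. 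A union bound over the at most $n+n_0$ isomorphic in-components then closes the argument. The main obstacle is precisely the rigidity step: a priori, vertices in $T^n_i(\vt)$ could accumulate unbounded degree (this is a preferential attachment model that produces hubs), but the structural constraint $U_{\infty}(v, \cG^{\mvm}_n) \cong \vt$ traps all incoming edges at any such $u$ inside $\vt$, forcing the uniform bound $D_\vt$. Once this is in hand, everything else is a standard first-moment computation.
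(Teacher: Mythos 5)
Your proposal is correct and follows essentially the same route as the paper: a pairwise union bound giving $O(n^{-2})$ per component (via $\binom{m_j}{2}$ times the square of the single-edge hit probability), the linear-in-$n$ lower bound on the attachment denominator, and a union bound over the at most $n+n_0$ components. The ``rigidity'' observation you highlight --- that the isomorphism $U_{\infty}(v,\cG^{\mvm}_n)\cong\vt$ forces every in-neighbor of a vertex in the component to lie inside the component, hence bounding degrees by a constant $D_{\vt}$ depending only on $\vt$ --- is exactly what the paper uses implicitly through its quantity $d_{\vt}=\max_{v\in\vt}(m_{\rho_v}+d_v)$; you have merely made that step explicit.
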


\begin{proof}
Write $|\vt|$ for the number of vertices in $\vt$ and let
$$
d_{\vt} := \operatorname{max}_{v \in \vt} (m_{\rho_v} + d_v), \quad \kappa_{\vt}^j := \operatorname{max}_{v \in \vt}  \kappa(\rho_v,j), \ j \in [K].
$$
Then,
\begin{align*}
&\pr\left(T^n_i(\vt) \text{ receives more than one incoming edge from } v_{n+1} \text{ for some } 1 \le i \le c_n(t) \, \vert \, \cG^{\mvm}_n\right)\\
&\le c_n(\vt)\sum_{j \in [K]} \pi_j {m_j \choose 2} \left(\frac{|\vt|\cdot d_{\vt}\cdot \kappa_{\vt}^j}{\sum_{v^\prime \in \cG_n^{\mvm} } \kappa(a(v'),a(v_{n+1})) \deg(v^\prime, n)}\right)^2.
\end{align*}
The lemma now follows upon observing that $c_n(\vt) \le n + n_0$ and each term in the sum is $O(n^{-2})$.
\end{proof}

The rest of the proof is very similar to the proof of Theorem \ref{thm:just-fringe-convg} and we provide only the outline. Fix $\vt \in \bT_{\cS}$. Recall $\cY_{\vt}$ and $\Phi_{n,u, y_{u}}$ (with $\tilde{Y}_a^{\sss{(n)}}$ replaced by $\tilde{Y}_{a}^{\mvm,\sss{(n)}}$) from that proof. Define
$$
\Psi_{n,u,a}:=\frac{\kappa(\rho_{u}, a) \pi_{a}}{\sum_{b\in [K]} \kappa(b,a)\tilde{Y}_b^{\mvm,\sss{(n)}}},\quad  u\in \vt, a\in [K].
$$

Then, by the dynamics of the process $\set{\cG_n:n\geq 0}$, and using Lemma \ref{nottwo},
\begin{equation}
\label{eqn:626nt}
	\pr(c_{n+1}(\vt) = c_n(\vt)+1|\cG^{\mvm}_n) = \sum_{u\in \cY_{\vt}} c_n(\vt^{\sss(u)})\left[ \Phi_{n,u, y_{u}} ~\cdot ~ \frac{m_{\rho_{y_u}}(d_u-1)}{2(n+n_0)}\right] + O(n^{-1}),
\end{equation}
and
\begin{equation}
\label{eqn:633nt}
	\pr(c_{n+1}(\vt) = c_n(\vt)-1|\cG^{\mvm}_n) = c_n(\vt) \sum_{u\in \vt} \sum_{a \in [K]} \frac{\Psi_{n,u,a}\cdot d_um_{a}}{2(n+n_0)} + O(n^{-1}). 
\end{equation}
Using \eqref{eqn:626nt} and \eqref{eqn:633nt}, we obtain:
\begin{align*}
	\E(\fP_{n+1}(\vt)|\cG_n) =\frac{1}{n+n_0+1}\bigg[&\left((n+n_0) - \sum_{a \in [K]}\sum_{u\in \vt} \Psi_{n,u,a}\cdot \frac{d_um_a}{2}\right)\fP_n(\vt)\\
	&+ \sum_{u\in \vt} \fP_n(\vt^{\sss(u)})\left( \Phi_{n,u, y_{u}} ~\cdot ~ \frac{m_{\rho_{y_u}}(d_u-1)}{2}\right)I_u \bigg] + O(n^{-2}).
\end{align*}
Rearranging gives, 
\begin{align}
	\E(\fP_{n+1}(\vt)|\cG_n) - \fP_n(\vt) = \frac{1}{n+n_0+1}&\bigg[\sum_{u\in \vt} \fP_n(\vt^{\sss(u)})\left( \Phi_{n,u, y_{u}} ~\cdot ~ \frac{m_{\rho_{y_u}}(d_u-1)}{2}\right)I_u \notag \\
	&-\left(1+\sum_{a \in [K]}\sum_{u\in \vt} \Psi_{n,u,a}\cdot \frac{d_um_a}{2}\right)\fP_n(\vt) \bigg] + O(n^{-2}).
\end{align}
Using the above stochastic approximation form with Lemma \ref{lem:jordan} (now with $R_n = O(n^{-2})$), we conclude that $\fP_n(\vt) \convas \fpm(\vt)$, where $\fpm(\cdot)$ satisfies the recursion
\begin{equation}
		\label{eqn:recur-first-partnt}
			\fpm(\vt) = \frac{\sum_{v\in \vt} I_v  \cdot  \fpm(\vt^{\sss(v)})\frac{d_v-1}{2} \phi^{\mvm}_{\rho_v , \rho_{y_v}}m_{\rho_{y_v}}}{1+\sum_{v\in \vt}\sum_{a \in [K]}m_a \phi^{\mvm}_{\rho_v,a} \frac{d_v}{2}},
		\end{equation}
		with boundary conditions for $\vt = \set{v_0, a(v_0) = b}$, i.e. a tree consisting of a single vertex of type $b\in [K]$, given by 
		\begin{equation}
		\label{eqn:recur-boundarynt}
			\fpm(\vt) = \frac{2\pi_b}{2+ m_b\phi^{\mvm}_{b}}, \qquad b\in [K]. 
		\end{equation}
This recursion has a unique solution satisfied by the law of $\cT^{\mvm}_{A}(\tau)$, which follows exactly as the proof of Proposition \ref{prop:prop-of-fp}. As $\fpm(\cdot)$ is the distribution of the progeny tree of a randomly stopped branching process, it is fully supported on $\bT_{\cS}$, thereby completing the proof of Theorem \ref{thm:fringe-convergencent}. 

\subsection{Proofs of Theorem \ref{thm:deg-nt} and Theorem \ref{thm:page-rank-tailsnt}}
We first prove Theorem \ref{thm:deg-nt}. Fix $a \in [K]$. Define $S^a_0=0$ and $S^a_k := \sum_{l=1}^{k} \frac{E_l}{\phi^{\mvm}_a(l + m_a)}, \, k \in \mathbb{N}$, where $\{E_l\}$ are \emph{i.i.d.} $\Exp(1)$ random variables. Recall $\tau \sim \Exp(2)$. By Theorem \ref{thm:fringe-convergencent} and using standard properties of exponential random variables, we obtain for any $k \ge m_a$,
\begin{align*}
\vp^{\mvm,a}_{\infty}(k) = \pr_a\left(S^a_{k-m_a} \le \tau < S^a_{k-m_a+1}\right) = \frac{2}{2 + \phi^{\mvm}_ak}\prod_{i=m_a}^{k-1}\frac{\phi^{\mvm}_a i}{2 + \phi^{\mvm}_a i},
\end{align*}
where the product is taken to be $1$ if $k=m_a$. Theorem \ref{thm:deg-nt} follows from the above.

To prove Theorem \ref{thm:page-rank-tailsnt}, we proceed as in the proof of Theorem \ref{thm:page-rank-tails}. The almost sure convergence follows from Theorem \ref{thm:fringe-convergencent} and results in \cite{garavaglia2020local,banerjee2021pagerank}.

To study the $\pr_a$-distributional properties of $\cR^{\mvm}_{\emptyset,c}$, observe that it is the root PageRank in $\cT^{\mvm}_a(\tau)$ (with root type $a \in [K]$). Define the percolated branching process $\BP_a^{c,\mvm}$ similarly as in Definition \ref{def:perc-bp} by randomly deleting edges in $\BP_a^{\mvm}$ except that edges emanating from type $b$ vertices in $\BP_a^{\mvm}$ are now retained with probability $c/m_b$. Let $\cZ_a^{\mvm,c}(t) = |\BP_a^{\mvm,c}(t)|, \, t \ge 0$. Then $\cR^{\mvm}_{\emptyset,c} = \cR^{\mvm}_{\emptyset,c}(\tau)$, where
$$
\cR^{\mvm}_{\emptyset,c}(t) = (1-c) \E_a(\cZ_a^{c,\mvm}(t) \vert \BP_a^{\mvm}(t)), \quad t \ge 0.
$$
To prove Theorem \ref{thm:page-rank-tailsnt}(a), note that, in light of the calculations after \eqref{eqn:816}, it suffices to prove the analogue of Proposition \ref{prop:r-zc-moments}. To prove part (a), let $\vh^{\mvm} = (h^{\mvm}_1,\dots, h^{\mvm}_K)$ denote the right eigenvector of $\vM^{\mvm,\sss{(c)}}$ defined in \eqref{eqn:matrix-mc-defnt} corresponding to Perron-Frobenius eigen-value $\lambda^{\mvm}_c$, normalized so that $\sum_{l=1}^K h^{\mvm}_l=1$. Define
\begin{equation*}
		U^{\mvm}(t):=\sum_{b\in [K]} \frac{h^{\mvm}_b}{m_b} Y^{\mvm}_b(t), \qquad Y^{\mvm}_b(t):= \sum_{v\in \BP_a^{\mvm,c}(t), a(v) = b} (\operatorname{in-deg}^{\mvm}(v,t) + m_b ), \qquad t\geq 0. 
	\end{equation*}
where $\operatorname{in-deg}^{\mvm}(v,t)$ is the in-degree (no of children) of $v$ in the full branching process $\BP^{\mvm}_a(t)$. Then, proceeding as in \eqref{eqn:933}-\eqref{eqn:1016}, it follows that $\{e^{-\lambda^{\mvm}_c t}U^{\mvm}(t) : t \ge 0\}$ is a positive martingale whose moments are uniformly bounded in time, implying part (a).

To prove part (b), it suffices to prove the analogue of Lemma \ref{lem:lower-exp-zc}. Denote by $\mu^{\mvm,c}(a,b; \cdot)$ the intensity measure of the point process encoding reproduction times of type $b$ children by a type $a$ parent in the branching process $\BP_a^{c,\mvm}$. Recall that the proof of Lemma \ref{lem:lower-exp-zc} relies on expressing the intensity measure in terms of a Markov renewal process $\bX = \{(X_n, T_n) : n \ge 0\}$: 
$$
\mu^{*,\mvm,c}(a,b; \cdot) := e^{-\lambda^{\mvm}_c t}\mu^{\mvm,c}(a,b; \cdot) \frac{h^{\mvm}_b}{h^{\mvm}_a} = \pr^{\bX,\mvm}_{a,0}(X_1 =b, T_1 \in dt)
$$
In this case, the $X$ and $T$ processes are independent of each other, the transition kernel of the former process is given by 
$$
p^{\mvm}_{ab} := \frac{c\phi^{\mvm}_{a,b} m_ah^{\mvm}_b}{(\lambda^{\mvm}_c - \phi^{\mvm}_a)m_b h^{\mvm}_a}, \quad a,b \in [K],
$$
and $T_i - T_{i-1}, \, i \ge 1,$ are \emph{i.i.d.} $\Exp(\lambda^{\mvm}_c - \phi^{\mvm}_a)$. Now, verifying the irreducibility and recurrence of the $X$ process and the positive recurrence of the Markov renewal process, and appealing to the results of \cite{nummelin1978uniform}, Lemma \ref{lem:lower-exp-zc} follows. This completes the proof of Theorem \ref{thm:page-rank-tailsnt}(a). Theorem \ref{thm:page-rank-tailsnt}(b) follows upon using the explicit form of the transition kernel $(p^{\mvm}_{lk})$ in place of $(p_{lk})$, and $\lambda^{\mvm}_c$ in place of $\lambda_c$, in the proof of Theorem \ref{thm:page-rank-tails}(b).

\section{Proofs: Network sampling}
\label{sec:net-sampling}
\subsection{Proof of Theorem \ref{thm:network-sampling}} By Remark \ref{rem:904}, we need to only prove parts (c)-(e). {\cg Recall that $\indNS$ and $\prNS$ represent sampling proportional to in-degree and proportional to PageRank respectively. }

\begin{enumeratea}
	\item[{\bf Proof of (c):}] Note that 
	\[\pr_{\indNS}(a({\cg U_n}) = b|\cG_n) = \frac{\sum_{{v\in \cG_n} } \deg(v, \cG_n)\ind\set{a(v) = b} - \#\set{v\in \cG_n, a(v) = b}}{n} + o_{\pr}(1), \]
	where the final term accounts for the adjustment at the root (whose out-degree is $0$). 
	Using the dynamics of the model class $\scrP$ and \eqref{eqn:8522} proved in \cite{jordan2013geometric} gives 
	\[\pr_{\indNS}(a({\cg U_n}) = b|\cG_n) \to 2\eta_b - \pi_b = \eta_b\left(2-\frac{\pi_b}{\eta_b}\right) = \eta_b \phi_b = \pi_b \frac{\phi_b}{2-\phi_b},\]
	where the final two equalities follow from \eqref{eqn:phi-def}. To prove the final assertion, note that 
	\[\pi_b \Psi_b \E^{\vS}_b\left[\frac{1}{\Psi_{S_1}}\right] = \pi_b \Psi_b \sum_{a\in [K]} \frac{\phi_{ba}}{2-\phi_b} \cdot \frac{\Psi_a}{\Psi_b}\cdot \frac{1}{\Psi_a} =\pi_b \frac{\phi_b}{2-\phi_b}. \]
	\item[{\bf Proof of (d):}] First note that by Theorem \ref{thm:page-rank-tails}(c),
	\[\pr_{\prNS}(a({\cg U_n}) = b|\cG_n) = \frac{\sum_{v\in \cG_n} R_{v,c}(n) \ind\set{a(v) =b}}{\sum_{v\in \cG_n} R_{v,c}(n)} \convas \pi_b \E_b(\cR_{\emptyset, c})\]
	Now using Proposition \ref{prop:mc-descp} for the equivalent description of the expected limiting PageRank completes the proof. 
	\item[{\bf Proof of (e):}] This follows in an identical fashion to the proof of (d). The proof is omitted.    
\end{enumeratea}

\subsection{Proof of Theorem \ref{thm:rare-sampling}}
{\cg The plan is to apply Theorem \ref{thm:network-sampling} in the specific context of rare minorities,  with the initial goal to derive explicit expressions for the various functionals in this Theorem in the setting where the parameters satisfy the scaling given in \eqref{eqn:rare-model} and \eqref{eqn:thet-scaling}.} Recall the function $V_{\mvpi}$ in \eqref{eqn:vpi-def}. In this case, we want to find $\mveta = (\eta, 1-\eta)$ that minimizes the univariate function
\[V_{\mvpi}(y) = 1-\frac{1}{2(1+\theta)}\left[\theta\log(y) +\theta\log(y+a(1-y)) + \log(1-y)\right], \qquad 0\leq y \leq 1.\]
Solving this minimization problem gives, 
\[\eta = \frac{2\theta - a-3\theta a + \sqrt{(2\theta-a-3\theta a)^2 + 4\theta a(1-a)(1+2\theta)}}{2(1-a)(1+2\theta)}.\]
In the regime $a\downarrow 0$ with $\theta$ satisfying the scaling in \eqref{eqn:thet-scaling}, it is easy to verify using Taylor approximations of various quantities appearing in the formula above that,
\begin{equation}
	\label{eqn:capl-222}
	\eta = 2D\sqrt{a} -(4D^2 +\frac{1}{2})a + O(a^{3/2}).
\end{equation}
Plugging this estimate into the expressions for the functionals $\phi_{\cdot, \cdot}$ and $\phi_{\cdot}$ in \eqref{eqn:phi-def} gives, 
\begin{align*}
	\phi_{11}&= \frac{\theta}{(1+\theta)(\eta+a(1-\eta))} = \frac{1}{2}+ \left(D-\frac{1}{4D}\right)\frac{\sqrt{a}}{2} + O(a). \\
	\phi_{12} = \phi_{22} &=\frac{1}{1+\theta} = 1-D\sqrt{a} + D^2 a + O(a^{3/2}). \\
	\phi_{21} &= \frac{a\theta}{(1+\theta)(\eta+a(1-\eta))} = \frac{a}{2}+ O(a^{3/2}). 
\end{align*}
Thus in this case, the $\vM$ matrix in \eqref{eqn:matrix-def} is given by, 
\[\vM = \begin{pmatrix}
	1-\frac{\sqrt{a}}{2D} + O(a)& 2(1-(2D+\frac{1}{4D})\sqrt{a}) + O(a) \\
	\frac{a}{2} + O(a^{3/2}) & 1-2D\sqrt{a} +(4D^2 +\frac{1}{2})a + O(a^{3/2}) 
\end{pmatrix}. \]
The eigen-vector $\mvPsi$ required in Proposition \ref{prop:mc-descp} is given by $\mvPsi = \frac{1+\theta}{1+\psi \theta}(\psi,1)$, where 
\[\psi = \frac{(2D - \frac{1}{2D})\sqrt{a} + O(a) + \sqrt{[(2D-\frac{1}{2D})\sqrt{a} + O(a)]^2 + 4a + O(a^{3/2})}}{a+O(a^{3/2})}. \]
In the $a\downarrow 0$ regime with fixed $D>0$, it can be checked that 
\[\psi = \left[(2D - \frac{1}{2D}) + \sqrt{((2D-\frac{1}{2D})^2 + 4)}\right]\cdot \frac{1}{\sqrt{a}} + O(1). \]
Now we can complete the proof by using these expressions in Theorem \ref{thm:network-sampling}. Parts(a) and (b) following from the scaling of $\pi_{1}$ and $\eta_1 = \eta$ above in the $a\downarrow 0$ regime. Part(c) follows from noting that 
\begin{align*}
	\eta_1 \phi_1 &= (2D\sqrt{a} - (4D^2 + \frac{1}{2})a + O(a^{3/2}))(\frac{3}{2} - (\frac{D}{2} +\frac{1}{8D})\sqrt{a} + D^2 a + O(a^{3/2})) \\
	&=3D\sqrt{a} + O(a). 
\end{align*}
Finally (d) follows from noting that 
\[\pi_1 \Psi_1 = \frac{\theta}{(1+\theta)} \frac{(1+\theta)}{1+\psi \theta}\psi = \frac{\psi \theta}{1+\psi \theta}\]
and thus using the estimates above, 
\[\psi \theta = \left[(2D - \frac{1}{2D}) + \sqrt{((2D-\frac{1}{2D})^2 + 4)}\right]\cdot \frac{1}{\sqrt{a}} \cdot D\sqrt{a} + O(\sqrt{a}). \]
Cancelling the $\sqrt{a}$ term in the numerator and denominator in the first term results in the bound in (d) and completes the proof.

\section*{Acknowledgements}
S. Banerjee is partially supported by the NSF CAREER award DMS-2141621. S. Bhamidi and V. Pipiras are partially supported by NSF DMS-2113662. S. Banerjee, S. Bhamidi and V.Pipiras are partially supported by NSF RTG grant DMS-2134107. S. Bhamidi was partially supported by DMS-2413928, and
DMS-2434559. Part of this material is based upon work supported by the National Science
Foundation under Grant No. DMS-1928930, while S. Banerjee and S. Bhamidi were in
residence at the Simons Laufer Mathematical Sciences Institute in
Berkeley, California, during the Spring 2025 semester. {\cg We thank two anonymous referees for detailed evaluation of the original submission which resulted in significant improvement of the paper. }

\bibliographystyle{imsart-number}
\bibliography{ref,CTBP,pdn,persistence}

\end{document}